\numberwithin{equation}{section}
\newtheorem{thm}{Theorem}
\numberwithin{thm}{section}
\newtheorem{prop}[thm]{Proposition}
\newtheorem{lem}[thm]{Lemma}
\newtheorem{cor}[thm]{Corollary}
\theoremstyle{definition}
\newtheorem{defn}[thm]{Definition}
\theoremstyle{remark}
\newtheorem{rem}[thm]{Remark}
\newcounter{rmcount}\renewcommand{\thermcount}{{\rm\roman{rmcount}}}
\newenvironment{rmlist}{%
\begin{list}{{\rm(\thermcount)}}{\setlength{\labelwidth}{\leftmargin}%
\usecounter{rmcount}}}{\end{list}}
\newcounter{Rmcount}\renewcommand{\theRmcount}{{\rm\Roman{Rmcount}}}
\newenvironment{Rmlist}{%
\begin{list}{{\rm(\theRmcount)}}{\setlength{\labelwidth}{\leftmargin}%
\usecounter{Rmcount}}}{\end{list}}
\newcommand{\R}{\mathbb{R}} \newcommand{\N}{\mathbb{N}}              
\newcommand{\Z}{\mathbb{Z}} \newcommand{\C}{\mathbb{C}}      
\newcommand{\Rn}{{\R}^n}
\newcommand{\Rnl}{{\R}^{n-1}}
\providecommand{\cal}[1]{\mathcal{#1}}
\newcommand{\cs}{{\cal S}}              
\newcommand{\cd}{{\cal D}}              
\newcommand{\cf}{{\cal F}}
\newcommand{\cfi}{{\cal F}^{-1}}
\newcommand{\dual}[2]{\langle\,#1,\,#2\,\rangle}
\newcommand{\im}{\operatorname{i}}
\newcommand{\bspq}{B^{s,\vec a}_{\vec p,q}\,}           %         Bspq
\newcommand{\fspq}{F^{s,\vec a}_{\vec p,q}\,}           %         Fspq
\newcommand{\hsp}{{H}^{s,\vec{a}}_{\vec{p}}\,}
\newcommand{\wmp}{{W}^{\vec{m}}_{\vec{p}}\,}
\newcommand{\lpvec}{{L_{\vec{p}}\,}}
\newcommand{\supp}{\operatorname{supp}}
\newcommand{\op}[1]{\operatorname{#1}}
\newcommand{\imb}{\hookrightarrow}
\newcommand{\be}{\begin{equation}}
\newcommand{\ee}{\end{equation}}
\newcommand{\nrm}[2]{\|#1\|_{#2}}                       % with subscript
\newcommand{\norm}[2]{\mathinner{\|}#1\,|#2\|}
\newcommand{\Norm}[2]{\mathinner{\bigl\|\,#1\,\big|#2\bigr\|}}
\title[On traces and mixed norms]
{On the trace problem for Lizorkin--Triebel spaces\\ with
mixed norms} 
\author[J.~Johnsen]{Jon Johnsen}
\address{\small Department of Mathematical Sciences,
 Aalborg University,
 Fredrik~Bajers Vej 7G,
 DK--9220 Aalborg East, Denmark }
\email{jjohnsen@math.aau.dk}
\author[W.~Sickel]{Winfried Sickel}
\address{\small Institute of Mathematics,
 Friedrich-Schiller-University Jena,
 Ernst-Abbe-Platz 1--2,
 D--07743 Jena, Germany} 
\email{sickel@minet.uni-jena.de}
\keywords{Traces of Sobolev spaces, Besov and Lizorkin--Triebel spaces, 
anisotropic spaces, mixed norms, maximal regularity%
\\[2\jot] {\tt Article appeared in 
Mathematische Nachrichten~{\bf 281\/}, no.~5 (2008), pp.~669--696.}
} 
\subjclass[2000]{46E35}
\begin{document}
 \begin{abstract}
  The subject is traces of
Sobolev spaces with mixed Lebesgue norms on Euclidean space. Specifically,
restrictions to the hyperplanes given by $x_1=0$ and 
$x_n=0$ are applied to functions
belonging to quasi-homogeneous, mixed-norm Lizorkin--Triebel
spaces $F^{s,\vec a}_{\vec p,q}$;
Sobolev spaces are obtained from these as special cases. 
Spaces admitting traces in the distribution sense
are characterised up to the borderline cases;
these are also covered in case $x_1=0$. For $x_1$ the trace spaces are proved
to be mixed-norm Lizorkin--Triebel spaces with a specific sum exponent; 
for $x_n$ they are
similarly defined Besov spaces.
The treatment includes continuous right-inverses and higher order traces.
The results rely on a sequence version of Nikol$'$skij's inequality,
Marschall's inequality for pseudodifferential operators (and Fourier
multiplier assertions), as well as dyadic ball criteria.
 \end{abstract}

\maketitle
%
%%%%%%%%%%%%%%%%%%%%%%%%%%%%%%%%%%%%%%%%%%%%%
%% Section 1
%%%%%%%%%%%%%%%%%%%%%%%%%%%%%%%%%%%%%%%%%%%%%
\section{Introduction}
  \label{intr-sect}
\enlargethispage{\baselineskip}
\thispagestyle{empty}
The motivation for this paper comes from parabolic 
boundary problems. 
To settle ideas we consider a simple problem,
say for a domain $\Omega\subset\Rn$ with $C^\infty$
boundary $\Gamma:=\partial\Omega$, and
with $\Delta=\partial^2_1+\dots+\partial^2_n$ denoting the Laplacian,
\begin{align}
 \partial_t u -\Delta u &= f \quad\text{in $\Omega\times \,]0,T[\,$},
\\
 u_{|\Gamma}&=\varphi\quad\text{on $\Gamma\times\,]0,T[\,$},
  \label{uGamma-eq}
\\
 u_{|t=0}&=u_0 \quad\text{at $\Omega\times\{0\}$}.
\end{align}
Among the data, 
$f(x,t)$ may have different integrability properties with respect to the 
$x$- and $t$-directions. E.g.\ there may be given $p_1\ne p_2$ in 
$[1,\infty]$ such that
\begin{equation}
  \big(\int_0^T (
  \int_{\Omega} |f(x,t)|^{p_1}\,dx
  )^{p_2/p_1} \,dt \big)^{1/p_2}
  <\infty.
\end{equation}
(It is throughout understood that an $L_\infty$-norm applies whenever $p_j=\infty$.)

Correspondingly, any solution $u(x,t)$ is expected to belong to this 
$L_{\vec p}$ space, $\vec p=(p_1,p_2)$, at least if $\varphi=0$ and $u_0=0$. 
It is well known that this can have various interpretations such as
a bounded kinetic energy of the associated physical system for 
$\vec p=(2,\infty)$.
When $Q_T=\Omega\times\,]0;T[\,$, a more precise information on $u$ will be
that 
\begin{equation}
  u,\ \partial_t u,\ \partial^2_{x_1}u,\dots,\partial^2_{x_n}u
  \in L_{\vec p}(Q_T).
\end{equation}
The set of such $u$ is denoted $W^{2,1}_{\vec p}(Q_T)$.
That in this case 
$u\in W^{2,1}_{\vec p}(Q_T)$ is a result of the maximal regularity
theory, that has been intensively studied since the 1980s; 
the reader may consult \cite[Ch.~III,4.10]{Am95} as a reference to this
development.

In case $\varphi\ne0$ and $u_0\ne0$,
a natural question is of course 
in which spaces it is possible to prescribe $\varphi$ and $u_0$, such that 
$u\in  W^{2,1}_{\vec p}(Q_T)$ still holds.
Even for the above heat problem, 
the answer is somewhat delicate for $p_1\ne p_2$.

This investigation was seemingly begun by Weidemaier \cite{Wei98,Wei02,Wei05}, 
but other works have been devoted to this area, 
cf.\ the paper by Denk, Hieber and Pr{\"u}ss \cite{DeHiPr}.

To give a brief account of what can be expected, 
let $\gamma_0$ denote the operator of restriction to the lateral surface, 
so that the boundary condition \eqref{uGamma-eq} 
may be written $\gamma_0 u=\varphi$,
and let $r_0$ stand for the restriction to the initial surface at
$t=0$ (i.e.\ $r_0 u=u_0$). 

However, we simplify by taking the flat case in which 
$\Omega=\Rn$ and $t\in\R$.
The initial data $u_0$ should then be given in the Besov space
$B^{2-2/p_2}_{p_1,p_2}(\Rn)$, as $r_0$ is a surjection
\begin{equation}
  r_0\colon W^{2,1}_{\vec p} (\Rn\times\R)\to B^{2-2/p_2}_{p_1,p_2}(\Rn).
\end{equation}

For $\varphi$ the situation is different, for if 
$\Rnl_x\times\R_t$ is equipped with mixed-norm spaces
$L_{p'}(\Rnl_x\times\R_t)$ for $p'=(p_1,\dots,p_1,p_2)$ 
($n-1$ copies of $p_1$),
$\gamma_0$ is a surjection
\begin{equation}
  \gamma_0\colon W^{2,1}_{\vec p} (\Rn\times\R)
      \to F^{2-1/p_1,a'}_{p',p_1}(\Rnl_x\times\R_t).
  \label{WF-eq}
\end{equation}
Here the range space is a Lizorkin--Triebel space with mixed norms
(due to $p'$) and with its sum exponent equal to $p_1$ 
(so in general this is not a Besov space).
In addition the space has an anisotropy related to the 
smoothness index $s$; this is handled via weights $a_j$
assigned to each coordinate axis, so that $a'=(1,\dots,1,2)$. 
The resulting quasi-homogeneity of the space
is well known, so the exact definitions are given
in Section~\ref{mixd-sect} below.

\bigskip

Motivated by the above outline, we shall study the general trace problem for 
the quasi-homogeneous, mixed-norm Lizorkin--Triebel spaces $\fspq(\Rn)$.
This problem was first studied by Berkolaiko \cite{Ber84,Ber85,Ber87I,Ber87II}.
The fact that $\gamma_0$ has a Lizorkin--Triebel space
as the range was discovered by him for spaces with
$1<p_k<\infty$ for all $k$, $1<q<\infty$. 

Like Berkolaiko, our 
point of departure is a Littlewood--Paley decomposition of the functions,
$u=\sum u_j$,
but this we combine with a rather straightforward 
$L_\infty$--$L_{\vec p}\,$-estimate, using 
%the
maximal functions $u^*_j$ of Peetre--Fefferman--Stein type. More precisely,
if $\vec p=(p_1,p'')$,
\begin{equation}
  \sup_{z\in \R}\Norm{(\sum_{j=0}^{\infty} 
                2^{j(s-\frac{a_1}{p_1})p_1}
                |u_j(z,\cdot)|^{p_1})^{\frac1{p_1}}}{L_{p''}(\Rnl)}
  \le
  c\Norm{ \sup_{j=0,1,2,\dots} 2^{sj} |u^*_j(\cdot)|}{L_{\vec p}(\Rn)}.
\end{equation}
The expression to the right is estimated by $\nrm{u}{}$ in $\fspq$,
so most of the conclusions can be drawn from this $L_\infty$--$L_p$-estimate.
With this method, there are extensions to arbitrary
$p_k\in\,]0,\infty[\,$, for all $k$, $0<q\le\infty$. In particular we 
settle the cases when $p_k=1$ for one or more $k=1,\dots,n$, 
which the previous works on the subject 
\cite{Ber84,Ber85,Ber87I,Ber87II,DeHiPr,Wei05}
were unable to cover.

Moreover, the trace of $\fspq(\Rn)$ is treated for all $s$ above a certain
limit. The isotropic condition $s>\frac1p$ is for mixed norms replaced by 
$s>\frac1{p_k}$ for the trace at $x_k=0$, when all $p_j\in \,]1,\infty[\,$.
As a minor novelty a shift of the borderline is necessary if
$0<p_j<1$ holds for one the tangential variables $x_j$. This is evident from
\eqref{spq-cnd} in Theorem~\ref{main1-thm} and Figure~\ref{borderline_1-fig} 
below.

\bigskip

The paper is organised as follows:
In 
Section~\ref{main-sect}   
our results on the trace problems are presented.
The definition of $\fspq$ is recalled in
Section~\ref{mixd-sect}, together with the properties needed 
for the spaces. In the definition we follow Triebel's book \cite{T2}, 
though the 
conventions for the quasi-homogeneity given by $\vec a$ are the same as 
in \cite{Y1} (and as in our joint work with Farkas on 
the unmixed cases \cite{FaJoSi00}); 
mixed norms are treated as in works of
Schmeisser, Schmeisser and Triebel \cite{Sch84,ScTr87}, 
but here we also draw on a joint work \cite{JoSi}
proving a crucial Nikol$'$skij inequality for vector-valued functions. 
In addition
dyadic corona and ball criteria for the $\fspq$ are established
in the applicable style known at least since \cite{Y1}; 
a pointwise estimate of pseudo-differential operators is also shown,
inspired by a work of Marschall \cite{Mar96}.
Section~\ref{prfs-sect} 
then proceeds to give the proofs, using maximal functions 
(based on an estimate of Bagby \cite{Bag75});
Section~\ref{finl-sect} contains a few final remarks.

%%%%%%%%%%%%%%%%%%%%%%%%%%%%%%%%%%%%%%%%%%%%%
%% Section 2
%%%%%%%%%%%%%%%%%%%%%%%%%%%%%%%%%%%%%%%%%%%%%
\section{Traces of quasi-homogeneous mixed-norm Lizorkin--Triebel spaces}
  \label{main-sect}

\subsection{The main theorems}
In the following vectors $\vec x=(x_1,\dots,x_n)$ in $\Rn$ may be split in 
groups like $\vec x=(x',x_k,x'')$. E.g.\ when restriction to
the hyperplane
$\Gamma_k$ given by $x_k=0$ is considered, $x'=(x_1,\dots,x_{k-1})$ and 
$x''=(x_{k+1},\dots,x_n)$ will be convenient;
because $x'$ and $x''$ both indicate tuples, vector arrows are suppressed.
These conventions are also used for $\vec a$ and $\vec p$.

In general one can define many standard traces, say for $f\in C^\infty(\Rn)$,
\begin{equation}
  \gamma_{j,k} f(x',x'')=\frac{\partial^j f}{\partial
x_k^j}(x',x_k,x'')\bigm|_{x_k=0}.
\end{equation}
Here we shall mainly treat $\gamma_{0,k}$ 
for $k=1$ and $k=n$. 
However, for general $f$, the operator $\gamma_{0,k}$ should be understood
as the distributional trace defined in the natural way as
$\gamma_{0,k}f=f|_{x_k=0}$ when $f$ in its dependence of $x_k$
defines a continuous map from $\R$ to $\cd'(\Rnl)$; that is, $\gamma_{0,k}$ 
is defined for $f$ in the subspace
\begin{equation}
    C(\R_{x_k},\cd'(\Rnl))\subset\cd'(\Rn).
\end{equation}
Here we recall that any $g\in C(\R_{x_k},\cd'(\Rnl))$ defines a distribution
$\Lambda_g$ in $n$ variables, with its action on arbitrary
$\varphi\in C^\infty_0(\Rn)$ given by integration of the continuous function
$x_k\mapsto \dual{g(x_k)}{\varphi(\cdot,x_k,\cdot)}$; more precisely,
$\dual{\Lambda_g}{\varphi}=
\int_\R \dual{g(x_k)}{\varphi(\cdot,x_k,\cdot)}\,dx_k$.
For topological vector spaces $X$, $Y$, the set of continuous
bounded maps $f\colon X\to Y$ is denoted by $C_{\op{b}}(X,Y)$.

All mapping properties of $\gamma_{0,k}$ are meant as restrictions,
for example
$\gamma_{0,k}\colon X\to Y$ means that for the distributional trace,
$X$ is contained in the preimage $\gamma_{0,k}^{-1}(Y)$.

Similarly
$\gamma_{j,k}f$ is defined for $f\in \cd'(\Rn)$ when the
distributional derivative
$\partial^j_{x_k}f$ is in $C(\R_{x_k},\cd'(\Rnl))$.

\bigskip

As our first main result, we determine the 
$\fspq$ that belong to the domain of the trace in the inner variable:

\begin{thm}   \label{main1-thm}
For a given 
anisotropy $\vec a=(a_1,\dots,a_n)\in \,]0,\infty[\,^n$,
let $\vec p\in \,]0,\infty[\,^n$
while 
$0<q\le\infty$ and $s\in \R$.
For the trace $\gamma_{0,1}$ on the hyperplane 
$\{x_1=0 \}$ the following properties of a triple
$(s,\vec p, q)$ are equivalent: 
\begin{rmlist}
  \item
  \label{spq-cnd}
 $(s,\vec p,q)$ satisfies the inequality 
\begin{equation}
  s\ge\frac{a_1}{p_1}+\sum_{k>1}(\frac{a_k}{p_k}-a_k)_+,
  \label{spq1-eq}
\end{equation}
and, in addition, $s=\frac{a_1}{p_1}$ only holds if also $p_1\le1$;
  \item
  \label{cd'-cnd}
    the operator $\gamma_{0,1}$ is continuous from $\fspq(\Rn)$ to 
    $\cd'(\Rnl)$.
\end{rmlist}
In the affirmative case there is a continuous embedding 
$\fspq(\Rn)\imb C_{\op{b}}(\R_{x_1},L_{r''}(\Rnl))$, with the integral
exponents given by $r_k=\max(1,p_k)$ for $k=2,\dots,n$.
\end{thm}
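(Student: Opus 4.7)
The plan is to reduce (i)$\Rightarrow$(ii) to the stronger embedding $\fspq(\Rn)\hookrightarrow C_{\op{b}}(\R_{x_1},L_{r''}(\Rnl))$ claimed at the end of the theorem, since $L_{r''}(\Rnl)\hookrightarrow\cd'(\Rnl)$ yields (ii) as a direct consequence. The converse (ii)$\Rightarrow$(i) I would handle by contraposition: for any triple $(s,\vec p,q)$ failing \eqref{spq1-eq}, exhibit an element of $\fspq(\Rn)$ whose restriction to $\{x_1=0\}$ is undefined even as an element of $\cd'(\Rnl)$.

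For the forward embedding, fix $u\in\fspq(\Rn)$ and apply the anisotropic Littlewood--Paley decomposition $u=\sum_{j\ge0}u_j$, with each $u_j$ band-limited to an anisotropic corona of size $\sim 2^j$. The target is
\be
   \sum_{j\ge0}\sup_{x_1\in\R}\Norm{u_j(x_1,\cdot)}{L_{r''}(\Rnl)}
   \le c\,\Norm{u}{\fspq(\Rn)},
\ee
which delivers absolute, hence uniform, convergence in $C_{\op{b}}(\R_{x_1},L_{r''}(\Rnl))$. For each block $u_j$ I would proceed in three Nikol$'$skij-type steps: (a)~trade $\sup_{x_1}$ for a factor $2^{ja_1/p_1}$ together with an $L_{p_1}(\R_{x_1})$-norm, realised through a Peetre--Fefferman--Stein maximal function $u_j^*$; (b)~for each tangential coordinate $x_k$ with $p_k<1$, apply an anisotropic Nikol$'$skij embedding $L_{p_k}\hookrightarrow L_1\hookrightarrow L_{r_k}$ at the band-limited level, paying a factor $2^{j(a_k/p_k-a_k)}$; (c)~combine and take the mixed $L_{\vec p}$-norm, reaching precisely the $L_\infty$--$L_{\vec p}$ inequality quoted in the introduction. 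Its right-hand side is controlled by $\Norm{u}{\fspq(\Rn)}$ via the embedding $\fspq\hookrightarrow F^{s,\vec a}_{\vec p,\infty}$ and the Peetre maximal-function characterisation of that space. When \eqref{spq1-eq} holds strictly, the resulting series of dyadic factors converges geometrically; in the borderline $s=a_1/p_1$ with $p_1\le1$, I would invoke the elementary inequality $\sum_ja_j\le\bigl(\sum_ja_j^{p_1}\bigr)^{1/p_1}$, valid for $a_j\ge0$ and $p_1\le1$, to convert the $\ell^{p_1}$-norm delivered by the quoted inequality into the required $\ell^1$-summation.

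For the converse, a lacunary construction $u=\sum_jc_j\psi_j$, with each $\psi_j$ Fourier-supported in the $j$th anisotropic corona and appropriately placed in frequency space, should yield $u\in\fspq(\Rn)$ whose would-be trace diverges in every reasonable topology when \eqref{spq1-eq} fails strictly; the remaining borderline $s=a_1/p_1$ with $p_1>1$ is ruled out by a closely related example producing a logarithmic blow-up. The main obstacle is, in my view, the borderline $s=a_1/p_1$ with $p_1\le1$: geometric decay is unavailable there, so one must exploit the precise $\ell^{p_1}$-structure of the introductory inequality together with the vector-valued Nikol$'$skij result of~\cite{JoSi}. A subsidiary technical point is upgrading the uniform $L_{r''}$-bound on partial sums to continuity of $x_1\mapsto u(x_1,\cdot)$, which I would obtain by noting that each band-limited $u_j$ is smooth and thus defines a continuous map $\R\to L_{r''}(\Rnl)$, then passing to the limit using the summability just proved.
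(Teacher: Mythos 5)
There is a genuine gap in the forward direction. Your target inequality
\begin{equation*}
  \sum_{j\ge0}\sup_{x_1\in\R}\bigl\|u_j(x_1,\cdot)\bigr\|_{L_{r''}(\Rnl)}\le c\,\|u\|_{F^{s,\vec a}_{\vec p,q}}
\end{equation*}
is strictly stronger than what is available and cannot hold at the borderline. A block-by-block estimate (maximal function in $x_1$, Nikol$'$skij in $x''$, and $F^{s,\vec a}_{\vec p,q}\hookrightarrow F^{s,\vec a}_{\vec p,\infty}$) yields only $\sup_z\|u_j(z,\cdot)\|_{L_{r''}}\le c\,2^{j(\frac{a_1}{p_1}+\sigma-s)}\|u\|_{F^{s,\vec a}_{\vec p,q}}$ with $\sigma=\sum_{k>1}(\frac{a_k}{p_k}-a_k)_+$, which has no decay precisely when $s=\frac{a_1}{p_1}+\sigma$. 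Your remedy, invoking $\sum_ja_j\le(\sum_ja_j^{p_1})^{1/p_1}$ for $p_1\le1$, does not bridge this: the $\ell_{p_1}$-quasinorm in the introduction's $L_\infty$--$L_{\vec p}$ inequality sits \emph{inside} the tangential $L_{p''}$-norm and under a $\sup_z$, so applied pointwise it only produces $\sup_z\|\sum_j|u_j(z,\cdot)|\,\|_{L_{r''}}\le c\,\|u\|_{F^{s,\vec a}_{\vec p,q}}$, with the dyadic sum still inside the norm. Interchanging $\sup_z$ with $\sum_j$ and moving the $\ell_{p_1}$-sum past $L_{p''}$ are separate, non-free steps, and would amount to a Besov-type bound with sum-exponent one that the trace space $F^{s-a_1/p_1,a''}_{p'',p_1}$ does not afford. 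Note also that the theorem's borderline includes $\sigma>0$ with $p_1>1$, where $\ell_{p_1}\hookrightarrow\ell_1$ is simply false.

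What makes the paper's argument work is a device you do not mention: for fixed $z$, the intervals $[z+2^{-ja_1},z+2^{(1-j)a_1}]$ on which the maximal function controls $|u_j(z,x'')|$ are pairwise disjoint over $j$, so the $\ell_{p_1}$-sum over $j$ assembles into a single $x_1$-integral (this is how \eqref{basic-ineq} is obtained), and restricting to $j>N$ confines that integral to $\,]z,z+2^{-Na_1}]$. Hence $\|\sum_{j>N}u_j(z,\cdot)\|_{L_{r''}}\to0$ uniformly in $z$ by dominated convergence, which delivers $L_{r''}$-convergence \emph{without} absolute summability. Your continuity step (smoothness of blocks plus uniform absolute convergence) therefore must also be replaced; the paper inserts $\tau_hu-u$ into the uniform estimate and uses norm-continuity of translations in $F^{s,\vec a}_{\vec p,q}$ for $q<\infty$ (Proposition~\ref{FBtrans-prop}). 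Finally, your sketch of the converse via lacunary examples is in the right spirit, but Lemma~\ref{spq-lem} shows the borderline $s=a_1/p_1$, $p_1>1$ is excluded by a quantitative computation on the averaged tensor products $v_j=\frac1j\sum_{l=j+1}^{2j}w_l$, whose norms come out exactly as $c\,j^{1/q-1}$ and $c\,j^{1/p_m-1}$; a generic lacunary placement does not by itself yield that precision.
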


The co-domain $\cd'$ above is of course not optimal. Indeed, it is a main point
for $\gamma_{0,1}$ that
the range space is a \emph{Lizorkin--Triebel} space; cf.~\eqref{WF-eq}.
This result is established here under the condition that
\begin{equation}
  s>\frac{a_1}{p_1}+\sum_{k\ge2} (\frac{a_k}{\min(1,p_2,\dots,p_k,q)}-a_k).
  \label{spq-eq}
\end{equation}
This is stronger than the sharp inequality in
\eqref{spq-cnd}, but e.g.\ when $q$, $p_k\ge1$ for all $k>1$ 
it gives the same borderline as \eqref{spq-cnd}; in general it does so 
if $q\ge p_1\ge\dots\ge p_n$.

\begin{thm}   \label{main1-thm'}
Let $\vec p\in \,]0,\infty[\,^n$, $0<q\le\infty$ and $\vec a\in\,]0,\infty[\,^n$.
When $(s,\vec p,q)$ fulfils \eqref{spq-eq},
then $\gamma_{0,1}$ is a bounded \emph{surjection} 
$\fspq(\Rn)\to F^{s-\frac{a_1}{p_1},a''}_{p'',p_1}(\Rnl)$.
\end{thm}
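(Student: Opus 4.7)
The plan is to prove boundedness and surjectivity separately, the latter via a concrete right inverse. Boundedness follows at once from the $L_\infty$--$L_{\vec p}$ estimate displayed in the introduction: evaluating at $z=0$ gives
\[
\Norm{\Big(\sum_{j\ge 0} 2^{j(s-a_1/p_1)p_1}|u_j(0,\cdot)|^{p_1}\Big)^{1/p_1}}{L_{p''}(\Rnl)}\lesssim \|u\|_{\fspq(\Rn)},
\]
where the $u_j$ are the Littlewood--Paley pieces of $u$. Each $u_j(0,\cdot)$ has its $\xi''$-spectrum contained in the projection of $\supp\hat u_j$ onto the $\xi''$-hyperplane, hence in the anisotropic $\vec a''$-ball of radius $\lesssim 2^j$. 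The dyadic ball criterion for Lizorkin--Triebel spaces (cf.\ the announcement in Section~\ref{mixd-sect}) then identifies the left-hand side with $\|\gamma_{0,1}u\|_{F^{s-a_1/p_1,a''}_{p'',p_1}(\Rnl)}$ up to a constant; the strict inequality in \eqref{spq-eq} provides the room needed for the criterion to close.

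For surjectivity, I would construct an explicit bounded right inverse. Fix $\phi\in\cs(\R)$ with $\phi(0)=1$ and $\hat\phi$ compactly supported near the origin. Given $g$ in the target space with Littlewood--Paley decomposition $g=\sum_{j\ge 0}g_j$ in $\Rnl$, set
\[
Kg(x_1,x'')=\sum_{j\ge 0}\phi(2^{j a_1}x_1)\,g_j(x'').
\]
The $j$-th summand has $\xi_1$-spectrum in $|\xi_1|\lesssim 2^{j a_1}$ and $\xi''$-spectrum in an $\vec a''$-anisotropic ball of radius $\lesssim 2^j$, so its joint $\vec a$-anisotropic spectral size is $\lesssim 2^j$. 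Formal trace-taking yields $\gamma_{0,1}(Kg)=\sum_j g_j=g$ because $\phi(0)=1$; convergence of $Kg$ in $\cs'(\Rn)$ and of the trace in $\cd'(\Rnl)$ are routine consequences of the estimates that follow. The dyadic ball criterion then gives
\[
\|Kg\|_{\fspq(\Rn)}\lesssim \Norm{\Big(\sum_{j\ge 0} 2^{jsq}|\phi(2^{j a_1}x_1)|^q|g_j(x'')|^q\Big)^{1/q}}{L_{\vec p}(\Rn)}.
\]

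The main obstacle is to dominate this right-hand side by $\|g\|_{F^{s-a_1/p_1,a''}_{p'',p_1}(\Rnl)}$, i.e.\ to convert the $L_{p_1}(\R_{x_1})$-norm of an $\ell^q$-sum into an $\ell^{p_1}$-sum in $j$ with weight $2^{j(s-a_1/p_1)p_1}$. The elementary scaling $\|\phi(2^{j a_1}\cdot)\|_{L_{p_1}(\R)}\sim 2^{-j a_1/p_1}$ is the mechanism behind both the smoothness shift $s\mapsto s-a_1/p_1$ and the emergence of $p_1$ as the sum exponent, but carrying out the $\ell^q$-to-$\ell^{p_1}$ exchange requires care when $q\ne p_1$. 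I would split $\R_{x_1}$ into the dyadic shells $|x_1|\sim 2^{-k a_1}$, exploit the rapid decay $|\phi(2^{j a_1}x_1)|\le C_N(1+2^{j a_1}|x_1|)^{-N}$ to isolate the effective range $j\ge k$, and then exchange sum and integral via a combination of Minkowski's inequality and the vector-valued maximal function estimate based on Bagby's inequality \cite{Bag75}. The extra smoothness budget in \eqref{spq-eq} is exactly what is needed to absorb the embedding losses of the resulting Nikol$'$skij-type estimate from \cite{JoSi}; this is why the surjectivity statement is formulated under \eqref{spq-eq} rather than the sharp condition \eqref{spq-cnd}.
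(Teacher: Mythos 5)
Your two-part plan---bound $\gamma_{0,1}$ via the $L_\infty$--$L_{\vec p}$ estimate and a convergence criterion, then build an explicit right inverse $Kg=\sum_j\phi(2^{ja_1}x_1)g_j(x'')$---is exactly the paper's route, and the boundedness half is essentially the paper's argument. On the right-inverse half there is a concrete gap: you estimate $Kg$ with the dyadic \emph{ball} criterion, having observed only that each summand has $\vec a$-spectral size $\lesssim 2^j$. But the ball criterion (Lemma~\ref{Fball-lem}) carries its own lower bound $s>\sum_k\tfrac{a_k}{\min(1,p_1,\dots,p_k,q)}-|\vec a|$, and \eqref{spq-eq} does \emph{not} imply it; e.g.\ with $n=2$, $\vec a=(1,1)$, $\vec p=(4,4)$, $q=1/2$, condition \eqref{spq-eq} permits any $s>5/4$, while the ball criterion needs $s>2$, so for $s\in\,]5/4,2]$ your estimate on $K$ has nothing to back it up. The fix is to notice that the summands actually satisfy the dyadic \emph{corona} condition \eqref{DCC-cnd}: for $j\ge1$ the Littlewood--Paley piece $g_j$ has $|\xi''|_{a''}\gtrsim 2^j$ on its spectrum, hence $|\xi|_{\vec a}\ge|\xi''|_{a''}\gtrsim 2^j$ on $\supp\cf(\phi(2^{ja_1}\cdot)g_j)$ as well, and Lemma~\ref{Fcor-lem} then closes the estimate with no lower bound on $s$. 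The paper achieves the same thing slightly differently: it takes $\supp\hat\psi\subset[1,2]$, so that the $\xi_1$-component of the $j$-th term already supplies $|\xi|_{\vec a}\ge 2^j$. This yields boundedness of $K_1\colon F^{s,a''}_{p'',p_1}(\Rnl)\to F^{s+a_1/p_1,\vec a}_{\vec p,q}(\Rn)$ for \emph{all} $s\in\R$ and \emph{all} $q$ (Theorem~\ref{K-thm}), from which surjectivity then follows immediately once the trace direction is in place.

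This also corrects your closing remark: \eqref{spq-eq} is not the ``extra smoothness budget'' that makes the $K$ estimate close. $K_1$ is bounded unconditionally; the strict inequality \eqref{spq-eq} is used only in the trace direction, where it makes the dyadic ball criterion on $\Rnl$ applicable to $\sum_j u_j(0,\cdot)$ (whose $\xi''$-spectra really are just balls, since evaluating at $x_1=0$ projects out the lower bound on $\xi_1$). On the machinery for bounding $K$: the paper does not invoke Bagby's maximal inequality or Nikol$'$skij embedding there; after reducing to $q<p_1$, it splits $\R_{x_1}$ into the shells $2^{-(k+1)a_1}\le|x_1|\le 2^{-ka_1}$, uses the rapid decay $|\psi(2^{ja_1}x_1)|\lesssim|2^{ja_1}x_1|^{-N}$, and then removes the $\ell_q$-sums by the discrete Hardy-type inequalities of Lemma~\ref{Y-lem}---which matches your dyadic-shell sketch but is more elementary than the reference to \cite{Bag75} and \cite{JoSi} suggests. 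Finally, the ``routine'' convergence of $Kg$ in $\cs'(\Rn)$ and the legitimacy of evaluating $\gamma_{0,1}(Kg)$ are not free: the paper devotes a careful argument (via Lemma~\ref{CF-lem} and the estimate \eqref{K1L-ineq}) to show $K_1v\in C_{\op{b}}(\R_{x_1},\cs'(\Rnl))$, so that the distributional trace is genuinely defined and equals $\sum_j g_j$.
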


The implication \eqref{cd'-cnd}$\implies$\eqref{spq-cnd} in
Theorem~\ref{main1-thm} is actually a consequence of
the following result, that we obtain from specific counterexamples.

\begin{lem}
  \label{spq-lem}
Let $m\in \{1,\dots,n\}$. If $\gamma_{0,m}$ is continuous
$\fspq(\Rn)\to \cd'(\Rnl)$, then it holds that 
$s\ge\tfrac{a_m}{p_m}+\sum_{k\ne m}
(\tfrac{a_k}{p_k}-a_k)_+$. In case
$s=\tfrac{a_m}{p_m}$ (so that $p_k\ge1$ for all $k\ne m$) continuity 
of $\gamma_{0,m}$ implies $p_m\le1$.
\end{lem}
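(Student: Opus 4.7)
The plan is to argue by contraposition in both cases: I will exhibit sequences $(f_N)$ in $\fspq(\Rn)$ of uniformly bounded $\fspq$-norm whose traces $\gamma_{0,m}f_N$ blow up when paired against a fixed $\varphi\in C_0^\infty(\Rnl)$. Since each such pairing is a continuous seminorm on $\cd'(\Rnl)$, this contradicts continuity of $\gamma_{0,m}\colon\fspq\to\cd'(\Rnl)$.

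For the main inequality, assume $s<\tfrac{a_m}{p_m}+\sum_{k\ne m}(\tfrac{a_k}{p_k}-a_k)_+$. Take a Schwartz tensor bump $\psi(x)=\prod_k\phi_k(x_k)$ with $\supp\phi_k\subset[-1,1]$ and $\hat\phi_k$ in a fixed dyadic annulus, so that $\hat\psi$ lies in a fixed anisotropic corona. For integers $j\ge1$, parameters $\alpha_k\in(0,1)$ for $k\ne m$, and $N_k=\lfloor 2^{ja_k\alpha_k}\rfloor$, set
\be
   f_j(x)=\phi_m(2^{ja_m}x_m)\prod_{k\ne m}\sum_{\nu_k=0}^{N_k-1}\phi_k(2^{ja_k}x_k-2\nu_k),
\ee
i.e.\ the sum of $\prod_{k\ne m}N_k$ disjoint tangential translates of the anisotropic dilate $\psi(\delta^{\vec a}_{2^j}x)$. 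Since $\hat f_j$ still lies in a single anisotropic corona at scale $2^j$, the dyadic ball criterion of Section~\ref{mixd-sect} gives $\|f_j\|_{\fspq}\sim 2^{js}\|f_j\|_{\lpvec}$, while the tensor-product factorisation of $\lpvec$ and the disjointness yield $\|f_j\|_{\lpvec}\sim 2^{-j\sum_k a_k/p_k}\prod_{k\ne m}N_k^{1/p_k}$. On the trace side, pairing $\gamma_{0,m}f_j=\phi_m(0)\prod_{k\ne m}\sum_{\nu_k}\phi_k(2^{ja_k}x_k-2\nu_k)$ against a tangential tensor product $\varphi=\prod_{k\ne m}\eta_k$ with $\eta_k\in C_0^\infty(\R)$, $\eta_k(0)>0$, a change of variable and the fact that the translates concentrate near $0$ as $j\to\infty$ give $\langle\gamma_{0,m}f_j,\varphi\rangle\sim\prod_{k\ne m}N_k\cdot 2^{-ja_k}$. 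The ratio $\langle\gamma_{0,m}f_j,\varphi\rangle/\|f_j\|_{\fspq}$ is therefore of order $2^{j[a_m/p_m-s+\sum_{k\ne m}a_k(1-\alpha_k)(1/p_k-1)]}$; choosing $\alpha_k$ close to $0$ whenever $p_k<1$ and close to $1$ otherwise makes the bracketed exponent as close as desired to $a_m/p_m-s+\sum_{k\ne m}(a_k/p_k-a_k)_+$, which is strictly positive by assumption. Letting $j\to\infty$ yields the required contradiction and hence the inequality.

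For the borderline case, assume $s=\tfrac{a_m}{p_m}$ and $p_m>1$ (so that $p_k\ge 1$ for $k\ne m$ by the first part). I reduce to one dimension by tensoring: fix $\chi\in C_0^\infty(\R)$ with $\chi(0)=1$ and, for $g\in\cs'(\R)$, set $f(x)=g(x_m)\prod_{k\ne m}\chi(x_k)$. Because $\hat\chi$ is concentrated near $0$, the anisotropic dyadic decomposition of $f$ is, up to constants, driven by the $1$-D dyadic decomposition of $g$ through the correspondence ``$1$-D scale $ja_m\leftrightarrow$ anisotropic scale $j$'', giving the equivalence $\|f\|_{\fspq}\asymp\|g\|_{F^{1/p_m}_{p_m,q}(\R)}$. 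For the $1$-D counterexample, pick $\phi\in\cs(\R)$ with $\hat\phi$ supported in $[1,2]$, $\phi(0)=1$, and $\beta\in(1/p_m,1]$ (a nonempty interval because $p_m>1$), and set $g_N(x)=\sum_{j=1}^N j^{-\beta}\phi(2^jx)$. The supports $\supp\widehat{\phi(2^j\cdot)}\subset[2^j,2^{j+1}]$ are pairwise disjoint, and a routine Lizorkin--Triebel computation over the dyadic shells $|x|\sim 2^{-\ell}$ gives $\|g_N\|^{p_m}_{F^{1/p_m}_{p_m,q}(\R)}\sim\sum_\ell\ell^{-\beta p_m}<\infty$ uniformly in $N$, while $g_N(0)=\sum_{j=1}^N j^{-\beta}\to\infty$ since $\beta\le 1$. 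Tensoring with $\chi$ gives an $f_N\in\fspq(\Rn)$ of bounded norm whose trace $g_N(0)\prod_{k\ne m}\chi(x_k)$ pairs to infinity against any fixed tangential bump with non-vanishing pairing against $\prod\chi$, completing the contradiction.

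The main obstacle is the careful verification of the norm equivalences: that $\|f_j\|_{\fspq}\sim 2^{js}\|f_j\|_{\lpvec}$ really applies to the disjoint-translate superpositions (it does, since their joint Fourier support remains a single anisotropic corona); the tensor equivalence $\|g\otimes\chi\|_{\fspq}\asymp\|g\|_{F^{1/p_m}_{p_m,q}(\R)}$ underlying the borderline step, which requires reading off the anisotropic Littlewood--Paley decomposition from the $1$-D one via the concentration of $\hat\chi$ near the origin; and the diagonal optimisation of the $\alpha_k$ in the main inequality. Each is routine, but the bookkeeping is delicate, and all rely on the dyadic ball criteria and scaling estimates assembled in Section~\ref{mixd-sect}.
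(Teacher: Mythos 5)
Your overall strategy — build a sequence of test functions whose $\fspq$-norms are controlled but whose traces blow up against a fixed $\varphi$ — is the right one and matches the paper in spirit; the paper itself works with a Ces\`aro average $v_j=\tfrac1j\sum_{l=j+1}^{2j}w_l$ of dilated tensor products and a direct norm calculation (Lemma~\ref{normvj-lem}), while you prefer to tune the number $N_k$ of disjoint translates. Your borderline step (reduce to $1$-D via tensoring with $\chi$, then use $g_N=\sum_{j\le N}j^{-\beta}\phi(2^j\cdot)$ with $1/p_m<\beta\le1$) is a genuinely different, and quite clean, route to the same conclusion.

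However, there is a real flaw in your construction for the main inequality. You require simultaneously $\supp\phi_k\subset[-1,1]$ \emph{and} $\hat\phi_k$ supported in a fixed dyadic annulus for every $k$; a nonzero function cannot have compact support on both sides, so this is already inconsistent. More seriously, if $\hat\phi_k$ lies in an annulus away from the origin for $k\ne m$, then $\int\phi_k=\hat\phi_k(0)=0$, and the trace pairing against $\eta_k$ does not behave as you claim: a Taylor expansion of $\eta_k$ shows $\int\phi_k(2^{ja_k}x_k-2\nu_k)\eta_k(x_k)\,dx_k=O(2^{-2ja_k}(1+\nu_k))$ rather than $\sim 2^{-ja_k}$, so the numerator $\langle\gamma_{0,m}f_j,\varphi\rangle$ picks up extra decaying factors $2^{-ja_k}$ for each $k\ne m$; the bracketed exponent then cannot be made positive, and the contradiction collapses. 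The fix is to put only $\hat\phi_m$ in an annulus (so $\phi_m(0)\ne0$ is achievable), and to take $\hat\phi_k$ supported near the origin with $\hat\phi_k(0)\ne0$ (so $\int\phi_k\ne 0$) for $k\ne m$; the tensor then still has Fourier support in a single anisotropic corona because $|\xi|_{\vec a}$ is controlled by $|\xi_m|^{1/a_m}$. This is precisely the arrangement the paper uses (its $g$ has spectrum in an annulus and sits in the $x_m$-slot, while $f$ satisfies $\int f=1$ and has spectrum near $0$). But then $\phi_k$ for $k\ne m$ no longer has compact spatial support, so the exact disjointness you invoke for the $L_{\vec p}$-computation must be replaced by almost-disjointness via rapid decay — doable, but it is exactly the sort of "routine but delicate" point your argument cannot afford to leave vague once the cancellation issue is corrected.

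As a comparison of methods: the paper's $v_j$ construction is engineered so that $\Phi_l\equiv1$ on $\supp\hat w_l$, making the $\fspq$- and $\bspq$-norm computations literal identities (no hidden constants), and the $\tfrac1j$-averaging gives the sharp rate $\norm{v_j}{\bspq}\sim j^{1/q-1}$; coupled with the embedding $B^{t,\vec a}_{\vec p,2}\imb\fspq$ for $s<t$ and with the computation in $F^{s,\vec a}_{\vec p,p_m}$ at $s=a_m/p_m$, one family of examples settles both the strict inequality and the borderline. Your two-stage scheme (translate counting plus a separate $1$-D borderline example) is workable after the Fourier-support repair, and the $1$-D reduction is appealing, but it requires you to verify the tensor equivalence $\norm{g\otimes\chi}{\fspq}\asymp\norm{g}{F^{1/p_m}_{p_m,q}(\R)}$ with the non-integer rescaling of anisotropic levels $l\leftrightarrow la_m$, which is another place where "routine" hides nontrivial bookkeeping.
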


%%%%%%%%%%%%%%%%%%%%%%%%%%%%%%%%%%%%%%%%%%%%%%%%%%%%%%%%%%%%%%%%%%%%
%% traces in  x_n
%%%%%%%%%%%%%%%%%%%%%%%%%%%%%%%%%%%%%%%%%%%%%%%%%%%%%%%%%%%%%%%%%%%%
In connection with restriction to the hyperplane given by $x_n=0$, 
our result corresponding to Theorem~\ref{main1-thm} leaves a borderline case
open in the quasi-Banach space case.

\begin{thm}   \label{mainn-thm}
For a given anisotropy
$\vec a\in\,]0,\infty[\,^n$,
let $\vec p\in \,]0,\infty[\,^n$, $0<q\le\infty$ and $s\in \R$.
For the trace $\gamma_{0,n}$ on $\{x_n=0 \}$
it holds for the following properties of a triple
$(s,\vec p, q)$ that \eqref{spq-cnd'}$\implies$\eqref{cd'-cnd'}:
\begin{rmlist}
  \item
  \label{spq-cnd'}
 $(s,\vec p,q)$ satisfies 
\begin{equation}
  s\ge\frac{a_n}{p_n}+\sum_{k<n}(\frac{a_k}{p_k}-a_k)_+
  \label{spq-ineq'}
\end{equation}
and, in addition, equality only holds if $p_n\le1$;
  \item
  \label{cd'-cnd'}
    the operator $\gamma_{0,n}$ is continuous from $\fspq(\Rn)$ to 
    $\cd'(\Rnl)$.
\end{rmlist}
Conversely \eqref{cd'-cnd'}$\implies$\eqref{spq-cnd'}
in case $p_k\ge1$ for all $k<n$; and if $0<p_k<1$ for some
$k\in \{1,\dots,n-1\}$, then \eqref{cd'-cnd'} 
implies the inequality \eqref{spq-ineq'}.

When \eqref{spq-cnd'} holds, 
then $\fspq(\Rn)\imb C_{\op{b}}(\R,L_{r'}(\Rnl))$ 
%for
with $r_k=\max(1,p_k)$
%, $k=1,\dots,n-1$.
for $k\le n-1$.
\end{thm}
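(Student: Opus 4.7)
The approach is to establish the embedding $\fspq(\Rn)\imb C_{\op{b}}(\R_{x_n},L_{r'}(\Rnl))$ under hypothesis \eqref{spq-cnd'}, from which both the continuity \eqref{cd'-cnd'} and the concluding embedding statement of the theorem follow at once (by evaluation at $x_n=0$ and the inclusion $L_{r'}(\Rnl)\imb\cd'(\Rnl)$). The converse is handled by Lemma~\ref{spq-lem} applied with $m=n$: it always delivers the inequality \eqref{spq-ineq'}, and in the fully Banach range $p_k\ge1$ for all $k<n$ its equality-case refinement forces $p_n\le1$, giving the full content of \eqref{spq-cnd'}. When some $p_k<1$ for $k<n$, the hypothesis ``$p_i\ge1$ for all $i\ne m$'' in the Lemma's equality clause fails, so nothing beyond the inequality can be extracted --- in exact agreement with the open borderline claimed by the theorem.

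For the embedding, decompose $u=\sum_{j\ge0}u_j$ via an anisotropic Littlewood--Paley resolution, so that each $\widehat{u_j}$ is supported in a dyadic corona adapted to $\vec a$ (ball for $j=0$). The Nikol$'$skij inequality in the $x_n$-direction, sharpened through Peetre--Fefferman--Stein maximal functions as outlined in the introduction, produces a pointwise bound
\[
\sup_{x_n\in\R}|u_j(x',x_n)|\le c\,2^{ja_n/p_n}\,u^*_j(x'),
\]
with $u_j^*$ compatible with the mixed-norm $L_{\vec p}$-framework. For each $k<n$ with $p_k<1$, a further Nikol$'$skij step in $x_k$ upgrades $L_{p_k}$ to $L_1=L_{r_k}$ at cost $2^{j(a_k/p_k-a_k)}$, while for $p_k\ge1$ the norm $L_{p_k}=L_{r_k}$ is retained via Minkowski. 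Iterating these steps yields
\[
\Bigl\|\sup_{x_n\in\R}|u_j(\cdot,x_n)|\Bigr\|_{L_{r'}(\Rnl)}\le c\,2^{j\theta}\,\Nrm{u_j^*}{L_{\vec p}(\Rn)},\qquad\theta=\frac{a_n}{p_n}+\sum_{k<n}\Bigl(\frac{a_k}{p_k}-a_k\Bigr)_+.
\]

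Under the strict inequality $s>\theta$, summing in $j$ and applying the estimate $\bigl\|\sup_j 2^{js}u_j^*\bigr\|_{L_{\vec p}}\le c\,\|u\|_{\fspq}$ from the introduction, one obtains absolute convergence of $\sum_j u_j(x',x_n)$ in $L_{r'}(\Rnl)$ uniformly in $x_n$; since each $u_j$ is smooth, this delivers $u\in C_{\op{b}}(\R,L_{r'}(\Rnl))$. The main obstacle is the borderline $s=\theta$ with $p_n\le1$, where geometric decay in $j$ is lost: here I would take $p_n$-th powers and invoke the inequality $\bigl(\sum_j|a_j|\bigr)^{p_n}\le\sum_j|a_j|^{p_n}$ valid for $0<p_n\le1$, combined with the dyadic ball criterion for $\fspq$ and the embedding $F^{s,\vec a}_{\vec p,q}\imb F^{s,\vec a}_{\vec p,\infty}$, in order to recover summability. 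A density argument from the smooth functions finally transfers uniform boundedness and continuity in $x_n$ to the full limit $u\in\fspq$.
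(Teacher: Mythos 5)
Your overall architecture matches the paper's: the converse via Lemma~\ref{spq-lem} with $m=n$, the Littlewood--Paley decomposition combined with Peetre--Fefferman--Stein maximal functions and Nikol$'$skij in the tangential variables, the exponent $\theta=\tfrac{a_n}{p_n}+\sum_{k<n}(\tfrac{a_k}{p_k}-a_k)_+$, and the use of $\ell_{p_n}\hookrightarrow\ell_1$ at the borderline. Those are exactly the ingredients the paper uses (mutatis mutandis from the $\gamma_{0,1}$ case). However, several links in the chain are not in a form that would survive being written out.

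First, the claimed pointwise bound
$\sup_{x_n}|u_j(x',x_n)|\le c\,2^{ja_n/p_n}u_j^*(x')$
is not a Peetre-maximal-function estimate, and your $u_j^*$ changes domain from $\Rnl$ to $\Rn$ a line later. What actually happens in the paper is more delicate: one has the pointwise bound $|u_j(x',z)|\le c\,u_j^*(\vec t;(x',x_n))$ only for $x_n$ in an interval of length comparable to $2^{-ja_n}$ around $z$; the factor $2^{ja_n/p_n}$ appears \emph{after} raising to power $p_n$ and integrating over that short interval, i.e.\
$2^{-ja_n}\Norm{u_j(\cdot,z)}{L_{p'}}^{p_n}\le c\int \Norm{u_j^*(\vec t;(\cdot,x_n))}{L_{p'}}^{p_n}\,dx_n$.
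Without this integration step there is nothing to convert the $L_{\vec p}$-norm of the maximal function into a bound that is uniform in $z$. You should make this explicit.

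Second, the passage from uniform convergence of $\sum_j u_j(\cdot,z)$ in $L_{r'}$ to the conclusion $u\in C_{\op{b}}(\R,L_{r'}(\Rnl))$ is incomplete: you obtain a limit function $f(z)=\sum_j u_j(\cdot,z)$, but you still must identify the induced distribution $\Lambda_f$ with $u$. The paper does this via the explicit majorisation argument borrowed from the $\gamma_{0,1}$ proof. Without this step you have only shown that \emph{some} continuous $L_{r'}$-valued function exists, not that it is the given $u$.

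Third, your invocation of the ``dyadic ball criterion'' at the borderline is misplaced: that criterion is used in Theorem~\ref{mainn-thm'} to show the trace lands in a Besov space, not to obtain the $C_{\op{b}}$ embedding. What is needed at the borderline is exactly the $\ell_{p_n}\hookrightarrow\ell_1$ inequality which you already state; the ball criterion adds nothing here.

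Finally, the closing ``density argument from the smooth functions'' fails for $q=\infty$, where $\cs$ is not dense in $\fspq$. The paper instead reduces to $q<\infty$ by a Sobolev embedding increasing a component of $p''$, and then uses translation continuity (Proposition~\ref{FBtrans-prop}). Your suggested embedding $\fspq\hookrightarrow F^{s,\vec a}_{\vec p,\infty}$ goes in the wrong direction for this purpose.
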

Here the implications of \eqref{cd'-cnd'} are obtained from 
Lemma~\ref{spq-lem} for $m=n$.

\bigskip

For the trace $\gamma_{0,n}$, that acts in the outer integration variable, 
the range is generically a \emph{Besov} space:

\begin{thm}   \label{mainn-thm'}
Let $\vec p\in \,]0,\infty[\,^n$, $0<q\le\infty$ and $\vec
a\in\,]0,\infty[\,^n$. 
When the triple $(s,\vec p,q)$  fulfils
\begin{equation}
  s>\frac{a_n}{p_n}+\sum_{k<n} (\frac{a_k}{\min(1,p_1,\dots,p_k)}-a_k),
  \label{spqn-cnd}
\end{equation}
then $\gamma_{0,n}$ is a bounded \emph{surjection} 
$\fspq(\Rn)\to B^{s-\frac{a_n}{p_n},a'}_{p',p_n}(\Rnl)$.
\end{thm}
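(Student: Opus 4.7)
My strategy is to mirror the approach used for Theorem~\ref{main1-thm'}: decompose by Littlewood--Paley, apply a one-variable Nikol$'$skij inequality in $x_n$, and combine with Peetre--Fefferman--Stein maximal functions. The difference is that, because $x_n$ is the outermost integration variable in $L_{\vec p}$, the natural target has the sum exponent $p_n$ on the \emph{outside} of the tangential $L_{p'}$-norm, producing a Besov rather than a Lizorkin--Triebel space.

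\emph{Boundedness.} Given $u\in \fspq(\Rn)$ with anisotropic Littlewood--Paley decomposition $u=\sum_j u_j$, the trace exists by Theorem~\ref{mainn-thm} and equals $\sum_j u_j(\cdot,0)$. Since $u_j(x',\cdot)$ is band-limited in $\xi_n$ to a ball of radius $\sim 2^{ja_n}$, Nikol$'$skij's inequality gives pointwise in $x'$
\begin{equation*}
 |u_j(x',0)|\le C\,2^{ja_n/p_n}\nrm{u_j^{*}(x',\cdot)}{L_{p_n}(\R)},
\end{equation*}
with $u_j^{*}$ a Peetre maximal function. Taking $L_{p'}$-norms in $x'$ and using Minkowski's inequality to commute the $L_{p_n}$-layer in $x_n$ past the tangential $L_{p_k}$-layers, then summing the $p_n$-th powers against the weights $2^{j(s-a_n/p_n)p_n}$, identifies the Besov norm of $\gamma_{0,n}u$ with an expression controlled via the vector-valued Fefferman--Stein maximal inequality recalled in Section~\ref{mixd-sect}, and so bounded by $C\nrm{u}{\fspq}$.

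\emph{Surjectivity.} For $g\in B^{s-a_n/p_n,a'}_{p',p_n}(\Rnl)$ with Littlewood--Paley decomposition $g=\sum_j g_j$, pick $\psi\in\cs(\R)$ with $\psi(0)=1$ and $\supp\hat\psi\subset\{|\xi|\le 1\}$, and set
\begin{equation*}
 Rg(x',x_n)=\sum_{j\ge 0} g_j(x')\,\psi(2^{ja_n}x_n).
\end{equation*}
Then $\gamma_{0,n}Rg=g$. Each summand has Fourier support in an anisotropic corona of $\vec a$-size $\sim 2^j$ for $j\ge 1$, so the dyadic corona criterion from Section~\ref{mixd-sect} reduces the $\fspq$-norm of $Rg$ to the $L_{\vec p}(\ell^q)$-expression built from $\{2^{js}g_j(x')\psi(2^{ja_n}x_n)\}_j$. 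Computing the $L_{p_n}$-integral in $x_n$ term by term produces the factor $2^{-ja_n/p_n}\nrm{\psi}{L_{p_n}}$ per summand, and a further Minkowski step interchanging $\ell^q$ with $L_{p'}$ and with $\ell^{p_n}$ identifies the result with the Besov norm of $g$.

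\emph{Main obstacle.} The key difficulty is that both Minkowski steps --- swapping the $x_n$-integration past the tangential layers in the boundedness proof, and swapping $\ell^q$ with $\ell^{p_n}$ and the $L_{p_k}$'s in the surjectivity proof --- require favourable orderings of the exponents. Condition~\eqref{spqn-cnd} is exactly what compensates for the lack of such orderings: the successive shifts $a_k/\min(1,p_1,\dots,p_k)-a_k$ arise as the cost of a chain of Sobolev-type embeddings $\fspq\imb F^{\tilde s,\vec a}_{\tilde{\vec p},q}$ lifting each $p_k<1$ up to $1$, after which the needed Minkowski steps become legitimate. Tracking these shifts coordinate by coordinate, in the order imposed by the $\min$, is the main technical burden.
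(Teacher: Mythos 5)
Your surjectivity sketch via the extension operator $R$ (which is essentially the $K_n$ of Theorem~\ref{K-thm}) is sound; even with $\supp\hat\psi\subset\{|\xi|\le 1\}$ the summands $g_j\otimes\psi(2^{ja_n}\cdot)$ still satisfy the dyadic corona condition for $j\ge1$, since $|\xi'|_{a'}=|(\xi',0)|_{\vec a}\le|\xi|_{\vec a}\le|\xi'|_{a'}+|\xi_n|^{1/a_n}$ pins $|\xi|_{\vec a}\sim 2^j$ on the support, so Lemma~\ref{Fcor-lem} applies for every $s\in\R$, as required.

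The boundedness argument, however, has a genuine gap. After the one-variable Nikol$'$skij bound $|u_j(x',0)|\le C\,2^{ja_n/p_n}\,\nrm{u_j^*(x',\cdot)}{L_{p_n}(\R)}$ you take $L_{p'}$-norms in $x'$ and then move the inner $L_{p_n}(\R_{x_n})$-layer outward past the tangential $L_{p_k}$-layers to reach $\nrm{u_j^*}{L_{\vec p}(\Rn)}$. Minkowski's inequality in that direction needs $p_n\le p_k$ for every $k<n$, and a subsequent swap of $\ell^{p_n}$ past $L_{\vec p}$ and past $\ell^q$ would need the opposite ordering $p_n\ge\max(p_1,\dots,p_{n-1},q)$. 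Condition~\eqref{spqn-cnd} gives neither: for $n=2$, $p_1=2$, $p_2=3$ it reads $s>a_2/3$, yet $p_n>p_1$, and lifting $p_k<1$ up to $1$ by Sobolev embedding (your proposed remedy) does nothing since $p_1>1$ already. So \eqref{spqn-cnd} is not a cure for exponent disorder; its actual role in the paper is as the hypothesis of the dyadic ball criterion for Besov spaces on $\Rnl$ (Lemma~\ref{B-lem}). The paper's proof avoids Minkowski entirely: as in \eqref{uj*-ineq} one bounds $|u_j(x',z)|\le c\,u_j^*(\vec t;(x',x_n))$ pointwise for all $x_n$ in a dyadic interval of length $\sim2^{-ja_n}$ near $z$, takes the $L_{p'}$-norm of both sides \emph{with $x_n$ still a free variable}, raises to the power $p_n$ and integrates $x_n$ over the disjoint dyadic intervals, then uses $\sup_k\nrm{\cdot}{L_{p'}}\le\nrm{\sup_k|\cdot|}{L_{p'}}$ and Proposition~\ref{maximalf} with $q=\infty$. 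The ordering of integrations in $L_{\vec p}$ is never disturbed, and membership of $\gamma_{0,n}u$ in $B^{s-a_n/p_n,a'}_{p',p_n}(\Rnl)$ then follows from Lemma~\ref{B-lem} under~\eqref{spqn-cnd}.
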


Since $F^s_{p,p}=B^s_{p,p}$ in the isotropic case, we get 
for $s>\tfrac{1}{p}$, $1<p<\infty$ that
\begin{equation}
  \gamma_{0,1}(F^s_{p,q})= F^{s-1/p}_{p,p}=B^{s-1/p}_{p,p}
  =\gamma_{0,n}(F^s_{p,q}).
\end{equation}
In this way the present results give back the isotropic trace theory,
and they show how things split up qualitatively 
(with $F$- and $B$-spaces as ranges) 
and quantitatively (with $p_1$ and $p_n$ as sum exponents)
when mixed norms are introduced.

\bigskip

In Theorems~\ref{main1-thm'} and \ref{mainn-thm'} the surjectivity was 
just a convenient way to express the optimality of taking
$F^{s-\frac{a_1}{p_1},a''}_{p'',p_1}$ and 
$B^{s-\frac{a_n}{p_n},a'}_{p',p_n}$, respectively, as co-domains. But not
surprisingly the stronger fact that $\gamma_{0,1}$
and $\gamma_{0,n}$ have everywhere defined right-inverses also holds in the
present context. 

\begin{thm}
  \label{K-thm}
There exist continuous operators $K_1$, $K_n\colon \cs'(\Rnl)\to
\cs'(\Rn)$, both with range in the space $C_{\op{b}}(\R,\cs'(\Rnl))$, 
such that for every $v\in \cs'(\Rnl)$,
\begin{equation}
  \gamma_{0,1}(K_1 v)= v, \qquad
  \gamma_{0,n}(K_n v)= v.
  \label{gK-eq}
\end{equation}
Moreover,
for any $\vec p=(p_1,\dots,p_n)$ in $\,]0,\infty[\,^n$ and any 
$\vec a$, 
\begin{alignat}{2}
  K_1&\colon F^{s,a''}_{p'',p_1}(\Rnl)&\to F^{s+\frac{a_1}{p_1},\vec a}_{\vec
p,q}(\Rn) &\quad\text{for}\quad 0<q\le\infty,
  \label{K1F-eq}  \\
  K_n&\colon B^{s,a'}_{p',p_n}(\Rnl)&\to F^{s+\frac{a_n}{p_n},\vec a}_{\vec
p,q}(\Rn) &\quad\text{for}\quad 0<q\le\infty,
  \label{KnF-eq}  
\end{alignat}
are bounded maps for arbitrary $s\in\R$.
\end{thm}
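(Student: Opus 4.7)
The plan is to construct $K_1$ and $K_n$ explicitly via an anisotropic Littlewood--Paley decomposition on $\Rnl$ lifted by a single bump in the missing variable. Fix $\psi\in\cs(\R)$ with $\psi(0)=1$ and $\supp\hat\psi\subset[-1,1]$, and for $v\in\cs'(\Rnl)$ write $v=\sum_{j\ge 0} v_j$ using the anisotropic Littlewood--Paley decomposition adapted to $\vec a''$ (for $K_1$) or to $\vec a'$ (for $K_n$). Set
\begin{equation*}
K_1v(x_1,x'')=\sum_{j=0}^\infty \psi(2^{ja_1}x_1)\,v_j(x''),\qquad
K_nv(x',x_n)=\sum_{j=0}^\infty \psi(2^{ja_n}x_n)\,v_j(x').
\end{equation*}
Since each $v_j$ is a smooth function of at most polynomial growth and $\psi$ is rapidly decreasing, a routine argument shows the series converge in $\cs'(\Rn)$ and define elements of $C_{\op{b}}(\R,\cs'(\Rnl))$; taking the trace and using $\psi(0)=1$ together with $v=\sum_j v_j$ in $\cs'(\Rnl)$ produces \eqref{gK-eq}.

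For the norm bounds a short Paley--Wiener computation shows that the Fourier transform of $\psi(2^{ja_1}x_1)v_j(x'')$ is supported in $\{|\xi_1|\le 2^{ja_1}\}$ and, in $\xi''$, in an $\vec a''$-dyadic corona of radius $\sim 2^j$; together this places it inside an $\vec a$-anisotropic ball of radius $\sim 2^j$ in $\Rn$. The dyadic ball criterion from Section~\ref{mixd-sect} then yields, with $\tilde s=s+a_1/p_1$,
\begin{equation*}
\|K_1v\|_{F^{\tilde s,\vec a}_{\vec p,q}(\Rn)}
\lesssim
\Big\|\Big(\sum_{j=0}^\infty 2^{j\tilde sq}|\psi(2^{ja_1}x_1)|^q|v_j(x'')|^q\Big)^{1/q}\Big\|_{L_{\vec p}(\Rn)},
\end{equation*}
and an entirely analogous inequality for $K_nv$ with $\tilde s=s+a_n/p_n$. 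The task then reduces to bounding the right-hand side by $\|v\|_{F^{s,a''}_{p'',p_1}(\Rnl)}$ (for $K_1$) or by $\|v\|_{B^{s,a'}_{p',p_n}(\Rnl)}$ (for $K_n$).

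The transparent case is when the $\ell_q$-exponent coincides with the distinguished $L_p$-exponent, i.e.\ $q=p_1$ for $K_1$ or $q=p_n$ for $K_n$. Then Fubini allows one to exchange the $j$-sum with the $L_{p_1}$-integral in $x_1$ (innermost) or the $L_{p_n}$-integral in $x_n$ (outermost), using $\|\psi(2^{ja_k}\cdot)\|_{L_{p_k}}^{p_k}=\|\psi\|_{p_k}^{p_k}2^{-ja_k}$; the factor $2^{-ja_k/p_k}$ cancels the shift in $\tilde s$, leaving precisely $\|v\|_{F^{s,a''}_{p'',p_1}}$ for $K_1$, and $\|v\|_{B^{s,a'}_{p',p_n}}$ for $K_n$, where the outermost position of $L_{p_n}$ is exactly what produces a Besov norm, since $\|v_j\|_{L_{p'}}^{p_n}$ ends up \emph{outside} the inner integral. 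For general $q$ I would use the Schwartz decay $|\psi(2^{ja_k}x_k)|\le C_N(1+2^{ja_k}|x_k|)^{-N}$: partition the missing axis into anisotropic dyadic shells $|x_k|\sim 2^{-k_0 a_k}$, observe that on each shell only the terms with $j\gtrsim k_0$ contribute essentially while the rest decay geometrically in $j-k_0$, and then interpolate between $\ell_q$ and $\ell_{p_k}$ via Hölder with weights $2^{-\varepsilon a_k|j-k_0|}$. For $K_n$ an additional Minkowski-type exchange is needed between the inner $L_{p'}$-norm in $x'$ and the $\ell_q$-sum; this splits into $q\ge\max p'_\ell$ (direct Minkowski) and $q<\max p'_\ell$ (reduced by embedding at the cost of a harmless loss in $q$).

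The main obstacle will be handling the case $q\ne p_1$ (respectively $q\ne p_n$) uniformly in $s\in\R$: the naive Fubini calculation only matches the exponent in the distinguished case, and the shell-and-Hölder device must be executed without imposing a lower bound on $\tilde s$. I expect this to go through because the ball criterion from Section~\ref{mixd-sect} applies for arbitrary $s$ when formulated via the Peetre maximal function, and because the lifting property of $\fspq$ permits a harmless shift of $s$ to any convenient value before running the estimate.
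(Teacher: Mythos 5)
The construction is essentially the paper's, but you lose the proof at the key step of choosing which convergence criterion applies, and this is precisely what makes the result hold for \emph{all} $s\in\R$.

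Your Paley--Wiener observation is too weak: since $v_j=\varphi_j(D)v$ is a genuine Littlewood--Paley piece, $\supp\cf v_j$ lies in an $\vec a''$-dyadic \emph{corona} of radius $\sim 2^j$, and combined with $|\xi_1|\le 2^{ja_1}$ this places $\supp\cf\bigl(\psi(2^{ja_1}\cdot)v_j\bigr)$ inside a dyadic \emph{corona} of $\Rn$, not merely a ball. Indeed by \eqref{ax-ineq}, $|\xi''|_{a''}\ge\frac{11}{20}2^j$ already forces $|\xi|_{\vec a}\ge\max_k|\xi_k|^{1/a_k}\ge\frac{11}{20(n-1)}2^j$, while the upper bound is straightforward. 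Because you only notice the ball, you invoke the dyadic ball criterion, which genuinely requires a lower bound on $s$; you then acknowledge this problem but the two fixes you offer do not work. The ball criterion's $s$-threshold is not an artifact of the formulation --- it cannot be removed by ``formulating via the Peetre maximal function,'' since that maximal function is exactly how the ball criterion is proved. And the lift $\Lambda_r$ of Proposition~\ref{lift} acts on $\Rn$ and does not commute with $K_1$ (its symbol $(1+|\xi|_{\vec a}^2)^{r/2}$ mixes $\xi_1$ with $\xi''$ nontrivially), so it cannot be used to shift $s$ on the target side. The paper chooses $\supp\cf\psi\subset[1,2]$ precisely to obtain the corona inclusion without any fiddling, and then applies the dyadic \emph{corona} criterion (Lemma~\ref{Fcor-lem}), which holds for every $s\in\R$; this is what makes \eqref{K1F-eq}--\eqref{KnF-eq} work unconditionally. (Note that with your $[-1,1]$ support the corona inclusion still holds because of the $\xi''$-corona, so the numerology could be repaired --- but your proposal as written does not see this and does not close the gap.)

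A second, less structural weakness: your reduction of the norm estimate to ``Fubini when $q=p_1$, shell-and-H\"older for general $q$'' is too vague to be checked. The paper's route is concrete: reduce by embedding to $q<p_1$, split the $x_1$-integral into $|x_1|>1$ and dyadic shells $2^{-(k+1)a_1}\le|x_1|\le 2^{-ka_1}$, estimate $|\psi(2^{ja_1}x_1)|$ by $\min(\|\psi\|_\infty,c_N|2^{ja_1}x_1|^{-N})$ with $N>1/p_1$, and then apply the discrete Hardy-type inequalities of Lemma~\ref{Y-lem} to absorb the partial-sum weights. Some device of this kind --- not a bare H\"older --- is needed to collapse the $\ell_q$-sum to an $\ell_{p_1}$-sum uniformly in $s$; you should either reproduce Lemma~\ref{Y-lem} or cite a comparable Hardy inequality explicitly. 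The $K_n$ case then needs an extra Minkowski exchange between the inner $L_{p'}$-norm and the $j$-sum, which is what converts the mixed sum into a Besov norm; your outline gestures at this but again leaves the mechanism unspecified.
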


\bigskip

Let us also briefly describe results for higher order traces $\gamma_{j,k}$.
Because they are composites of the trace $\gamma_{0,k}$ and differentiation 
$\partial^{j}_{x_k}$, both in the sense of distributions, and since
$\partial^{j}_{x_k}$ has order $ja_k$ in the $\fspq$-scale, the
continuity properties of $\gamma_{j,k}$ are straightforward consequences of
the above theorems. 

As usual, the surjectivity of $\gamma_{j,k}$ is implied by that of the
matrix-formed operator $\rho_{m,k}$ used for posing Cauchy problems,
\begin{equation}
  \rho_{m,k}=
  \left(\begin{smallmatrix}
  \gamma_{0,k}\\ \gamma_{1,k}\\ \vdots \\ \gamma_{m-1,k}
  \end{smallmatrix}\right).
  \label{rhom-eq}
\end{equation}
Under the assumptions 
$\vec p\in \,]0,\infty[\,^n$, $0<q\le\infty$ and $\vec a\in\,]0,\infty[\,^n$
as before, the following holds:
\begin{cor}   \label{gj1-cor}
When
$s>(m-1)a_1+\frac{a_1}{p_1}+\sum_{k>1} 
(\frac{a_k}{\min(1,p_2,\dots,p_k,q)}-a_k)$
then $\rho_{m,1}$ is a bounded surjection
\begin{equation}
  \rho_{m,1}\colon \fspq(\Rn)\to \prod_{j=0}^{m-1}
  F^{s-ja_1-\frac{a_1}{p_1},a''}_{p'',p_1}(\Rnl).
  \label{rhom1-eq}
\end{equation}
There is a continuous operator 
$K^{(m)}_1\colon \cs'(\Rnl)^m\to\cs'(\Rn)$, which maps $\cs'(\Rnl)^m$ into 
the domain of $\rho_{m,1}$ and is a right-inverse of $\rho_{m,1}$; and 
$K^{(m)}_{1}$ is furthermore
continuous with respect to the spaces in \eqref{rhom1-eq} for the specified
$s$. 
\end{cor}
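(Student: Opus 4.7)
\textbf{Boundedness of $\rho_{m,1}$.} Write $\gamma_{j,1}=\gamma_{0,1}\circ\partial_{x_1}^j$. Since $\partial_{x_1}^j$ is a Fourier multiplier of anisotropic order $ja_1$, it maps $\fspq(\Rn)$ boundedly into $F^{s-ja_1,\vec a}_{\vec p,q}(\Rn)$ for every $j$. The hypothesis on $s$ is exactly the inequality \eqref{spq-eq} with $s-(m-1)a_1$ in place of $s$; as this is the smallest smoothness level encountered, \eqref{spq-eq} holds for every $s-ja_1$, $j\le m-1$. Applying Theorem~\ref{main1-thm'} to each of the spaces $F^{s-ja_1,\vec a}_{\vec p,q}(\Rn)$ yields bounded surjections $\gamma_{j,1}\colon\fspq(\Rn)\to F^{s-ja_1-a_1/p_1,a''}_{p'',p_1}(\Rnl)$, and combining these gives the claimed boundedness of $\rho_{m,1}$ into the product space in \eqref{rhom1-eq}.

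\textbf{Construction of $K^{(m)}_1$.} To obtain a right-inverse I set up a triangular scheme around the lift $K_1$ of Theorem~\ref{K-thm}. For each $j=0,\dots,m-1$ put $L_j:=R_j\circ K_1$, where $R_0:=\op{id}$ and, for $j\ge1$, $R_j$ is a smoothly truncated $j$-fold antiderivative in $x_1$, e.g.\
\[
R_jf(x_1,x''):=\chi(x_1)\,\frac{1}{(j-1)!}\int_0^{x_1}(x_1-y)^{j-1}f(y,x'')\,dy,
\]
for a cutoff $\chi\in C^\infty_0(\R)$ with $\chi\equiv1$ near $0$. A direct computation, using $\partial_{x_1}^iR_j=R_{j-i}$ for $i\le j$ and $R_k(0,x'')=0$ for $k\ge1$, gives
\[
\gamma_{i,1}L_jv=\delta_{ij}v\ \ (i\le j),\qquad \gamma_{i,1}L_jv=T_{i-j}v\ \ (i>j),\qquad T_lv:=\gamma_{0,1}\partial_{x_1}^lK_1v.
\]
Given data $(v_0,\dots,v_{m-1})$, define $\tilde v_i$ recursively by $\tilde v_i:=v_i-\sum_{j<i}T_{i-j}\tilde v_j$ and set $K^{(m)}_1(v_0,\dots,v_{m-1}):=\sum_{j=0}^{m-1}L_j\tilde v_j$. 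A direct summation then shows $\rho_{m,1}K^{(m)}_1=\op{id}$.

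\textbf{Continuity and the main obstacle.} In the $F$-scale, $K_1$ gains $a_1/p_1$ smoothness and $R_j$ a further $ja_1$, so $L_j$ maps $F^{s-ja_1-a_1/p_1,a''}_{p'',p_1}(\Rnl)$ into $\fspq(\Rn)$; each $T_l$ factors as $\gamma_{0,1}\circ\partial_{x_1}^l\circ K_1$, bounded between the relevant trace spaces by the first paragraph, so the triangular recursion keeps every $\tilde v_i$ in $F^{s-ia_1-a_1/p_1,a''}_{p'',p_1}(\Rnl)$. Continuity of $K^{(m)}_1\colon\cs'(\Rnl)^m\to\cs'(\Rn)$ and the fact that its range lies in the domain of $\rho_{m,1}$ inherit from the corresponding properties of $K_1$, $R_j$ and $\partial_{x_1}^l$ on tempered distributions. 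The principal difficulty is verifying that the cutoff antiderivative $R_j$ really has the claimed lifting order $ja_1$ on the anisotropic mixed-norm $F$-scale: untruncated $x_1$-antidifferentiation is unbounded on $L_{\vec p}(\Rn)$, so the smoothing effect of $\chi$ has to be analysed with the Fourier-multiplier assertions and dyadic ball/corona criteria of Section~\ref{mixd-sect}. Once this is in place, the rest is routine index bookkeeping.
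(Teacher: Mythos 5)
Your first paragraph is correct and matches the paper's observation that boundedness of $\rho_{m,1}$ follows directly from $\gamma_{j,1}=\gamma_{0,1}\circ\partial_{x_1}^j$, the order $ja_1$ of $\partial_{x_1}^j$ on $\fspq$, and Theorem~\ref{main1-thm'} applied at each smoothness level $s-ja_1$, the hypothesis ensuring \eqref{spq-eq} holds at the lowest level $s-(m-1)a_1$.

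\textbf{The right-inverse.} Here your route diverges from the paper's, and there is a genuine gap, which you acknowledge but understate. The paper does not apply any antiderivative operator at all. Instead it re-chooses the auxiliary function of Theorem~\ref{K-thm}: pick $\psi\in\cfi C_0^\infty(\,]1,2[\,)$ with $\psi(0)=1$ \emph{and} $\psi'(0)=\dots=\psi^{(m-1)}(0)=0$ (by making $\cf\psi$ orthogonal in $L_2(\,]1,2[\,)$ to $\op{span}(\xi_1,\dots,\xi_1^{m-1})$ while having nonzero mean). Setting $\psi_\nu(x_1)=x_1^\nu\psi(x_1)/\nu!$ then gives $\gamma_{j,1}\psi_\nu=\delta_{j\nu}$ directly, and
\[
  K_{\nu,1}v(x)=\sum_{j=0}^\infty 2^{-ja_1\nu}\psi_\nu(2^{ja_1}x_1)\,v_j(x'')
\]
is a \emph{diagonal} right-inverse: $\gamma_{k,1}K_{\nu,1}v=\delta_{k\nu}v$. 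No triangular elimination is needed. Crucially, $\cf\psi_\nu$ is (up to constants) a $\xi_1$-derivative of $\cf\psi$ and therefore still supported in $[1,2]$, so the dyadic corona structure of the sum is preserved and the $K_1$-estimates carry over verbatim after inserting the factor $2^{-ja_1\nu}$.

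Your construction starts from the generic $K_1$ and applies $L_j=R_j\circ K_1$ with $R_j$ a truncated $j$-fold $x_1$-antiderivative. Computing the resulting sum, one finds
\[
  R_j\bigl(K_1v\bigr)(x)=\chi(x_1)\sum_k 2^{-ka_1j}\,\Psi_j(2^{ka_1}x_1)\,v_k(x''),
  \qquad \Psi_j(t)=\tfrac{1}{(j-1)!}\int_0^t(t-s)^{j-1}\psi(s)\,ds.
\]
The function $\Psi_j$ is \emph{not} Schwartz and $\cf\Psi_j$ is \emph{not} compactly supported (roughly $\cf\psi(\xi_1)/(\im\xi_1)^j$ plus boundary contributions); multiplying by the cutoff $\chi(x_1)$ does not restore the dyadic localisation in $\xi_1$. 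Consequently the dyadic corona criterion (Lemma~\ref{Fcor-lem}) and the $K_1$-estimates do not apply to $R_jK_1v$, and the claimed lifting order $ja_1$ of $R_j$ on $\fspq$ is unproved; it is not merely a matter of analysing the cutoff, but of re-establishing Fourier localisation that the antiderivative has destroyed. Until that is done, neither the boundedness of $L_j$ nor the convergence of the triangular recursion (whose $T_l$ are fine, but whose inputs $L_j\tilde v_j$ are not) is secured. The paper's polynomial-multiplication trick sidesteps precisely this difficulty, which is why it is the preferable route.
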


\begin{cor}   \label{gjn-cor}
When
$s>(m-1)a_n+\frac{a_n}{p_n}+\sum_{k<n} 
(\frac{a_k}{\min(1,p_1,\dots,p_k)}-a_k)$
then $\rho_{m,n}$ is a bounded surjection
\begin{equation}
  \rho_{m,n}\colon \fspq(\Rn)\to \prod_{j=0}^{m-1}
  B^{s-ja_n-\frac{a_n}{p_n},a'}_{p',p_n}(\Rnl).
  \label{rhomn-eq}
\end{equation}
There is a continuous operator 
$K^{(m)}_n\colon \cs'(\Rnl)^m\to\cs'(\Rn)$, which maps $\cs'(\Rnl)^m$ into 
the domain of $\rho_{m,n}$ and is a right-inverse of $\rho_{m,n}$; and 
$K^{(m)}_{n}$ is furthermore
continuous with respect to the spaces in \eqref{rhomn-eq} for the specified
$s$. 
\end{cor}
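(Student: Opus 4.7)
The plan is to derive Corollary~\ref{gjn-cor} from Theorem~\ref{mainn-thm'} and Theorem~\ref{K-thm} by exactly the scheme used for Corollary~\ref{gj1-cor}, with Besov ranges now replacing the Lizorkin--Triebel ranges. Two things must be supplied: boundedness of $\rho_{m,n}$ and the construction of a continuous right-inverse on the product of Besov spaces.

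For the boundedness I would factor each entry as $\gamma_{j,n}=\gamma_{0,n}\circ\partial^j_{x_n}$ in the distribution sense. The Fourier multiplier $\partial^j_{x_n}$ has anisotropic order $ja_n$, so it maps $\fspq(\Rn)$ boundedly into $F^{s-ja_n,\vec a}_{\vec p,q}(\Rn)$. The hypothesis on $s$ is just the worst-case shift of \eqref{spqn-cnd}: for every $j\in\{0,\ldots,m-1\}$ the triple $(s-ja_n,\vec p,q)$ still satisfies \eqref{spqn-cnd}, the binding case being $j=m-1$ (since enlarging $j$ lowers the shifted smoothness index). Theorem~\ref{mainn-thm'} applied componentwise then yields bounded surjections $\gamma_{j,n}\colon\fspq(\Rn)\to B^{s-ja_n-a_n/p_n,a'}_{p',p_n}(\Rnl)$, and stacking gives the assertion~\eqref{rhomn-eq}.

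For the right-inverse I would assemble $K^{(m)}_n(v_0,\ldots,v_{m-1}):=\sum_{j=0}^{m-1}T_jv_j$ from single-slot operators $T_j\colon\cs'(\Rnl)\to\cs'(\Rn)$ that satisfy the Kronecker-delta trace identities $\gamma_{l,n}(T_jv)=\delta_{jl}\,v$ for $0\le l\le m-1$ (so that $\rho_{m,n}K^{(m)}_n=\op{Id}$ automatically) and the continuity $T_j\colon B^{s-ja_n-a_n/p_n,a'}_{p',p_n}(\Rnl)\to\fspq(\Rn)$ for every admissible $s$. To build such a $T_j$ I would revisit the Fourier-dyadic construction of $K_n$ underlying Theorem~\ref{K-thm} and modify its symbol by inserting a factor $x_n^j\,r_j(\xi')/j!$, where $r_j(\xi')$ is a weight of anisotropic order $ja_n$ (essentially an anisotropic quasi-norm in $\xi'$ raised to the $ja_n$-th power), composed with a cutoff $\psi_j\in C^\infty$ in the variable $x_n\cdot(\text{anisotropic size of }\xi')$ satisfying $\psi_j(0)=1$ and $\psi_j^{(i)}(0)=0$ for $1\le i\le m-1$. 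By the Leibniz rule this kills the traces $\gamma_{l,n}$ at all levels $l\ne j$ with $l\le m-1$, while the weight $r_j$ provides the anisotropic lift needed to reach $\fspq(\Rn)$ from the Besov space of order $s-ja_n-a_n/p_n$.

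The main obstacle will be reconciling the Kronecker-delta trace pattern with the correct anisotropic smoothness lift $ja_n+a_n/p_n$. A naive ansatz $T_jv=(x_n^j/j!)\phi(x_n)K_nv$ with $\phi\in C^\infty_0(\R)$, $\phi(0)=1$, $\phi^{(i)}(0)=0$ for $i\ge1$, does realise the trace identities (if combined with a triangular correction processing $j$ downward from $m-1$), but it fails the required boundedness, since polynomial multiplication in the spatial variable does \emph{not} raise anisotropic smoothness in $\fspq$. Placing the extra smoothness weight on the Fourier side as above circumvents this, and the mapping property of each $T_j$ between the spaces in~\eqref{rhomn-eq} will then follow from the same dyadic estimates used to prove Theorem~\ref{K-thm}.
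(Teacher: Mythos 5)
Your boundedness reduction is right: each $\gamma_{j,n}=\gamma_{0,n}\circ\partial^j_{x_n}$ costs $ja_n$ in smoothness, the worst case is $j=m-1$, and Theorem~\ref{mainn-thm'} applied componentwise gives~\eqref{rhomn-eq}. You also correctly see that the right-inverse should be assembled from single-slot maps $K_{\nu,n}$ satisfying $\gamma_{l,n}K_{\nu,n}v=\delta_{l\nu}v$ with the stated boundedness. But your dismissal of the polynomial-multiplication ansatz is mistaken, and the alternative you propose would not work.

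The paper's construction (carried out for $k=1$, with $k=n$ mutatis mutandis) \emph{is} the polynomial ansatz, done at the dyadic level. One first replaces $\psi$ by a $\psi\in\cfi C_0^\infty(\,]1,2[\,)$ satisfying $\psi(0)=1$ and $\psi^{(i)}(0)=0$ for $1\le i\le m$; then $\psi_\nu(t)=(\nu!)^{-1}t^\nu\psi(t)$ satisfies $\gamma_{l,n}\psi_\nu=\delta_{l\nu}$ outright, so \emph{no} extra cutoff $\phi$ and \emph{no} triangular Gram--Schmidt--type correction is needed. The crucial computation you missed is the dyadic rescaling: writing
\begin{equation}
K_{\nu,n}v(x)=\sum_{j=0}^\infty 2^{-ja_n\nu}\psi_\nu(2^{ja_n}x_n)v_j(x')
=\frac{x_n^\nu}{\nu!}\sum_{j=0}^\infty \psi(2^{ja_n}x_n)v_j(x')
=\frac{x_n^\nu}{\nu!}\,K_nv(x),
\end{equation}
one sees that the polynomial factor, applied term by term to the Fourier-localised blocks $\psi(2^{ja_n}\cdot)v_j$, produces exactly the weight $2^{-ja_n\nu}$, which shifts the admissible smoothness index down by $\nu a_n$ — i.e.\ it \emph{does} raise anisotropic regularity by the required amount. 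The Fourier support of $\psi_\nu(2^{ja_n}\cdot)$ is the same corona as for $\psi(2^{ja_n}\cdot)$, so Lemma~\ref{Fcor-lem} applies unchanged; inserting $2^{-ja_n\nu}$ into the $K_n$-estimate gives $K_{\nu,n}\colon B^{s-\nu a_n-a_n/p_n,a'}_{p',p_n}(\Rnl)\to\fspq(\Rn)$ for all $s$ and $q$. Your proposed fix — inserting a tangential Fourier weight $r_j(\xi')$ of anisotropic order $ja_n$ — would multiply the $l$-th block by roughly $2^{+la_nj}$, which is the wrong sign: it would lower, not raise, the reachable $\fspq$-smoothness. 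So the "naive" ansatz is not only salvageable but is precisely the route the paper takes, whereas your replacement for it would not close the argument.
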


\subsection{Remarks on the borderlines}
As illustrated in Figure~\ref{borderline_1-fig}, the mixed-norm spaces
$\fspq$ give borderline phenomena differing a
good deal from the well-known isotropic, unmixed $L_p$-theory (we take $\vec
a=(1,\dots,1)$ for simplicity): 
as a similarity $q$ plays no role, so we take $q=2$; then the spaces reduce 
to Sobolev spaces $H^s_{\vec p}=F^s_{\vec p,2}$ when $1<p_k<\infty$ 
for all $k$. Moreover, beginning with $\gamma_{0,1}$,
it is by \eqref{spq-cnd} of Theorem~\ref{main1-thm}
necessary that $s\ge 1/p_1$,
with $s=1/p_1$ being possible only for
$p_1\le1$. This requires in addition that
\begin{equation}
  \sum_{k>1}(\frac{1}{p_k}-1)_+=0,
\end{equation}
hence $p_k\ge1$ for all $k\ge2$.
However, $p_1\le1$ excludes the identification with a Sobolev space
(but every $u$ in $F^s_{\vec p,2}(\Rn)$ is then at least a  
continuous function of $x_1$ valued in the Banach space $L_{p''}(\Rnl)$). 

\begin{figure}[htbp]
\setlength{\unitlength}{0.00035in}
\begingroup\makeatletter\ifx\SetFigFont\undefined%
\gdef\SetFigFont#1#2#3#4#5{%
  \reset@font\fontsize{#1}{#2pt}%
  \fontfamily{#3}\fontseries{#4}\fontshape{#5}%
  \selectfont}%
\fi\endgroup%
{\renewcommand{\dashlinestretch}{30}
\hspace{1550\unitlength}
\begin{picture}(8800,5000)(-500,-10)
\path(8712,2412)(5112,1512)
\dashline{60.000}(5112,1512)(312,312)
\thicklines
\path(12,312)(8712,312)
\path(8592.000,282.000)(8712.000,312.000)(8592.000,342.000)
\put(8750,312){\makebox(0,0)[lc]{$\phantom{\Big|}\frac{a_1}{p_1}$}}
\path(312,12)(312,4312)
\path(342.000,4192.000)(312.000,4312.000)(282.000,4192.000)
\put(312,4380){\makebox(0,0)[cb]{$s$}}
\thinlines
\path(5112,462)(5112,162)
\put(5112,50){\makebox(0,0)[lc]{$\textstyle{a_1}$}}
\path(462,1712)(162,1712)
\put(150,1712){\makebox(0,0)[rc]{$\sum(\frac{a_k}{p_k}-a_k)_+$}}
\path(312,1712)(8712,3812)  % the upper line
\end{picture}
}

\caption{The $\gamma_{0,1}$-borderlines 
for $s$, for different values of $p''$;
dashes indicate that $s$ must be strictly larger 
than at the borderline}
  \label{borderline_1-fig}
\end{figure}

When $\sum_{k>1}(\tfrac1{p_k}-1)_+>0$, i.e.\ at least one $p_k<1$
there is a marked difference to the
non-mixed case because the borderline is displaced upwards,
cf.~Figure~\ref{borderline_1-fig}.  
This is not unnatural, though, since there is a Sobolev
embedding, with $r_k=\max(1,p_k)$ for $k>1$, 
\begin{equation}
  F^s_{\vec p,2}(\Rn)\imb F^{\tfrac{1}{p_1}}_{(p_1,r''),2}(\Rn)
  \quad\text{for}\quad s=\tfrac{1}{p_1}+\sum_{k>1}(\tfrac{1}{p_k}-1)_+,
  \label{FF-eq}
\end{equation}
where the last space is located at the borderline for the Banach space case.
For $p_1\le1$ it is therefore clear that $\gamma_{0,1}$ is defined on 
$F^s_{\vec p,2}$, 
whereas for $p_1>1$ this might look contradictory.
But the meaning of Theorem~\ref{main1-thm} is that the subspace 
to the left in \eqref{FF-eq} is \emph{barely} small
enough to be in the domain of $\gamma_{0,1}$, even for $p_1>1$ 
(cf.\ the proof, where \eqref{FF-eq} 
is sharpened by a precise application of the vector-valued Nikol$'$skij 
inequality, cf.~\eqref{vNPP-ineq} below,
that allows a decisive shift to a sum exponent $q\le1$).

\subsection{The working definition of the trace}
For an overview of the methods, it is noted that we work with a 
quasi-homogeneous Littlewood--Paley decomposition $1=\sum_{j=0}^\infty
\Phi_j$ such that, for $j\ge1$,
\begin{equation}
\xi\in\supp\Phi_j\implies 2^{j-1}\le |\xi|_{\vec a}\le 2^{j+1}.
\end{equation}
Hereby $|\cdot|_{\vec a}$ stands for a quasi-homogeneous distance function,
with level sets given by $n$-dimensional ellipsoids of varying 
eccentricity; cf.\ Section~\ref{notation-ssect} for details. 

Decomposing $u=\sum \Phi_j(D)u$ there is an obvious candidate for the trace, 
say $\gamma_{0,1}$, for since the $\Phi_j(D)u$ are $C^\infty$-functions by 
the Paley--Wiener--Schwartz theorem, one can set
\begin{equation}
  \tilde\gamma_{0,1}u= \sum_{j=0}^\infty \Phi_j(D)u\bigm|_{x_1=0}.
  \label{work-defn}
\end{equation}
We adopt this as a working definition for $\gamma_{0,1}$. In fact, the proof
of \eqref{spq-cnd}$\implies$\eqref{cd'-cnd} in Theorem~\ref{main1-thm} 
shows that under the condition \eqref{spq-cnd}, the series
in \eqref{work-defn} converges in $L_{r''}$. 
But as the value $x_1=0$ does not play a special role,
a further argument yields
$\fspq(\Rn)\imb C_{\op{b}}(\R,L_{r''}(\Rnl))$.
The argument also shows that 
$\tilde\gamma_{0,1}$ is a map $\fspq(\Rn)\to\cd'(\Rnl)$
that is a 
\emph{restriction} of the distributional trace $\gamma_{0,1}$. 

Similar remarks apply to the outer trace $\gamma_{0,n}$.

\begin{rem}
Nikol$'$skij \cite{Nik75} assigned a trace on e.g.\ $\{x_n=0\}$ to any 
$f(x',x_n)$ behaving as an $L_{p'}$-function in $x'$ and
depending continously (near $x_n=0$) on the parameter $x_n$, 
i.e.\ to any $f$ in $C(\R,L_{p'}(\Rnl))$. The trace is of course defined on
the larger space $C(\R,\cd'(\Rnl))$, but by Theorems~\ref{main1-thm} and
\ref{mainn-thm}, the $\fspq$ that admit traces are regular enough to 
fulfil Nikol$'$skij's requirement, at least when the components of 
$r'$ or $r''$ are equal.
\end{rem}

\subsection{Anisotropic Sobolev spaces}

For comparison's sake,
we collect the relation to the anisotropic counterparts of the well-known 
Bessel potential and Sobolev spaces. 
For brevity, e.g.\ $1<\vec p<\infty $
means that $1<p_k<\infty $ for all $k=1,\dots ,n$.

\begin{prop}\label{Littlewood}
Let $1 < \vec{p} < \infty$ and $s \in \R$ be arbitrary.
\begin{itemize}
  \item[{\rm (i)}] Then $F^{s,\vec{a}}_{\vec{p},2} (\Rn) = \hsp (\Rn)$  
where $H^{s,\vec a}_{\vec p}$ consists
of the $u\in\cs'(\Rn)$ for which
\be
 \Norm{\cfi \big[ 
 (1+|\xi|_{\vec a}^2 )^{s/2} \, \cf u \big](\, \cdot\,) 
  }{\lpvec(\Rn)}
< \infty. 
\ee
  \item[{\rm (ii)}] When $m_k=\tfrac{s}{a_k}\in \N_0$  for each  
$k=1, \ldots , n$, 
then $F^{s,\vec{a}}_{\vec{p},2} (\Rn) = \wmp (\Rn)$ for 
$\vec m=(m_1,\dots,m_n)$, 
where $\wmp$ consists of the $u\in\cs'(\Rn)$ such that
\be
  \Norm{u}{\lpvec(\Rn)} +\sum_{i=1}^n 
 \Norm{\frac{\partial^{m_i} u}{\partial x_i^{m_i}}}{\lpvec(\Rn)}
  < \infty .
\ee
\end{itemize}
In both cases the norms are equivalent to that of $\fspq$.
\end{prop}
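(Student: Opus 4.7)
My plan for Proposition~\ref{Littlewood} is to reduce both statements to two already-prepared ingredients: (a) a Fourier multiplier assertion for symbols of anisotropic Mihlin type acting boundedly on mixed-norm $L_{\vec p}$ (hence on the scale $\fspq$ by the dyadic corona/ball criteria), and (b) the Littlewood--Paley identity $F^{0,\vec a}_{\vec p,2}(\Rn)=\lpvec(\Rn)$ for $1<\vec p<\infty$, which follows from the vector-valued Fefferman--Stein maximal inequality in mixed norms (the Bagby-type estimate cited in the introduction).

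For part (i), I would introduce the lift $\Xi^t u:=\cfi[(1+|\xi|_{\vec a}^2)^{t/2}\cf u]$ and verify by a routine chain-rule computation that the symbol satisfies anisotropic Mihlin bounds of every order. Combined with (a), this yields that $\Xi^t$ is an isomorphism $\fspq(\Rn)\to F^{s-t,\vec a}_{\vec p,q}(\Rn)$ for every $s,t\in\R$ and the same on the Bessel-potential scale $\hsp(\Rn)\to H^{s-t,\vec a}_{\vec p}(\Rn)$. Taking $t=s$ reduces the claim $\fspq=\hsp$ to the case $s=0$, which is exactly (b).

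For part (ii), once (i) is in hand the assertion becomes $H^{s,\vec a}_{\vec p}(\Rn)=\wmp(\Rn)$ whenever $m_k=s/a_k\in\N_0$. Here I would compare the two equivalent norms via the two-sided pointwise bound
\begin{equation}
  c\bigl(1+|\xi|_{\vec a}^2\bigr)^{s/2}\le 1+\sum_{k=1}^n|\xi_k|^{m_k}\le C\bigl(1+|\xi|_{\vec a}^2\bigr)^{s/2},
\end{equation}
valid because $m_ka_k=s$ forces each $|\xi_k|^{m_k}$ to be quasi-homogeneous of degree $s$ with respect to the dilation $\xi_k\mapsto t^{a_k}\xi_k$. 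Forming the quotients of these symbols and checking Mihlin estimates (a standard computation using that each factor is smooth away from the origin and homogeneous in the quasi-norm) provides, via (a), bounded Fourier multipliers in both directions between $\nrm{u}{\hsp}$ and $\nrm{u}{\lpvec}+\sum_k\nrm{\partial^{m_k}_{x_k}u}{\lpvec}$. Thus the two norms are equivalent, proving (ii).

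The main obstacle is ingredient (a) for mixed-norm $\lpvec$ and the subsequent transfer to $\fspq$. Pure iteration of the classical Mihlin theorem variable-by-variable is delicate because the $\vec a$-quasi-homogeneous symbols couple the variables; the right viewpoint, already set up in the paper via Marschall's pseudodifferential inequality and the vector-valued Nikol$'$skij inequality \eqref{vNPP-ineq}, is to estimate $\Phi_j(D)$-pieces by maximal functions and then sum using the $\ell^2$-valued Fefferman--Stein inequality. Once these are available for mixed norms, both the lift in (i) and the symbol-ratio multipliers in (ii) are handled by the same routine.
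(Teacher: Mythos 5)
Your overall plan --- lift to reduce to $s=0$, establish the Littlewood--Paley identity there, and for (ii) compare $\hsp$ with $\wmp$ via symbol quotients --- is structurally the same as the paper's. The paper reduces via the lift property in Proposition~\ref{lift} (proved with Marschall's inequality, which does the job you assign to ``anisotropic Mihlin bounds'') and then invokes \eqref{lwp-ineq} for $s=0$, while for (ii) it simply cites Lizorkin. So the skeleton of your argument is sound, and in fact you give a more self-contained account of part (ii) than the paper does.

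However, there is a genuine gap in your ingredient (b). The identity $F^{0,\vec a}_{\vec p,2}(\Rn)=\lpvec(\Rn)$ does \emph{not} follow from the vector-valued Bagby/Fefferman--Stein maximal inequality alone. That inequality \eqref{bagby} bounds $\Norm{M_k u_j}{\lpvec(\ell_q)}$ by $\Norm{u_j}{\lpvec(\ell_q)}$ for a given sequence $(u_j)$, and via Proposition~\ref{max*M-prop} it controls the Peetre maximal functions; but the nontrivial direction of \eqref{lwp-ineq}, namely
\begin{equation}
  \Norm{\big(\sum_{j=0}^\infty|\cfi[\Phi_j\cf u]|^2\big)^{1/2}}{\lpvec}\le c\,\Norm{u}{\lpvec},
\end{equation}
has $u$ on the right rather than a sequence, and a termwise maximal-function bound $|\Phi_j(D)u|\le cMu$ gives only a divergent square function. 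To obtain this estimate one needs either randomization (Khintchine) coupled with a Mihlin-type Fourier multiplier theorem on $\lpvec$, or a Calder\'on--Zygmund singular-integral argument. The paper deduces the right-hand inequality of \eqref{lwp-ineq} from Kr\'ee's theorem \cite[Th.~4]{Kr67} (a mixed-norm multiplier result) and the left-hand inequality from completeness of $\lpvec$ and duality. You should replace the appeal to the maximal inequality at this step by Kr\'ee's theorem or an equivalent anisotropic Mihlin theorem for $\lpvec$.

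For part (ii), your two-sided pointwise bound on the symbols is correct (in the nondegenerate case $s>0$ the upper bound uses \eqref{ax-ineq} and the power-mean inequality, and for $s=0$ it is trivial), and reducing the identification $\hsp=\wmp$ to Mihlin estimates on the quotient symbols is exactly Lizorkin's argument \cite[Thm.~3]{Liz70}, which the paper cites rather than reproves. The same caveat as above applies: verifying Mihlin conditions on the quotient symbols is only useful once a mixed-norm multiplier theorem (Kr\'ee's, or Proposition~\ref{Marschall} on dyadic pieces combined with the corona criterion) is in place to convert those estimates into boundedness on $\lpvec$.
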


The essential part of this result goes back to Lizorkin \cite{Liz70},
who introduced and discussed the above spaces.

\bigskip

%\subsection{Results for $W$-spaces}
Conversely to Proposition~\ref{Littlewood},
one often needs to identify a given Sobolev space $\wmp$ with a
Lizorkin--Triebel space. While this can be done in many ways, 
we first recall
the convention, preferred in the Russian school, e.g.~\cite{BIN78,Liz70},
of taking the smoothness $s$ as the harmonic mean of the given
orders,
\begin{equation}
  \frac{1}{s}=\frac{1}{n}(\frac{1}{m_1}+\dots+\frac{1}{m_n}).
  \label{Lconv-eq}
\end{equation}
Then, by setting $a_k=s/m_k$ for $k=1,\dots,n$, 
Proposition~\ref{Littlewood} clearly gives
\begin{equation}
  \wmp(\Rn)=F^{s,\vec a}_{\vec p,2}(\Rn),
  \quad\text{and}\quad a_1+\dots+a_n=|\vec a|=n.
  \label{Wan-eq}
\end{equation}
This yields the following trace results for Sobolev spaces.

\begin{prop}
Let $\vec m=(m_1,\dots,m_n)\in \N_0^n$ and
$1<p_k<\infty$ for $k=1,\dots,n$, and define $s$ by \eqref{Lconv-eq}
and $a_k=s/m_k$ for all $k$. Then there are bounded surjections
\begin{align}
  \gamma_{0,1}&\colon \wmp(\Rn)\to F^{s-\frac{a_1}{p_1},a''}_{p'',p_1}(\Rnl)
  \quad\text{for}\quad m_1>\frac1{p_1},
\\
  \gamma_{0,n}&\colon \wmp(\Rn)\to B^{s-\frac{a_n}{p_n},a'}_{p',p_n}(\Rnl)
  \quad\text{for}\quad m_n>\frac1{p_n}.
\end{align}
\end{prop}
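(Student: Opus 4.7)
The plan is to reduce the statement to a direct application of Theorems~\ref{main1-thm'} and \ref{mainn-thm'}, using Proposition~\ref{Littlewood} to translate the Sobolev space into a mixed-norm Lizorkin--Triebel space with exponent $q=2$ in which the earlier trace theorems apply.

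First, I would record the identification. By construction $s$ and the anisotropy $\vec a=(a_1,\dots,a_n)$ with $a_k=s/m_k$ satisfy $m_k=s/a_k\in \N_0$. Since $1<\vec p<\infty$, Proposition~\ref{Littlewood}(ii) gives the equivalence of norms
\begin{equation}
  \wmp(\Rn)=F^{s,\vec a}_{\vec p,2}(\Rn),
\end{equation}
as already recorded in \eqref{Wan-eq}. Hence every statement about $\gamma_{0,1}$ or $\gamma_{0,n}$ on $\wmp$ will follow from the corresponding assertion for $F^{s,\vec a}_{\vec p,2}$.

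For $\gamma_{0,1}$ I would invoke Theorem~\ref{main1-thm'} with the chosen $\vec a$, $\vec p$ and $q=2$. The hypothesis \eqref{spq-eq} requires
\begin{equation}
  s>\frac{a_1}{p_1}+\sum_{k\ge 2}\Big(\frac{a_k}{\min(1,p_2,\dots,p_k,2)}-a_k\Big).
\end{equation}
Since $p_k>1$ for every $k$ and $q=2>1$, each denominator equals $1$, so the sum collapses to zero. The condition therefore reduces to $s>a_1/p_1$, and using $a_1=s/m_1$ this is exactly $m_1>1/p_1$. Theorem~\ref{main1-thm'} then delivers the bounded surjection $\gamma_{0,1}\colon \wmp(\Rn)\to F^{s-a_1/p_1,a''}_{p'',p_1}(\Rnl)$.

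For $\gamma_{0,n}$ I would argue symmetrically with Theorem~\ref{mainn-thm'}: the hypothesis \eqref{spqn-cnd} becomes
\begin{equation}
  s>\frac{a_n}{p_n}+\sum_{k<n}\Big(\frac{a_k}{\min(1,p_1,\dots,p_k)}-a_k\Big),
\end{equation}
and again every $\min(1,p_1,\dots,p_k)=1$ since all $p_k>1$, so the tail vanishes and the condition reduces to $s>a_n/p_n$, i.e.\ $m_n>1/p_n$. The theorem then yields the bounded surjection onto $B^{s-a_n/p_n,a'}_{p',p_n}(\Rnl)$. No step presents a genuine obstacle; the only point worth underlining is that the Banach-space assumption $1<\vec p<\infty$ is exactly what makes the otherwise delicate inner sums in \eqref{spq-eq} and \eqref{spqn-cnd} collapse, so that the borderline for $\fspq$-traces coincides with the classical Sobolev borderline $m_k>1/p_k$.
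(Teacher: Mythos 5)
Your proof is correct and follows exactly the route the paper intends: identify $\wmp=F^{s,\vec a}_{\vec p,2}$ via Proposition~\ref{Littlewood}(ii) (recorded as \eqref{Wan-eq}), then note that with $q=2$ and all $p_k>1$ the sums in \eqref{spq-eq} and \eqref{spqn-cnd} vanish, so Theorems~\ref{main1-thm'} and \ref{mainn-thm'} apply under precisely $m_k>1/p_k$. The paper does not write this out but signals it with the phrase "This yields the following trace results."
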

Note that substitution of e.g.\  $a_1=s/m_1$ entails
$s-\tfrac{a_1}{p_1}=s(1-\tfrac{1}{m_1p_1})$, where
the last expression is used by some authors.

However, as an alternative to \eqref{Lconv-eq}-\eqref{Wan-eq}, 
there is also an identification 
\begin{equation}
 \wmp(\Rn)= F^{s,\vec a}_{\vec p,2}(\Rn)  
  \quad\text{with}\quad 
  s=\max(m_1,\dots,m_n).
  \label{smax-eq}
\end{equation} 
Indeed,
it is verified in Lemma~\ref{Flambda-lem} below that 
$\fspq=F^{\lambda s,\lambda\vec a}_{\vec p,q}$ with equivalent quasi-norms,
for every $\lambda>0$.
So \eqref{smax-eq} follows from \eqref{Wan-eq} for 
$\lambda=\tfrac{1}{n}(\tfrac{1}{m_1}+\dots+\tfrac{1}{m_n})
\max(m_1,\dots,m_n)$.
Then the weigths in \eqref{smax-eq} fulfill
\begin{equation}
  a_k=\frac{1}{m_k}\max(m_1,\dots,m_n) \quad\text{for } k=1,\dots,n;
\qquad
  \min(a_1,\dots,a_n)=1.
  \label{sak-eq}
\end{equation}
In particular this gives the normalisation $\min(a_1,\dots,a_n)=1$, 
instead of $|\vec a|=n$.

Another virtue of \eqref{smax-eq}--\eqref{sak-eq} 
is that every $m_k\in [0,s]$. 
Moreover, in \eqref{WF-eq} the space
$W^{2,1}_{\vec p}(\Rn\times\R)$ stands for $W^{2,\dots,2,1}_{\vec
p}(\Rn\times\R)$, so \eqref{sak-eq} clearly gives 
$\vec a=(1,\dots,1,2)$; cf.~\eqref{WF-eq}.

We prefer to adopt the convention that $\min(a_1,\dots,a_n)=1$ in the proofs, 
since it makes
some estimates simpler and gives direct reference to e.g.\ 
\cite{Y1,JJ96ell,FaJoSi00,JoSi}.

\begin{rem}[related work]
Traces of mixed norm Sobolev spaces $\wmp$ were covered by 
Bugrov \cite{Bug71}. 
In a series of papers \cite{Ber84,Ber85,Ber87I,Ber87II} Berkolaiko proved
Theorems~\ref{main1-thm'}--\ref{mainn-thm'} with all $p_k$ and $q$ 
in $\,]1,\infty[\,$. He also obtained the condition
$s>\frac{a_k}{p_k}$ for these cases 
(whereas corrections for $0<p_k<1$ can be found in the present paper).

Moreover, Berkolaiko showed that for $k=2,\dots,n-1$ the ranges of 
$\gamma_{0,k}$ are given neither by Besov nor Lizorkin--Triebel spaces; 
instead the relevant norms will have the discrete $\ell_q$-norm 
`replacing' that of $L_{p_k}$ (as is shown here for $k=1$ and $k=n$). 
We have refrained from going into this,
since $\gamma_{0,1}$ and $\gamma_{0,n}$ should suffice for 
most parabolic problems.

It was seemingly first realised by Weidemaier \cite{Wei98}
that it is relevant for the fine theory of parabolic problems 
to have Lizorkin--Triebel spaces as trace spaces.
Among the other works on this application we can mention
\cite{DeHiPr,Wei02,Wei05}.
\end{rem}

%%%%%%%%%%%%%%%%%%%%%%%%%%%%%%%%%%%%%%%%%%%%%
%% Section 3
%%%%%%%%%%%%%%%%%%%%%%%%%%%%%%%%%%%%%%%%%%%%%

\section{Lizorkin--Triebel spaces $\fspq$ based on mixed norms}
  \label{mixd-sect}
%&&&&&&&&&&&&&&&&&&&&&&&&&&&&&&&&&&&&&&&&&&&&&&&&&&&&&&&&&&&&&&&&&&

\subsection{Notation and preliminaries}
  \label{notation-ssect}
%&&&&&&&&&&&&&&&&&&&&&&&&&&&&&&&&&&&&&&&&&&&&&&&&&&&&&&&&&&&&&&&&&&

For a given $\vec{p}=(p_1,\dots, p_n)$ with $p_k \in
\,]0,\infty]$, $k=1, \ldots , n$,
we denote by $L_{\vec{p}}(\Rn)$ the set of all equivalence classes of
measurable functions $u: \Rn \to \C$ such that 
\be
\Norm{u}{L_{\vec{p}}(\Rn)}:= 
\bigg(\int_{\R}  \ldots \bigg(  \int_{\R} 
\bigg( \int_{\R} |u(x_1,\ldots ,x_n)|^{p_1} \,dx_1
\bigg)^{\frac{p_2}{p_1}} \, dx_2 \bigg)^{\frac{p_3}{p_2}} 
\ldots  
\,dx_n \bigg)^{\frac1{p_n}} 
\ee
is finite (modification if some of the $p_i$ are
equal to $\infty$).
With this quasi-norm $L_{\vec{p}}(\Rn)$ is complete,
and a Banach space if $\min (p_1, \ldots , p_n) \ge 1$.
Furthermore, for $0 < q \le \infty$, we shall use the abbreviation
$\lpvec (\ell_q)(\Rn)$ for the set of all sequences 
$(u_k)_{k\in\N_0}$, also written as $\{u_k\}_{k=0}^\infty$, of measurable
functions $u_k\colon \Rn \to \C$ such that (with $\sup_k$ for $q=\infty$)
\begin{equation}
 \Norm{\{u_k\}_{k=0}^\infty }{ \lpvec (\ell_q)(\Rn)} :=
\bigg\| \, \Big(\sum_{k=0}^\infty |u_k|^q
\Big)^{1/q}\bigg|\lpvec (\Rn)\bigg\| < \infty .
\end{equation}
For brevity 
$\Norm{u_k}{\lpvec (\ell_q)}$ may replace 
$\Norm{ \{u_k\}_{k=0}^\infty}{\lpvec (\ell_q)(\Rn)}$. 
If $\max (p_1, \ldots, p_n, q)<\infty$, then
sequences $\{u_k\}_{k=0}^\infty$ from
$C_0^\infty$ are dense in $\lpvec (\ell_q)(\Rn)$. 
$L_{\vec p}$ was studied by Benedek and Panzone
\cite{BePa61}.

In general we adopt standard notation from distribution theory.
E.g.\ $\cd'(\Rn)$ stands for the space of distributions on $\Rn$, while
$\cs'(\Rn)$ is the subspace of tempered distributions.
The Fourier transformation is denoted by $\cf u=\hat u$, 
where $\cf u(\xi)=\int_{\Rn} e^{-\im x\cdot\xi}u(x)\,dx$ for 
$u\in \cs(\Rn)$ with $\cs(\Rn)$ being the Schwartz space of rapidly
decreasing $C^\infty$-functions on $\Rn$.

On $\Rn$ we use an anisotropic distance function $|\cdot|_{\vec a}$ 
of a quasi-homogeneous type, when $\vec a=(a_1,\dots,a_n)$ is fixed in
$\,]0,\infty[\,^n$ (cf.\ Remark~\ref{veca-rem}).
First $\vec a$ is used for the quasi-homogeneous dilation 
$t^{\vec a}x:=(t^{a_1}x_1,\dots,t^{a_n}x_n)$ for $t\ge0$, and 
$t^{s\vec a}x:=(t^s)^{\vec a}x$ for $s\in \R$, whence
$t^{-\vec a}x=(t^{-1})^{\vec a}x$. Then
$|x|_{\vec a}$ is the unique $t>0$ such
that $t^{-\vec a}x\in S^{n-1}$ ($|0|_{\vec a}=0$), i.e.\
\begin{equation}
  \tfrac{x_1^2}{t^{2a_1}}+\dots+  \tfrac{x_n^2}{t^{2a_n}}=1.
  \label{tax-eq}
\end{equation}
It is seen directly that $|t^{\vec a}x|_{\vec a}=t|x|_{\vec a}$, 
so $|\cdot|_{\vec a}$ is not a norm for $\vec a\ne(1,\dots,1)$, but one has
\begin{gather}
  |x+y|_{\vec a}\le |x|_{\vec a}+|y|_{\vec a}.
  \label{atriangle-ineq}
\\
  \max(|x_1|^{1/a_1},\dots,|x_n|^{1/a_n})
  \le |x|_{\vec a}\le |x_1|^{1/a_1}+\dots+|x_n|^{1/a_n}.
  \label{ax-ineq}
\end{gather}
We set
$B_{\vec a}(x,R):= \{\,y \mid |x-y|_{\vec a} \le R\,\}$. A review of
$|\cdot|_{\vec a}$ can be found in \cite{JoSi,Y1}.

Along with $|\cdot|_{\vec a}$, a quasi-homogeneous
Littlewood--Paley decomposition $1=\sum\Phi_j$ will be chosen 
as follows:
based on some $\psi \in C^\infty(\R)$ such that
$0 \le \psi (t) \le 1$ for all $t$,
$\psi (t) =1$ if $\le 11/10$, and $\psi (t) =0 $ if $t>13/10$,
we set $\Psi_j(\xi):=\psi(2^{-j}|\xi|_{\vec a})$ for $j\in\N_0$ 
($\Psi_j\equiv0$ for $j<0$)
so that $\Phi_j:=\Psi_j-\Psi_{j-1}$ gives 
$1=\sum_{j=0}^\infty\Phi_j(\xi)$ for all $\xi\in\Rn$.
Clearly  
\begin{equation}
  \supp\Phi_j\subset\{\, \xi\mid \tfrac{11}{20}2^j\le |\xi|_{\vec a}
  \le \tfrac{13}{10}2^j\,\}.
\end{equation}
This choice is indicated by the uppercase letters $\Psi$, $\Phi$ throughout.
Whenever $1<p_k<\infty$ for $k=1,\dots,n$, then a
Littlewood--Paley inequality holds for all $u\in  L_{\vec p}(\Rn)$:
\begin{equation}
  c_1\Norm{u}{L_{\vec p}}\le
  \Norm{ (\sum_{j=0}^\infty |\cfi[\Phi_j\cf u]|^2)^{\frac{1}{2}}}{L_{\vec p}}
  \le c_2\Norm{u}{L_{\vec p}}.
  \label{lwp-ineq}
\end{equation}
In fact the right-hand side inequality follows directly from a theorem of
Kr\'ee \cite[Th.~4]{Kr67}; then the inequality to the left is obtained from the
completeness of $L_{\vec p}$ and duality (cf.\ a similar proof in 
\cite[Prop.~3.3]{Y86}).

%&&&&&&&&&&&&&&&&&&&&&&&&&&&&&&&&&&&&&&&&&&&&&&&&&&&&&&&&&&&&&&&&&&

\subsection{Lizorkin--Triebel spaces with mixed norms}

%&&&&&&&&&&&&&&&&&&&&&&&&&&&&&&&&&&&&&&&&&&&&&&&&&&&&&&&&&&&&&&&&&&

Let  $\Phi_j$, $j \in \N_0$, be our anisotropic dyadic decomposition of unity.

\begin{defn}   \label{fspq-defn}
Let $0<p_1, \ldots , p_n < \infty$, $s\in \R$, and $0 < q \le \infty$.
Then the quasi-homogeneous \emph{mixed-norm} Lizorkin--Triebel space 
$\fspq (\Rn)$ is the set of $u\in\cs'(\Rn)$ such that
\begin{equation*}
\Norm{u}{\fspq} := 
\bigg\| \bigg(\sum_{j=0}^\infty 2^{jsq} |\cfi [\Phi_j 
\cf u] (\cdot)|^q \bigg)^{\frac1q} \bigg| L_{\vec{p}}(\Rn)
\bigg\| < \infty.
  \end{equation*}
\end{defn}

The $\fspq (\Rn)$
are quasi-Banach spaces, and Banach spaces if $p_1,\ldots, p_n,q$ all 
belong to $[1,\infty]$. Instead of the quasi-triangle inequality, 
it is useful that for all $u$, $v \in \fspq (\Rn)$ the number 
$\tau=\min(1,p_1,\dots, p_n,q)$ gives rise to the estimate
\begin{equation}
 \norm{u+v}{\fspq}^\tau \le 
 \norm{u}{\fspq}^\tau +\norm{v}{\fspq}^\tau . 
 \label{tau-eq}
\end{equation}
Up to equivalent quasi-norms, the spaces 
$\fspq (\Rn)$ do not depend on the chosen anisotropic 
dyadic decomposition of unity 
For brevity $\cal F^{-1}(\Phi_j\cal Fu)$ is often written as
$\Phi_j(D)u$.

We shall also need the corresponding Besov spaces. 
They have properties like the above-mentioned for the $\fspq$, 
so we just give the definition. 

\begin{defn}
For $0<p_1, \ldots , p_n,q \le \infty$ and $s\in \R$
the quasi-homogeneous \emph{mixed-norm} Besov space 
$\bspq (\Rn)$ consists of all $u\in\cs'(\Rn)$ such that
\begin{equation*}
  \Norm{u}{\bspq} := 
  \bigg(\sum_{j=0}^\infty 2^{jsq} \norm{\cfi (\Phi_j  
      \cf u)}{\lpvec (\Rn)}^q \bigg)^{\tfrac 1q} 
   < \infty.
\end{equation*}
\end{defn}

\begin{prop}   \label{FBtrans-prop}
$\fspq(\Rn)$ is translation invariant; and
for $q<\infty$ and every $u\in \fspq(\Rn)$, the translations 
$\tau_h u:=u(\cdot-h) \to u$ in
$\fspq$ for $h\to 0$. Analogously $u\in \bspq(\Rn)$ implies $\tau_hu \in
\bspq(\Rn)$, with $\tau_hu\to u$ when $q$ and all $p_k$ are finite.
\end{prop}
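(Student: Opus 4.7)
The plan is to reduce both assertions to translation invariance and translation continuity of the vector-valued space $L_{\vec p}(\ell_q)$. First I would check the basic commutation: from $\cf(\tau_h u)(\xi)=e^{-\im h\cdot\xi}\cf u(\xi)$ one gets $\Phi_j(D)(\tau_h u)=\tau_h(\Phi_j(D)u)$ for every $j$. Setting $\Psi(x):=\{2^{js}\Phi_j(D)u(x)\}_{j\ge0}$, the sequence that arises when computing $\|\tau_h u\|_{\fspq}$ is therefore $\tau_h\Psi$, and $\|u\|_{\fspq}=\|\Psi\|_{L_{\vec p}(\ell_q)}$. Translation invariance of the iterated Lebesgue norm $L_{\vec p}$ (each single-variable $L_{p_k}$ is translation invariant, and one applies Fubini in order) then yields $\|\tau_h u\|_{\fspq}=\|\Psi\|_{L_{\vec p}(\ell_q)}=\|u\|_{\fspq}$. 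The same computation, but with the outer $\ell_q$-norm and $L_{\vec p}$ in the order used for $\bspq$, gives translation invariance in the Besov case.

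For the $F$-convergence, the identity above rewrites $\|\tau_h u-u\|_{\fspq}=\|\tau_h\Psi-\Psi\|_{L_{\vec p}(\ell_q)}$, so it suffices to prove that translation is strongly continuous on $L_{\vec p}(\ell_q)$ when all $p_k$ and $q$ are finite. I would do this by a standard density argument: by the remark made after the definition of $L_{\vec p}(\ell_q)$, sequences with finitely many nonzero entries, each in $C_0^\infty(\Rn)$, are dense in $L_{\vec p}(\ell_q)$. Given $\varepsilon>0$, choose such an approximant $V=(v_k)$ to $\Psi$; then
\begin{equation*}
\|\tau_h\Psi-\Psi\|_{L_{\vec p}(\ell_q)}\le 2\|V-\Psi\|_{L_{\vec p}(\ell_q)}+\|\tau_hV-V\|_{L_{\vec p}(\ell_q)},
\end{equation*}
where translation invariance handles the first term. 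For the second, the pointwise $\ell_q$-norm is a finite sum in which each $|\tau_hv_k(x)-v_k(x)|$ tends to zero uniformly in $x$ (as $v_k\in C_0^\infty$), with dominant a compactly supported bounded function; hence the $L_{\vec p}$-norm vanishes as $h\to0$ by dominated convergence. This proves $\tau_h u\to u$ in $\fspq$.

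For the Besov case one has $\|\tau_h u-u\|_{\bspq}^{q}=\sum_j 2^{jsq}\|\tau_h\Phi_j(D)u-\Phi_j(D)u\|_{L_{\vec p}}^{q}$ (obvious modification if $q=\infty$, which is excluded here). Each summand is dominated by $2^{q}\cdot2^{jsq}\|\Phi_j(D)u\|_{L_{\vec p}}^{q}$, summable since $u\in\bspq$. For fixed $j$, $\Phi_j(D)u\in L_{\vec p}$ (being bounded by a constant times $\|u\|_{\bspq}$ after extracting the $j$-th term), and $L_{\vec p}$-translation is continuous when every $p_k<\infty$, by the same density argument as above applied in one block; so each summand tends to zero, and dominated convergence in $\ell_q$ finishes the job. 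The main obstacle is really the density statement for $L_{\vec p}(\ell_q)$ on which both continuity arguments rest; once that is taken as granted (as the paper does), the proof is essentially a reduction to translation continuity in an iterated Lebesgue space.
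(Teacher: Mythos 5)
Your proof is correct, and the translation-invariance step (via $\Phi_j(D)\tau_h=\tau_h\Phi_j(D)$ and translation invariance of the iterated $L_{\vec p}$-norm) is exactly the paper's. For the continuity, however, you take a genuinely different route. The paper approximates $u$ by a Schwartz function $\psi$ in $\fspq$ (using the density of $\cs$ in $\fspq$ for $q<\infty$, Lemma~\ref{SFS-lem}), applies translation invariance to the error, and then concludes from $\tau_h\psi\to\psi$ in $\cs$ together with the continuity of the embedding $\cs\imb\fspq$; this is a three-term argument carried out directly at the level of the function space. You instead push the problem down to the sequence space $L_{\vec p}(\ell_q)$: via the identity $\Phi_j(D)\tau_h=\tau_h\Phi_j(D)$, the question becomes strong continuity of translation on $L_{\vec p}(\ell_q)$, which you handle by a density-plus-dominated-convergence argument using the $C_0^\infty$-sequence density stated in the preliminaries. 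Both are sound; your version is more elementary and, notably, avoids a forward reference to the density of $\cs$ in $\fspq$, which the paper establishes only later via the dyadic corona criterion, while the paper's version is shorter and reuses machinery that is needed anyway. One small caveat in your Besov argument: when some $p_k<1$ the constant $2^q$ in your domination should be $(2c)^q$ for the quasi-triangle constant $c$ of $L_{\vec p}$, but this does not affect the dominated-convergence conclusion.
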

\begin{proof}
Since $\Phi_j(D)\tau_h=\tau_h\Phi_j(D)$, the norm of $\fspq$ 
is translation invariant, as that of $L_{\vec p}(\Rn)$ is so. Hence both 
$u$, $\tau_hu$ may be approximated in $\fspq$ to within an $\varepsilon$, 
by choosing a suitable $\psi\in\cs$, when $q<\infty$. 
And $\norm{\tau_h\psi-\psi}{\fspq}\to0$ 
for $h\to0$, because $\tau_h\psi\to\psi$ in $\cs(\Rn)$ and 
the injection $\cs\imb\fspq$ is continuous. (Clearly $\bspq$ can replace
$\fspq$ here.)
\end{proof}

\begin{rem}
For $\vec a=(1,1,\ldots ,1)$ these spaces fit into the general scheme 
developed by Hedberg and Netrusov, cf.~\cite{HeNe}. 
So in the isotropic situation we have a lot of properties at hand for
these classes like characterization by atoms, characterization by 
oscillations (local approximation by polynomials) and characterization
by differences. We envisage
that most of the material presented there has a counterpart for the
anisotropic spaces. 
\end{rem}

%&&&&&&&&&&&&&&&&&&&&&&&&&&&&&&&&&&&&&&&&&&&&&&&&&&&&&&&&&&&&&&&&&&&&

\subsection{Embedding results}

%&&&&&&&&&&&&&&&&&&&&&&&&&&&&&&&&&&&&&&&&&&&&&&&&&&&&&&&&&&&&&&&&&&

For a continuous linear injection of $X$ into $Y$ we throughout write 
$X\imb Y$. A proof of the next result is given further below.

\begin{lem}   \label{SFS-lem}
There are continuous embeddings
\begin{gather}
 \cs (\Rn) \hookrightarrow \fspq (\Rn) \hookrightarrow \cs' (\Rn).  
  \label{SFS'-eq}
    \\
\cs (\Rn) \hookrightarrow \bspq (\Rn) \hookrightarrow \cs' (\Rn).
  \label{SBS'-eq}  
\end{gather}
$\cs(\Rn)$ is dense in $\fspq(\Rn)$ for $q<\infty$, and 
dense in $\bspq(\Rn)$ for $q,p_1,\dots,p_n<\infty$.
\end{lem}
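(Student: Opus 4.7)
The plan is to verify the four embeddings in Lemma~\ref{SFS-lem} separately, treating the density statements last. For the inclusions $\cs(\Rn)\hookrightarrow\fspq(\Rn)$ and $\cs(\Rn)\hookrightarrow\bspq(\Rn)$ I would exploit the rapid decay in $j$ of $\cfi(\Phi_j\cf u)$ arising from $\cf u\in\cs$ together with $\supp\Phi_j\subset\{|\xi|_{\vec a}\ge\tfrac{11}{20}2^j\}$ for $j\ge 1$. Writing $\cf u(\xi)=(1+|\xi|^2_{\vec a})^{-M}g(\xi)$ with $g\in\cs$ and controlling the inverse Fourier transform pointwise, one extracts, for each $N$,
\[
 \Norm{\cfi[\Phi_j\cf u]}{L_{\vec p}(\Rn)}\le C_{N,M}\,2^{-jN}\,p_M(u),
\]
where $p_M$ is a fixed Schwartz seminorm. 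The resulting geometric sums in $j$ --- both with the inner $\ell_q$-weight $2^{jsq}$ and in the Besov form --- are therefore finite and depend continuously on $u$.

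The inclusion $\fspq(\Rn)\hookrightarrow\cs'(\Rn)$ is the main technical point. The starting point is $u=\sum_j\Phi_j(D)u$, valid in $\cs'$ because $\Psi_J\to 1$ with polynomial control. For any $\varphi\in\cs$, almost disjointness of Fourier supports yields
\[
 \dual{u}{\varphi}=\sum_{j=0}^\infty\dual{\Phi_j(D)u}{\widetilde\Phi_j(D)\varphi},
\]
with $\widetilde\Phi_j=\Phi_{j-1}+\Phi_j+\Phi_{j+1}$ identically $1$ on $\supp\Phi_j$. The pointwise inequality $2^{js}|\Phi_j(D)u(x)|\le(\sum_k 2^{ksq}|\Phi_k(D)u(x)|^q)^{1/q}$ combined with monotonicity of $L_{\vec p}$ gives $\Norm{\Phi_j(D)u}{L_{\vec p}}\le 2^{-js}\norm{u}{\fspq}$, and the anisotropic Nikol$'$skij inequality for band-limited functions upgrades this to
\[
 \Norm{\Phi_j(D)u}{L_\infty}\le C\,2^{j(A-s)}\,\norm{u}{\fspq},\qquad A=\sum_{k=1}^n\tfrac{a_k}{p_k}.
\]
On the other side $\Norm{\widetilde\Phi_j(D)\varphi}{L_1}\le C_N\,2^{-jN}\,p_N(\varphi)$ for arbitrary $N$, by the same argument as in the first paragraph. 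Choosing $N>A-s$ makes the series absolutely convergent and delivers $|\dual{u}{\varphi}|\le C_\varphi\norm{u}{\fspq}$ with $C_\varphi$ a Schwartz seminorm of $\varphi$. The main obstacle here is that ordinary Hölder duality fails in the quasi-Banach range $\min(p_k,q)<1$; the $L_\infty$--$L_1$ pairing just described sidesteps that difficulty cleanly.

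For density, the first step is $\Psi_J(D)u\to u$ in $\fspq$ when $q<\infty$: the difference has Littlewood--Paley pieces supported in frequencies $|\xi|_{\vec a}\gtrsim 2^J$, so its quasi-norm reduces to $\Norm{(\sum_{j\ge J}2^{jsq}|\Phi_j(D)u|^q)^{1/q}}{L_{\vec p}}$, which vanishes by dominated convergence inside the $L_{\vec p}(\ell_q)$-norm --- here $q<\infty$ is decisive. The second step approximates each $\Psi_J(D)u$, a smooth function of polynomial growth with fixed compact spectrum, by Schwartz ones: multiplication by a dilated cut-off $\chi(\cdot/R)\in C_0^\infty$ produces elements of $\cs$ converging to $\Psi_J(D)u$ in $\fspq$, since on the corresponding band-limited subspace the $\fspq$-quasi-norm is controlled by an $L_{\vec p}$-norm and $\chi(\cdot/R)f\to f$ in $L_{\vec p}$ whenever all $p_k<\infty$ (a standing hypothesis in the $F$-scale). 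For $\bspq$ the argument is verbatim, except that step two now requires approximation in the $L_{\vec p}$-norm used in the definition of $\bspq$, which enforces the additional assumption $p_1,\dots,p_n<\infty$ for density in the Besov case.
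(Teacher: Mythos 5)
Your density argument has a genuine gap in its second step. You truncate $\Psi_J(D)u$ by a spatial cut-off $\chi(\cdot/R)$ with $\chi\in C_0^\infty(\Rn)$ and then appeal to the principle that on the band-limited subspace the $\fspq$-quasi-norm is equivalent to the $L_{\vec p}$-norm. But $\chi(\cdot/R)\,\Psi_J(D)u$ is \emph{not} band-limited: $\cf\chi$ is Schwartz yet not compactly supported, so the Fourier support of the product spreads over all of $\Rn$. In particular the difference $(1-\chi(\cdot/R))\Psi_J(D)u$ does not lie in the subspace where your norm-comparison is valid, and its $\fspq$-quasi-norm is not controlled by its $L_{\vec p}$-norm. (Pointwise multiplication by $L_\infty$-functions is not bounded on $\fspq$ in the quasi-Banach range, so one cannot bypass the band-limited structure either.) The paper avoids this by multiplying instead by $c\cfi\Psi_0(\varepsilon\,\cdot)$, whose Fourier transform is $c\varepsilon^{-n}\Psi_0(\xi/\varepsilon)$ and therefore \emph{compactly supported}; for small $\varepsilon$ the product $c\cfi\Psi_0(\varepsilon\cdot)\,\Psi_J(D)u$ still has spectrum in a fixed anisotropic ball, so the dyadic corona criterion (Lemma~\ref{Fcor-lem}) converts $L_{\vec p}$-convergence into $\fspq$-convergence. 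Replacing your $\chi$ by such a Schwartz function with compactly supported Fourier transform repairs the step with no other change.

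For the embeddings, the paper simply cites \cite[Prop.~10]{JoSi}, so your direct sketch is a legitimate alternative. The $L_\infty$--$L_1$ pairing with Nikol$'$skij to prove $\fspq\hookrightarrow\cs'$ is sound. One small point in your $\cs\hookrightarrow\fspq$ argument: the factorization $\cf u=(1+|\xi|_{\vec a}^2)^{-M}g$ with $g\in\cs$ is not quite legitimate because $|\xi|_{\vec a}$ is not smooth at the origin; but this is cosmetic, since the rapid decay of $\cf u$ on $\supp\Phi_j$ (where, e.g.\ after normalising $\min a_k=1$, $|\xi|\gtrsim 2^{j}$) already yields the claimed bound $\Norm{\cfi[\Phi_j\cf u]}{L_{\vec p}}\lesssim 2^{-jN}p_M(u)$ by a direct estimate of the convolution kernel.

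Finally, your first density step ($\Psi_J(D)u\to u$ in $\fspq$ via dominated convergence in $L_{\vec p}(\ell_q)$, using $q<\infty$) and your explanation of why $\bspq$-density additionally requires all $p_k<\infty$ both match the paper's reasoning.
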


The definitions at once give 
part \mbox{(i)} of the next result;
and \mbox{(iii)} follows from \mbox{(ii)}, that holds by 
Minkowski's inequality.

\begin{lem}\label{elementary}
When $p_k<\infty$ holds for all $k$ in the $F$-spaces one has:
\begin{rmlist}
\item
For $s' < s$ and $q$, $q'\in\,]0,\infty]$,
\begin{equation}
  \fspq (\Rn)\imb F^{s',\vec{a}}_{\vec{p},q'} (\Rn);
\qquad
  \bspq (\Rn)\imb B^{s',\vec{a}}_{\vec{p},q'} (\Rn).  
\end{equation}
\item 
For $r_1 \le \min (p_1,\ldots , p_n, q)$ and 
$\max (p_1,\ldots, p_n,q)\le r_2$,
\be
\Big(\sum_{j=0}^\infty \norm{u_j}{
  \lpvec (\Rn)}^{r_2}\Big)^{\frac1{r_2}}
 \le \Norm{u_j}{\lpvec (\ell_q)(\Rn)}
 \le   
\Big(\sum_{j=0}^\infty \norm{u_j}{\lpvec(\Rn)}^{r_1}\Big)^{\frac1{r_1}},
\ee
for an arbitrary sequence $(u_j)$ of measurable functions.
\item
With $r_1$ and $r_2$ as in \mbox{\rm (ii)},
  \begin{equation}
    B^{s,\vec{a}}_{\vec{p},r_1} (\Rn) \imb
      \fspq (\R_n) \imb B^{s,\vec{a}}_{\vec{p},r_2} (\Rn).
  \end{equation}
  \end{rmlist}
\end{lem}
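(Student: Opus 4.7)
The plan is to treat the three parts in order, noting that (i) is essentially a weighted sequence inequality, (ii) is the substantive statement, and (iii) falls out of (ii) when applied to the sequence $(2^{js}\cfi[\Phi_j\cf u])_j$. For part~(i) I would reduce to the scalar estimate
\begin{equation*}
\Big(\sum_{j=0}^\infty 2^{js'q'}c_j^{q'}\Big)^{1/q'}
\le C(s-s',q,q')\Big(\sum_{j=0}^\infty 2^{jsq}c_j^{q}\Big)^{1/q}
\qquad (s'<s,\; c_j\ge 0),
\end{equation*}
which follows from the trivial bound $c_j\le A\,2^{-js}$, where $A$ denotes the right-hand side, together with the convergent geometric sum $\sum 2^{-j(s-s')q'}$. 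Applied pointwise in $x$ to $c_j(x)=|\cfi[\Phi_j\cf u](x)|$ and then followed by taking the $L_{\vec p}$-norm this gives the $F$-embedding, while applied to $c_j=\|\cfi[\Phi_j\cf u]\|_{L_{\vec p}(\Rn)}$ it yields the $B$-embedding; obvious modifications cover $q,q'=\infty$.

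For part~(ii), the right-hand inequality uses that $r_1\le q$ implies $(\sum|u_j|^q)^{1/q}\le(\sum|u_j|^{r_1})^{1/r_1}$ pointwise, followed by the homogeneity identity $\|(\sum|u_j|^{r_1})^{1/r_1}\|_{L_{\vec p}}^{r_1}=\|\sum|u_j|^{r_1}\|_{L_{\vec p/r_1}}$. Since $p_k/r_1\ge 1$ for every $k$, the rescaled space $L_{\vec p/r_1}$ is a genuine Banach space, and the triangle inequality bounds the last quantity by $\sum\||u_j|^{r_1}\|_{L_{\vec p/r_1}}=\sum\|u_j\|_{L_{\vec p}}^{r_1}$, as required. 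The left-hand inequality is the formal mirror: $r_2\ge q$ gives $(\sum|u_j|^{r_2})^{1/r_2}\le(\sum|u_j|^q)^{1/q}$ pointwise, and the analogous rescaling reduces the claim to the \emph{reverse} Minkowski inequality
\begin{equation*}
\sum_j\|f_j\|_{L_{\vec p/r_2}(\Rn)}
\le\Nrm{\sum_j f_j}{L_{\vec p/r_2}(\Rn)}
\qquad (f_j\ge 0),
\end{equation*}
in which each exponent $p_k/r_2\in\,]0,1]\,$. This is the main obstacle. The one-dimensional version $\|\sum f_j\|_{L^p(\R)}\ge\sum\|f_j\|_{L^p(\R)}$ for $0<p\le 1$ and $f_j\ge 0$ is classical (extending to countable sums by monotone convergence), and I would obtain the mixed-norm version by iterating it coordinate by coordinate through $x_1,\dots,x_n$, pulling the sum past each nested integral; non-negativity together with monotone convergence make the repeated interchange of sum and iterated integral legitimate.

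For part~(iii), I apply part~(ii) to $u_j:=2^{js}\cfi[\Phi_j\cf u]$: the middle quantity $\|(u_j)\|_{L_{\vec p}(\ell_q)}$ is precisely $\|u\|_{\fspq}$, while the two flanking quantities equal $\|u\|_{B^{s,\vec a}_{\vec p,r_2}}$ and $\|u\|_{B^{s,\vec a}_{\vec p,r_1}}$, yielding both claimed embeddings at once.
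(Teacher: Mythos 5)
Your argument is correct and is essentially the paper's intended route: part (i) is the elementary weighted $\ell_q$-estimate, part (ii) is (direct and reverse) Minkowski after rescaling to $L_{\vec p/r_i}$, and part (iii) is (ii) applied to $2^{js}\cfi[\Phi_j\cf u]$; the paper simply compresses all of this into the phrase ``that holds by Minkowski's inequality.'' Your fleshing out of the reverse Minkowski step for the mixed-norm space $L_{\vec p/r_2}$ with all exponents in $\,]0,1]$, by one-dimensional superadditivity iterated through the coordinates with monotone convergence, is exactly what is needed and is stated correctly.
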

Let $\vec b=(b_1, \ldots , b_n) \in \Rn$ such that $b_k >0$, 
$k=1, \ldots ,n$. As a convenient notation we introduce the cube
\begin{equation}
  Q_{\vec b} := \Big\{ (x_1, \ldots x_n)\bigm| \: |x_k| \le b_k, 
  \: k=1, \ldots, n \Big\}  
  \label{Qb-eq}
\end{equation}
The symbol $x \cdot y$ refers to the scalar product of $x$, $y$ in $\Rn$.
For a vector $\vec{r}$ we shall as a convention set
\begin{equation}
  \tfrac{1}{\vec{r}} = \Big(\tfrac{1}{{r_1}}, 
  \tfrac{1}{{r_2}},\ldots,  \tfrac{1}{{r_n}}\Big).  
\end{equation}
In our proofs the 
vector-valued Nikol$'$skij inequality will play a major role.
This inequality concerns sequences $(f_j)$ in $\cs'(\Rn)$ 
that fulfill a \emph{geometric rectangle condition},
\begin{equation}
  \supp \cf f_j \subset [-AR_1^j,AR_1^j]\times\dots\times[-AR_n^j,AR_n^j].
  \label{vNPP-cnd}
\end{equation}
Here $A>0$ is a constant, while the fixed numbers $R_1$,\dots,$R_n>1$
define the rectangles.

\begin{thm}
  \label{vNPP-thm}
When $0<p_k\le r_k<\infty$ for $k=1,\dots,n$ and $\vec r\ne \vec p$, 
then there is
for $0<q\le\infty$ a number $c>0$ such that
\begin{equation}
  \Norm{(\sum_{j=0}^\infty |f_j(\cdot)|^q)^\frac1q}{L_{\vec r}}
  \le c
  \Norm{\sup_{j\in \N_0} (\prod_{k=1}^nR_k^{j(\frac 1{p_k}-\frac 1{r_k})}
          |f_j(\cdot)|)}{L_{\vec p}}
  \label{vNPP-ineq}
\end{equation}
for all sequences $(f_j)$ in $\cs'(\Rn)$ fulfilling \eqref{vNPP-cnd}.
\end{thm}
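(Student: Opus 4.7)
The plan is to transfer the $L_{\vec r}$-norm on the left to an $L_{\vec p}$-norm on the right by means of the scalar Nikol$'$skij gain at each scale $j$, and to trade the $\ell_q$-sum for the $\ell_\infty$-supremum by exploiting the geometric decay of a weight. Set $\beta_j:=\prod_{k=1}^n R_k^{j(1/p_k-1/r_k)}$ and $G(x):=\sup_j \beta_j |f_j(x)|$. The hypothesis $\vec r\ne\vec p$, combined with $\vec r\ge\vec p$ componentwise and each $R_k>1$, forces $\beta_j\to\infty$ geometrically, hence $\sum_j \beta_j^{-\delta}<\infty$ for every $\delta>0$. The target inequality then reads $\|(\sum_j \beta_j^{-q}(\beta_j|f_j|)^q)^{1/q}\|_{L_{\vec r}}\le c\|G\|_{L_{\vec p}}$.

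The central technical ingredient is the anisotropic Plancherel--Polya/Peetre pointwise inequality. Since $\supp\cf f_j\subset \prod_k[-AR_k^j,AR_k^j]$, for every $0<\sigma\le\min(p_1,\dots,p_n,q)$ and every sufficiently large $N$ one has
\[
  |f_j(x)|^\sigma \le c_N\int_{\Rn}\frac{\prod_k R_k^j\,|f_j(y)|^\sigma}{\prod_k (1+R_k^j|x_k-y_k|)^N}\,dy,
\]
obtained by writing $f_j=f_j*\varphi_j$ for a band-limited reproducing kernel $\varphi_j$ equal to $1$ on $\supp \cf f_j$, rescaling $y_k\mapsto R_k^{-j}y_k$, and invoking Young's inequality in the rescaled variables. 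The right-hand side is dominated by $c\mathcal{M}(|f_j|^\sigma)(x)$, where $\mathcal{M}$ is the anisotropic strong Hardy--Littlewood maximal operator; the choice of $\sigma$ places each $p_k/\sigma$, $r_k/\sigma$, and $q/\sigma$ strictly above $1$, so $\mathcal{M}$ is bounded on $L_{\vec p/\sigma}$ and the Fefferman--Stein vector-valued maximal inequality is available in the mixed-norm setting.

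Next I would iterate the one-dimensional Nikol$'$skij gain coordinate by coordinate. In direction $x_1$, the $n=1$ case of the present theorem (itself proved by the foregoing pointwise inequality restricted to a single variable together with the scalar 1D Nikol$'$skij $\|h\|_{L_{r_1}}\le c R_1^{j(1/p_1-1/r_1)}\|h\|_{L_{p_1}}$ and the 1D Fefferman--Stein inequality) converts the $\ell_q$-sum into an $\ell_\infty$-supremum while exchanging $L_{r_1}$ for $L_{p_1}$ in the innermost norm. In each subsequent direction $x_k$ with $k\ge 2$, apply the 1D version of the theorem with $q=\infty$ to the sequence $\bigl(\prod_{k'<k} R_{k'}^{j(1/p_{k'}-1/r_{k'})}|f_j|\bigr)_j$, whose partial Fourier transform in $\xi_k$ is still supported in $[-AR_k^j,AR_k^j]$; each iteration trades $L_{r_k}$ for $L_{p_k}$ while multiplying the weight inside the $\sup_j$ by a further factor $R_k^{j(1/p_k-1/r_k)}$. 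After $n$ iterations the accumulated weight equals $\beta_j$ and the outer mixed norm has become $L_{\vec p}$, yielding the claim.

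The main obstacle lies precisely in this coordinate iteration: at each stage one must preserve the 1D Fourier support in the direction yet to be processed, and interchange the integration order between the recently created innermost $L_{p_{k'}}$ and the remaining outer $L_{r_k}$. Both issues are handled by the rectangular shape of $\supp\cf f_j$, which decouples the $n$ coordinate directions and permits a sequential treatment; the Minkowski-type swap between successive directional steps is licensed by having $r_k\ge p_k$ throughout. The geometric decay $\sum_j\beta_j^{-q\varepsilon}<\infty$ absorbs the residual $\ell_q$-summation that is not already cancelled by the accumulated Nikol$'$skij gain, thereby closing the estimate.
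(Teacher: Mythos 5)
The paper itself offers no proof of this statement: it refers the reader to Johnsen--Sickel \cite[Thm.~5]{JoSi}. So there is nothing in this paper against which your argument can be compared; one must assess it on its own terms, and there it has a genuine gap.

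The coordinate-by-coordinate iteration breaks down after the first step. You propose to apply a 1D vector-valued Nikol$'$skij inequality in direction $x_k$ to the sequence $\bigl(\prod_{k'<k}R_{k'}^{j(1/p_{k'}-1/r_{k'})}|f_j|\bigr)_j$, arguing that the rectangular support \eqref{vNPP-cnd} ``decouples the $n$ coordinate directions and permits a sequential treatment.'' But after step $k-1$ the quantity sitting inside the remaining outer $L_{r_k,\dots,r_n}$-norm is an inner $L_{(p_1,\dots,p_{k-1})}$-norm of a $\sup_j$ of weighted $|f_j|$'s. As a function of $x_k$, this object has \emph{no} compact Fourier support: taking $L_p$-norms (for $p\ne 2$) and taking absolute values and suprema over $j$ all destroy band-limitedness. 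The rectangle condition does give that the \emph{partial} Fourier transform of each $f_j$ in $\xi_k$ (with the other variables as parameters) stays in $[-AR_k^j,AR_k^j]$, but that is a property of $f_j$ itself, not of the inner-norm quantity one must actually estimate at step $k$. Hence the 1D Nikol$'$skij gain cannot be invoked in the later directions, and the iteration does not close. Your closing remark that the geometric decay $\sum_j\beta_j^{-q\varepsilon}<\infty$ ``absorbs the residual $\ell_q$-summation'' points to a second problem: trading $\ell_q$ for $\ell_\infty$ by pulling out $\beta_j^{-\varepsilon}$ produces the weight $\beta_j^{1+\varepsilon}$ inside the supremum, i.e.\ a strictly weaker inequality than the sharp weight $\prod_k R_k^{j(1/p_k-1/r_k)}$ that the theorem asserts, so this device cannot be what recovers the exact exponent. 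A correct argument (as in \cite{JoSi}) must handle the $L_{\vec r}\to L_{\vec p}$ exchange and the $\ell_q\to\ell_\infty$ exchange \emph{simultaneously}, via the rescaled Peetre--Fefferman--Stein maximal function $u_j^*(\vec t,\vec b^j;\cdot)$ and the iterated Bagby estimate \eqref{bagby}, rather than direction by direction.
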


For the proof the reader is referred to \cite[Thm.~5]{JoSi}.
As noted there, this vector-valued Nikol$'$skij inequality at once 
gives Sobolev embeddings for the $\fspq$, 
where by virtue of \eqref{vNPP-ineq} it suffices to increase only 
a single component $p_k$ of $\vec p$:

\begin{cor} \label{emb5}
When $0 < p_k \le r_k< \infty$ for all $k$ and $\vec{r} \neq \vec{p}$, 
then 
\be
F^{s,\vec{a}}_{\vec{p},q_1} (\Rn) \hookrightarrow  
F^{t,\vec{a}}_{\vec{r},q_2} (\Rn) 
\ee
holds for 
$t =s - \vec{a} \cdot \big(\frac{1}{\vec{p}}-\frac{1}{\vec{r}}\big)$.
\end{cor}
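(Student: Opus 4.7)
The plan is to reduce the embedding directly to the vector-valued Nikol$'$skij inequality, Theorem~\ref{vNPP-thm}. Given $u\in F^{s,\vec a}_{\vec p,q_1}(\Rn)$, set $f_j:=\cfi(\Phi_j\cf u)$. From \eqref{ax-ineq} and the support properties of $\Phi_j$, each $\supp\cf f_j$ lies in the rectangle $\prod_{k=1}^{n}[-c\,2^{ja_k},c\,2^{ja_k}]$ with $c=(13/10)^{\max_k a_k}$; hence the sequence $(f_j)$ fulfils the geometric rectangle condition~\eqref{vNPP-cnd} with $R_k=2^{a_k}$ and a common constant $A$ depending only on $\vec a$.

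Next I would apply Theorem~\ref{vNPP-thm} to the shifted sequence $g_j:=2^{jt}f_j$ with summation exponent $q:=q_2$. The crucial observation is that the weight on the right-hand side of \eqref{vNPP-ineq} collapses, since
\[
\prod_{k=1}^n R_k^{j(1/p_k-1/r_k)}=2^{j\vec a\cdot(1/\vec p-1/\vec r)}=2^{j(s-t)},
\]
so multiplying by the factor $2^{jt}$ sitting in $g_j$ leaves exactly $2^{js}|f_j|$ inside the supremum. The conclusion of the theorem therefore reads
\[
\Norm{\Bigl(\sum_{j=0}^\infty 2^{jtq_2}|f_j|^{q_2}\Bigr)^{1/q_2}}{L_{\vec r}(\Rn)}
\le c\,\Norm{\sup_{j\in\N_0}2^{js}|f_j|}{L_{\vec p}(\Rn)}.
\]
Since $\sup_{j}b_j\le\bigl(\sum_{j}b_j^{q_1}\bigr)^{1/q_1}$ pointwise for any $0<q_1\le\infty$ (with the usual convention for $q_1=\infty$), the right-hand side is bounded by $\Norm{u}{F^{s,\vec a}_{\vec p,q_1}(\Rn)}$, finishing the quasi-norm estimate; continuity of the inclusion as a map of topological vector spaces then follows via Lemma~\ref{SFS-lem}.

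There is no essential obstacle here: Theorem~\ref{vNPP-thm} has absorbed all the analytic work, and the corollary is a bookkeeping exercise in identifying parameters $R_k\leftrightarrow 2^{a_k}$, $q\leftrightarrow q_2$, $g_j\leftrightarrow 2^{jt}f_j$, together with the trivial pointwise embedding $\ell_{q_1}\hookrightarrow\ell_\infty$. The hypothesis $\vec r\neq\vec p$ enters only through Theorem~\ref{vNPP-thm}, and the finiteness $r_k<\infty$ is needed only so that $F^{t,\vec a}_{\vec r,q_2}(\Rn)$ is defined in the sense of Definition~\ref{fspq-defn}.
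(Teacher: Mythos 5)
Your proof is correct and is exactly the argument the paper has in mind: the authors state that Corollary~\ref{emb5} follows ``at once'' from the vector-valued Nikol$'$skij inequality~\eqref{vNPP-ineq}, and your bookkeeping (taking $f_j=\Phi_j(D)u$, $R_k=2^{a_k}$, $g_j=2^{jt}f_j$, $q=q_2$, followed by the pointwise $\ell_{q_1}\hookrightarrow\ell_\infty$ estimate) fills in precisely the steps they leave implicit. The only superfluous touch is the appeal to Lemma~\ref{SFS-lem} at the end, which is not needed — the quasi-norm inequality you derived already gives the boundedness of the inclusion.
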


The classical Nikol$'$skij inequality deals with a 
single function with compact spectrum.
This results by applying \eqref{vNPP-ineq} 
to a sequence with a single non-trivial element; then also
$r_k=\infty$ is allowed (cf.~\cite[Thm.~4]{JoSi}). This will, 
by the definition of $\bspq$, give 

\begin{cor}\label{besove}
Suppose
$0 < p_k \le r_k\le \infty$ for all $k$; $\vec r\ne\vec p$. Then
\be
B^{s,\vec{a}}_{\vec{p},q_1} (\Rn) \hookrightarrow  
B^{t,\vec{a}}_{\vec{r},q_2} (\Rn) 
\ee
holds if $t - \vec{a} \cdot \tfrac{1}{\vec{r}} 
< s - \vec{a} \cdot \tfrac{1}{\vec{p}}$, or if both
$t -\vec{a}\cdot\tfrac{1}{\vec{r}} = s-\vec{a}\cdot\tfrac{1}{\vec{p}}$
and $q_1 \le q_2$.
\end{cor}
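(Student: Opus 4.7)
The plan is to derive the embedding by applying the classical (single-function) Nikol$'$skij inequality term-by-term to the Littlewood--Paley pieces $\Phi_j(D)u$ of $u\in B^{s,\vec a}_{\vec p,q_1}(\Rn)$, and then to manipulate the resulting weighted $\ell_{q_2}$-sum into $\norm{u}{B^{s,\vec a}_{\vec p,q_1}}$. The first step is to verify the geometric rectangle condition \eqref{vNPP-cnd}: by the support property of $\Phi_j$ and the inequality \eqref{ax-ineq}, $\supp\cf(\Phi_j(D)u)$ is contained in the rectangle $\prod_k[-A\,2^{ja_k},A\,2^{ja_k}]$ for some absolute constant $A$. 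Hence \eqref{vNPP-ineq} specialised to a single non-zero term (where $r_k=\infty$ is admissible, cf.~\cite[Thm.~4]{JoSi}) yields, with $R_k=2^{a_k}$,
\[
 \norm{\Phi_j(D)u}{L_{\vec r}(\Rn)}
 \le c\,2^{j\vec a\cdot(\tfrac{1}{\vec p}-\tfrac{1}{\vec r})}
    \norm{\Phi_j(D)u}{L_{\vec p}(\Rn)},
 \qquad j\in\N_0,
\]
with $c$ independent of $j$ and $u$.

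Next I would multiply by $2^{jt}$, take $q_2$-th powers and sum in $j$ (with the obvious modification if $q_2=\infty$). Collecting the exponents of $2^j$ rewrites the bound as a weighted sum in $2^{js}\norm{\Phi_j(D)u}{L_{\vec p}}$ with weight $2^{-j\delta}$, where $\delta:=(s-\vec a\cdot\tfrac{1}{\vec p})-(t-\vec a\cdot\tfrac{1}{\vec r})\ge0$ by assumption.

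Two elementary cases then close the argument. In the strict case $\delta>0$, the geometric series in $2^{-j\delta q_2}$ converges, so one absorbs the weight to get
\[
 \norm{u}{B^{t,\vec a}_{\vec r,q_2}}
 \le c'\sup_{j\in\N_0} 2^{js}\norm{\Phi_j(D)u}{L_{\vec p}}
 \le c'\norm{u}{B^{s,\vec a}_{\vec p,q_1}},
\]
where the last step uses $\ell_{q_1}\imb\ell_\infty$, valid for every $q_1\in\,]0,\infty]$. In the borderline case $\delta=0$ with $q_1\le q_2$, the estimate is immediate from the monotonicity $\ell_{q_1}\imb\ell_{q_2}$ of sequence spaces.

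The only real point of care, and the main (if minor) obstacle, is to use the sharp rectangle $R_k=2^{a_k}$ rather than a cruder ball--rectangle inclusion that would deteriorate the anisotropic exponent; this hinges on applying the right-hand inequality of \eqref{ax-ineq} directly to the level set $\{|\xi|_{\vec a}\le\tfrac{13}{10}2^j\}$. Once this is in place the $B$-space embedding follows without any mixed-norm Fourier multiplier theory, and crucially the range $r_k=\infty$ is covered, which is the only reason the argument differs from the one behind Corollary~\ref{emb5}.
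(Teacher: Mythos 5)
Your argument is correct and is precisely the route the paper indicates (the paper merely states that the classical single-function Nikol$'$skij inequality, obtained from \eqref{vNPP-ineq} with one nontrivial term and with $r_k=\infty$ allowed, "will, by the definition of $\bspq$, give" the embedding). You have filled in the details faithfully: the rectangle condition with $R_k=2^{a_k}$ follows from the left-hand side of \eqref{ax-ineq} applied to $\supp\Phi_j$, the resulting exponent $\vec a\cdot(\tfrac1{\vec p}-\tfrac1{\vec r})$ is the right one, and the two endgame cases ($\delta>0$ via $\ell_{q_1}\imb\ell_\infty$ plus a convergent geometric series, $\delta=0$ via $\ell_{q_1}\imb\ell_{q_2}$) are exactly what is needed.
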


By definition, every $u\in B^{0,\vec a}_{\vec\infty,1}$, 
has finite norm series in $L_\infty$,
whence $B^{0,\vec a}_{\vec\infty,1}(\Rn)\imb C_{\op{b}}(\Rn)$.
Therefore Lemma~\ref{elementary} 
and Corollary~\ref{besove} give
$F^{s,\vec{a}}_{\vec{p},q} (\Rn) \imb
B^{s- \vec{a}\cdot \frac{1}{\vec{p}},\vec{a}}_{\vec{\infty},\infty} 
(\Rn) $,
so
\begin{equation}
  \fspq(\Rn)\imb C_{\op{b}}(\Rn) \quad\text{for}\quad
  s>\vec a\cdot\frac 1{\vec p}.
\end{equation}

\begin{rem}
The embeddings and inequalities of this section have 
been extensively studied, in many versions, over several decades. 
It would be outside of our topic to recall this here, \cite{BIN96} or 
\cite{ScTr87} may be consulted as a general reference;
\cite{JoSi} has remarks on the development, 
as well as proofs pertaining to 
the anisotropic framework used here.
\end{rem}

%&&&&&&&&&&&&&&&&&&&&&&&&&&&&&&&&&&&&&&&&&&&&&&&&&&&&&&&&&&&&
%&&&&&&&&&&&&&&&&&&&&&&&&&&&&&&&&&&&&&&&&&&&&&&&&&&&&&&&&&&&&

\subsection{Maximal inequalities}

%&&&&&&&&&&&&&&&&&&&&&&&&&&&&&&&&&&&&&&&&&&&&&&&&&&&&&&&&&&&&
%&&&&&&&&&&&&&&&&&&&&&&&&&&&&&&&&&&&&&&&&&&&&&&&&&&&&&&&&&&&&

As usual we let $Mf$ denote the Hardy--Littlewood maximal function, 
defined for a locally integrable function on $\Rn$ by
\be
  Mf(x)=\sup_{r>0} \frac1{\op{meas}(B(0,r))}
  \int_{B(0,r)} |f(x+y)|,dy.
\ee
When the definition of $M$ is applied only in the variable $x_k$,
we shall via the splitting $x=(x',x_k,x'')$ use the abbreviation
\be
M_k u(x_1, \ldots ,x_n) :=
(M u(x', \cdot , x''))(x_k)
\ee
Using this, we can formulate an important inequality due to Bagby \cite{Bag75}.
Let $1 < p_n < \infty $,  
and let $1 < q,p_k \le \infty$ for $k<n$. 
Then there exists a constant $c$ such that every sequence in 
$L_{\vec p}(\ell_q)$ fulfils the inequality 
\be\label{bagby}
  \Norm{M_n u_j}{L_{\vec{p}}(\ell_q)(\Rn)}
\le c \Norm{u_j}{L_{\vec{p}}(\ell_q) (\Rn)}.
\ee
It is well known that this allows the iterated maximal function
$M_n(\dots M_2(M_1 f)\dots)(x)$ to be estimated
in the mixed-norm space $L_{\vec p}$.

However, we shall also use the
maximal function of Peetre--Fefferman--Stein type,
\be \label{u*-id}
  u^* (\vec{r}, \vec b ; x) =
  \sup_{z \in \Rn}
  \frac{|u(x-z)|}{(1+ |b_1z_1|^{1/r_1}) \ldots 
    (1+|b_nz_n|^{1/r_n})} .
\ee
In our cases the function $u$ will have compact spectrum, and then $u^*$ is 
majorised by the iterated Hardy--Littlewood maximal function. 
As a first step one has the next result.
 
\begin{prop}\label{max*M-prop}
Suppose $0 < \vec{r} < \infty$,
and consider a cube $Q_{\vec b}$ as in \eqref{Qb-eq}.
Then there exist a constant $c>0$ such that
\be
\label{max1}
  \sup_{z \in \Rn}
  \frac{|u(x-z)|}{(1+ |z_1|^{1/r_1}) \ldots 
    (1+|z_n|^{1/r_n})} 
\le   c \big(M_n (\ldots M_2(
  M_1 |u|^{r_1})^{r_2/r_1} \ldots  )^{r_{n}/r_{n-1}} \big)^{1/r_n}(x)
\ee
holds whenever $\supp \cf u \subset Q_{\vec b}$ and $u\in L_{\vec p}(\Rn)$ 
for $0< p_k<\infty$ for all $k$. 
\end{prop}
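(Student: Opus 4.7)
The plan is a descending induction over the coordinates $k = n, n-1, \ldots, 1$, at each stage converting one layer of the iterated maximal function on the right side of \eqref{max1} by means of the one-dimensional Peetre--Fefferman--Stein inequality. Before the induction I would reduce to the normalised case $\vec b = (1, \ldots, 1)$ via the anisotropic dilation $v(y) := u(y_1/b_1, \ldots, y_n/b_n)$, which transfers the support condition to $\supp \hat v \subset Q_{\vec{\mathbf 1}}$ and makes both sides of \eqref{max1} scale covariantly. The basic single-variable ingredient is then the classical one-dimensional Peetre--Fefferman--Stein inequality: since $\supp \hat u \subset Q_{\vec{\mathbf 1}}$, the function $x_k \mapsto u(\ldots, x_k, \ldots)$ has spectrum in $[-1,1]$ for every fixed choice of the remaining variables, and I would use it in the form
\begin{equation*}
 |u(\ldots, x_k - z_k, \ldots)|^{r_k} \le c\,(1+|z_k|)\, M_k|u|^{r_k}(\ldots, x_k, \ldots).
\end{equation*}

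The core is then the descending induction on $k$ of the intermediate claim
\begin{equation*}
 |u(x-z)|^{r_n} \le C_k \prod_{j \ge k}(1+|z_j|)^{r_n/r_j}\, N_k(x_1 - z_1, \ldots, x_{k-1} - z_{k-1}, x_k, x_{k+1}, \ldots, x_n),
\end{equation*}
where I abbreviate
\begin{equation*}
N_k := M_n\bigl( M_{n-1}(\cdots M_k |u|^{r_k}\cdots)^{r_{k+1}/r_k}\cdots \bigr)^{r_n/r_{n-1}},
\end{equation*}
so that $N_1$ equals the right side of \eqref{max1} raised to the power $r_n$. The base case $k=n$ is a direct application of the single-variable inequality in the $x_n$-direction. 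In the inductive step $k+1 \to k$, the one-dimensional inequality is applied pointwise in $x_k$ inside the integral defining $M_{k+1}|u|^{r_{k+1}}$: this replaces the evaluation at $x_k - z_k$ by evaluation at $x_k$, produces a factor $(1+|z_k|)^{r_{k+1}/r_k}$, and rewrites the integrand as $(M_k|u|^{r_k})^{r_{k+1}/r_k}$, which is precisely the innermost layer of $N_k$.

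The most delicate point, and the main obstacle, is tracking the factor $(1+|z_k|)^{r_{k+1}/r_k}$ as it escapes the remaining outer layers $M_{k+2}, \ldots, M_n$: being constant relative to the averaging variable in each of those layers, it pulls outside the averages, while being repeatedly raised by the fractional exponents $r_{j+1}/r_j$ sitting between consecutive layers; the telescoping $r_{k+1}/r_k \to r_{k+2}/r_k \to \cdots \to r_n/r_k$ matches exactly the form asserted in the claim and is what forces the particular sequence of exponents appearing in $N_k$. Once the induction terminates at $k=1$, extracting the $r_n$-th root, dividing by $\prod_k(1+|z_k|)^{1/r_k}$ and taking the supremum over $z \in \Rn$ delivers \eqref{max1}, up to the harmless equivalence $(1+|z_k|)^{1/r_k} \sim 1 + |z_k|^{1/r_k}$. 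The key structural fact that makes the scheme work is that the product-type support $\supp \hat u \subset Q_{\vec b}$ is inherited by every single-variable partial Fourier transform, so that the one-dimensional Peetre--Fefferman--Stein inequality can be invoked in each coordinate independently.
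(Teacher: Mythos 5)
Your proof is correct and matches the argument the paper has in mind: the paper itself only cites \cite[Thm.~1.6.4]{ScTr87} for $n=2$ and asserts the extension to $n$ variables is easy, and your descending induction --- applying the one-dimensional Peetre--Fefferman--Stein inequality coordinate by coordinate and tracking the exponent telescoping $r_{k+1}/r_k\to r_{k+2}/r_k\to\dots\to r_n/r_k$ as the factor $(1+|z_k|)$ is raised through the outer maximal-operator layers --- is precisely that extension made explicit. One small imprecision worth noting: the denominator $\prod_k(1+|z_k|^{1/r_k})$ does \emph{not} transform covariantly under $v(y)=u(y_1/b_1,\dots,y_n/b_n)$ (it acquires a $\vec b$-dependent factor), but this is harmless here because the constant $c$ is permitted to depend on the fixed cube $Q_{\vec b}$; that $\vec b$-sensitivity is exactly why the maximal function \eqref{u*-id} used in Proposition~\ref{maximalf}, where uniformity over a sequence of cubes is needed, carries the weights $b_k$ inside the denominator.
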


The proof given in \cite[Thm.~1.6.4]{ScTr87} for $n=2$ is easily 
extended to arbitrary dimensions.
Combined with a dilation,
Proposition~\ref{max*M-prop} gives, as in \cite[1.10.2]{ScTr87},
a vector-valued estimate for the 
Fefferman--Stein maximal function, which will be central to
our trace estimates in Section~\ref{prfs-sect}:

\begin{prop}\label{maximalf}
Let $0 < \vec{p} < \infty$, $0 < q \le \infty$, and suppose
every component of $\vec r$ satisfies
\be
  0 < r_k < \min (p_1,\ldots,p_k, q).
\ee
Then there exists a  $c>0$ such that,
whenever  $(\vec b^j)$ is a sequence in 
$]0,\infty[\,^{\raise 2pt\hbox{$\scriptstyle n$}}$,
\be \label{maximalf-ineq} 
  \Norm{u_j^* (\vec{r},\vec b^j, \cdot)}{\lpvec (\ell_q)(\Rn)}
 \le   c 
  \Norm{u_j}{\lpvec (\ell_q)(\Rn)}
\ee
holds for all sequences $(u_j)$ in $L_{\vec{p}} (\ell_q)(\Rn)$ 
such that $\supp \cf u_j \subset Q_{\vec b^j}$ for all $j\in\N_0$.
\end{prop}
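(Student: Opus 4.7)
The plan is to combine Proposition~\ref{max*M-prop}, after reducing to unit spectra by a dilation, with an iterated application of Bagby-type maximal inequalities.

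First I would reduce to the case $\vec b^j=(1,\dots,1)$. Writing $D_j:=\op{diag}(b_1^j,\dots,b_n^j)$ and setting $v_j(y):=u_j(D_j^{-1}y)$, one has $\supp\cf v_j\subset Q_{(1,\dots,1)}$, and the change of variable $w=D_j z$ in the supremum defining $u_j^*$ gives
\begin{equation*}
u_j^*(\vec r,\vec b^j;x)=v_j^*(\vec r,(1,\dots,1);D_j x).
\end{equation*}
Since the one-dimensional Hardy--Littlewood maximal operator satisfies $(M_k|v_j|^{r_1})(y)=(M_k|u_j|^{r_1})(D_j^{-1}y)$, iterating this identity through the powers $r_k/r_{k-1}$ and then applying Proposition~\ref{max*M-prop} to $v_j$ yields the pointwise estimate
\begin{equation*}
u_j^*(\vec r,\vec b^j;x)\le c\bigl(M_n(\ldots(M_1|u_j|^{r_1})^{r_2/r_1}\ldots)^{r_n/r_{n-1}}\bigr)^{1/r_n}(x),
\end{equation*}
uniformly in $j$ and $\vec b^j$.

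Next I would peel off the maximal operators from the outside in. Set $h_j^{(1)}:=M_1|u_j|^{r_1}$ and $h_j^{(k)}:=M_k(h_j^{(k-1)})^{r_k/r_{k-1}}$ for $k\ge 2$, so that the previous right-hand side equals $(h_j^{(n)})^{1/r_n}$. The elementary scaling identities $\|g^{1/a}\|_{\lpvec}^{a}=\|g\|_{L_{\vec p/a}}$ and $\|(g_j^{1/a})_j\|_{\ell_q}^{a}=\|(g_j)_j\|_{\ell_{q/a}}$ convert $\|(h_j^{(n)})^{1/r_n}\|_{\lpvec(\ell_q)}^{r_n}$ into $\|h_j^{(n)}\|_{L_{\vec p/r_n}(\ell_{q/r_n})}$, and every exponent of this space exceeds~$1$ because $r_n<\min(p_1,\dots,p_n,q)$; so Bagby's inequality~\eqref{bagby} removes $M_n$. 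Rewriting the resulting $(h_j^{(n-1)})^{r_n/r_{n-1}}$-norm as $\|h_j^{(n-1)}\|_{L_{\vec p/r_{n-1}}(\ell_{q/r_{n-1}})}^{r_n/r_{n-1}}$ sets up the next step. At step~$k$ the outer maximal operator is $M_k$ on $L_{\vec p/r_k}(\ell_{q/r_k})$, and the hypothesis $r_k<\min(p_1,\dots,p_k,q)$ gives precisely $p_l/r_k>1$ for $l\le k$ and $q/r_k>1$. After $n$ iterations the bound telescopes to $c\,\||u_j|^{r_1}\|_{L_{\vec p/r_1}(\ell_{q/r_1})}^{r_n/r_1}=c\,\|u_j\|_{\lpvec(\ell_q)}^{r_n}$, and extracting an $r_n$-th root yields~\eqref{maximalf-ineq}.

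The main obstacle is that the cited Bagby inequality~\eqref{bagby} covers only $M_n$, whereas the iteration requires its counterpart for each $M_k$, $k<n$, on $L_{\vec p/r_k}(\ell_{q/r_k})$. This is established by applying Bagby's inequality (with $k$ playing the role of $n$) to the $k$ innermost variables, pointwise in the outer variables $(x_{k+1},\dots,x_n)$, and then taking $L_{p_l/r_k}$-norms in those outer variables via the monotonicity of the quasi-norms on nonnegative functions. The absence of any constraint on $p_l/r_k$ for $l>k$ is crucial: since no maximal operator acts in those variables at step~$k$, no such constraint is required, which dovetails with the hypothesis $r_k<\min(p_1,\dots,p_k,q)$ (which likewise omits $p_l$ with $l>k$) and is what makes the case of general $\vec p\in\,]0,\infty[\,^n$ accessible. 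This is the anisotropic, mixed-norm analogue of the argument in~\cite[1.10.2]{ScTr87}.
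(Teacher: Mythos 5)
Your argument is correct and matches the paper's proof in all essentials: reduce to $\vec b^j=(1,\dots,1)$ by a dilation $g_j(x)=u_j(x_1/b_1^j,\dots,x_n/b_n^j)$, apply Proposition~\ref{max*M-prop} and the commutation of $M$ with dilations to obtain the uniform pointwise bound by the iterated maximal function, and then peel off $M_n,M_{n-1},\dots,M_1$ via Bagby's inequality~\eqref{bagby} applied on the rescaled spaces $L_{\vec p/r_k}(\ell_{q/r_k})$ with outer variables frozen. Your explicit remark that no constraint is needed on $p_l/r_k$ for $l>k$ (only monotonicity of the outer quasi-norms) is the same observation the paper makes implicitly by freezing $x_n$ before passing to $\R^{n-1}$.
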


\begin{proof}
We apply Proposition~\ref{max*M-prop} to
\be \label{gj-eq}
  g_j (x)= u_j (x_1/b_1^j, \ldots , x_n / b_n^j).
\ee
Obviously
$\supp \cf g_j \subset Q_{(1, \ldots , 1)}$ for every $j$, and we have
\be
  g^*_j(x)
 \le   c_1  \big(M_n (\ldots M_2(
            M_1 |g_j|^{r_1})^{r_2/r_1} \ldots  )^{r_{n}/r_{n-1}}
      \big)^{1/r_n}(x),
\ee
where $c_1$ is independent of $j$. Now \eqref{gj-eq} and 
$x= (b_1^jy_1, \ldots , b_n^j y_n ) $ give
\be
  g^*_j(x)= 
   \sup_{z \in \Rn} \, \frac{|g_j(b_1^j y_1 - b_1^j z_1,\ldots,
   b_n^j y_n - b_n^j z_n) |}{(1+ |b_1^j z_1|^{1/r_1})\ldots  
  (1+ | b_1^j z_n |^{1/r_n})}
  = u^*_j(\vec r, \vec b; y).
\ee
Moreover, $M$ commutes with dilation, i.e.\ 
$Mf(\delta x)=Mf(\delta\cdot)(x)$, so
\begin{multline}
 \big(M_n (\ldots M_2(
  M_1 |g_j|^{r_1})^{\frac{r_2}{r_1}} \ldots  )^{\frac{r_n}{r_{n-1}}}
  \big)^{\frac1{r_n}}(y)(b_1^jy_1, \ldots , b_n^j y_n ) 
\\
 = 
  \big(M_n (\ldots M_2(
  M_1 |g_j (b^j_1 \, \cdot, \ldots, b_n^j\cdot)
      |^{r_1})^{\frac{r_2}{r_1}} \ldots  )^{\frac{r_n}{r_{n-1}}}
  \big)^{\frac1{r_n}}(y).
\end{multline}
In view of \eqref{gj-eq} this means that
\be
  u^*_j(\vec r, \vec b^j;y)
 \le   c_1  \big(M_n (\ldots M_2(M_1 |u_j
  |^{r_1})^{\frac{r_2}{r_1}}\ldots)^{\frac{r_{n}}{r_{n-1}}}
  \big)^{\frac1{r_n}}(y).
\ee
Applying Bagby's inequality \eqref{bagby} 
to $L_{(p_1/r_n, \, \ldots \, , p_n/r_n)} (\ell_{q/r_n})$
(using that all exponents belong to $\,]1,\infty[\,$,
by the restriction on $r_n$), this gives
\be
   \Norm{u_j^* (\vec{r}, \vec b^j;\cdot)}{\lpvec (\ell_q)(\Rn)}
 \le   c_2 
   \Norm{
      \big(M_{n-1}\ldots 
      M_2(M_1 |u_j|^{r_1})^{\frac{r_2}{r_1}} \ldots  \big)^{\frac1{r_{n-1}}}
   }
   {\lpvec (\ell_q) (\Rn)}.  
\ee
By freezing $x_n$, 
Bagby's inequality \eqref{bagby} applies to 
$L_{(p_1/r_{n-1}, \ldots , p_{n-1}/r_{n-1})} 
(\ell_{q/r_{n-1}}) (\R^{n-1})$. 
And by reiterating this, the statement follows.
\end{proof}

%&&&&&&&&&&&&&&&&&&&&&&&&&&&&&&&&&&&&&&&&&&&&&&&&&&&&&&&&&&&&
%&&&&&&&&&&&&&&&&&&&&&&&&&&&&&&&&&&&&&&&&&&&&&&&&&&&&&&&&&&&&

\subsection{Marschall's inequality}

%&&&&&&&&&&&&&&&&&&&&&&&&&&&&&&&&&&&&&&&&&&&&&&&&&&&&&&&&&&&&
%&&&&&&&&&&&&&&&&&&&&&&&&&&&&&&&&&&&&&&&&&&&&&&&&&&&&&&&&&&&&

Inspired by Marschall's paper \cite{Mar96}, we shall give a version of  
his pointwise estimate of pseudo-differential operators
$b(x,D)$, that is suitable for the mixed norm spaces.

In Marschall's inequality the symbol is estimated
via the norm of a homogeneous Besov space $\dot B^{s,\vec a}_{p,q}(\Rn)$. 
To recall the definition of  the norm,
we need a dyadic partition of unity,
$1 = \sum_{k=-\infty}^\infty \phi_k$ on $\Rn \setminus \{0\}$.
This can be obtained from the previously introduced functions,
by setting $\phi_j=\psi(2^{-j}|\cdot|_{\vec a})-\psi(2^{1-j}|\cdot|_{\vec a})$
for all $j\in\Z$. With this,
\be \label{e7}
  \supp \phi_k  \subset  B_{\vec{a}}(0,2^{k+1})\setminus 
  B_{\vec{a}}(0,2^{k-1}) \subset Q_{2^{(k+1)\vec a}(1,\dots,1)}.
\ee
Using $(\phi_j)_{j\in\Z}$, 
the norm $\norm{\cdot}{\dot B^{s,\vec a}_{p,q}}$ of 
$\dot B^{s,\vec a}_{p,q}(\Rn)$ is defined in analogy with that $\bspq$, 
simply by summing over $\Z$. 
It follows straightforwardly that 
\be \label{Bhomo-eq}
  \Norm{f(2^{k\vec a}\cdot)}{\dot B^{s,\vec a}_{p,q}}
  =
  2^{k(s-\frac{|\vec a|}{p})} \norm{f}{\dot B^{s,\vec a}_{p,q}},
  \qquad k\in \Z.
\ee
This scaling relation is
the important property we need from this tool.

For the anisotropic weights, i.e.\ $\vec a$, the length is denoted by
$|\vec a|=a_1+\dots+a_n$ for simplicity's sake.

\begin{prop}\label{Marschall}
Let a symbol $b \in C_0^\infty (\Rn)$ 
and a function $u \in C^\infty(\Rn)$ be given such that,
for $A>0$ and $R \ge 1$,
\be
  \supp \cf u \subset B_{\vec{a}}(0,AR) 
  \quad \text{and}\quad
  \supp  b \subset B_{\vec{a}} (0,A) 
\ee
When $\vec{t} = (t_1, \ldots \, , t_n)$ 
satisfies $0<t_k\le 1$ for all $k$, 
then there exists $c>0$ such that
the following inequality holds for all $x\in\Rn$,
with $d:= \min (1,t_1,\dots, t_n)$, 
\be
 |b(D)u(x)|
  \le c (RA)^{\vec{a}\cdot\frac{1}{\vec{t}}-|\vec a|} \, 
  \Norm{b}{\dot{B}^{\vec{a}\cdot\frac{1}{\vec{t}}, {\vec{a}}}_{1,d}} 
  \big(M_n ( \ldots \, (M_1 |u|^{t_1})^{t_2/t_1} \ldots 
  )^{t_n/t_{n-1}}\big)^{1/t_n}(x).
\ee
Here $c$ can be taken as a function of $\vec a$ and $\vec t$ only.
\end{prop}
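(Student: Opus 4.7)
The plan is to reduce the proof to a universal anisotropic weighted $L_1$-bound on the kernel $\cfi b$. Starting from the representation $b(D)u(x) = \int (\cfi b)(y)\,u(x-y)\,dy$ and applying Peetre's inequality with anisotropic scale $\vec b = (AR)^{\vec a}$ (admissible since $\supp \cf u \subset B_{\vec a}(0,AR) \subset Q_{(AR)^{\vec a}(1,\ldots,1)}$), one obtains
\[
|b(D)u(x)| \le u^{*}(\vec t, (AR)^{\vec a}; x) \int |(\cfi b)(y)| \prod_{j=1}^n \bigl(1 + (AR)^{a_j/t_j}|y_j|^{1/t_j}\bigr)\,dy.
\]

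The substitution $w_j = (AR)^{a_j} y_j$ turns the integral into $\int |\cfi\tilde b(w)|\prod_j(1+|w_j|^{1/t_j})\,dw$, where $\tilde b(\eta) := b((AR)^{\vec a}\eta)$ is supported in $B_{\vec a}(0,1/R) \subset B_{\vec a}(0,1)$. The scaling relation \eqref{Bhomo-eq} gives $\|\tilde b\|_{\dot B^{\sigma,\vec a}_{1,d}} = (AR)^{\sigma - |\vec a|}\|b\|_{\dot B^{\sigma,\vec a}_{1,d}}$ with $\sigma := \vec a \cdot 1/\vec t$. Hence the proof reduces to the universal inequality
\[
\int |\cfi g(w)| \prod_j(1+|w_j|^{1/t_j})\,dw \le C\,\|g\|_{\dot B^{\sigma,\vec a}_{1,d}},
\]
valid for all $g$ supported in a fixed bounded set, with $C$ depending only on $\vec a$ and $\vec t$.

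To prove this universal inequality, decompose $g = \sum_{k\in\Z}g_k$ via the Littlewood--Paley partition, $g_k := \cfi(\phi_k \cf g)$. The evenness of $\phi_k$ yields the identity $\cfi g_k = \phi_k\cdot\cfi g$, localizing each kernel piece to the annulus $|w|_{\vec a}\sim 2^k$ on which $|w_j|^{1/t_j} \le 2^{(k+1)a_j/t_j}$. Thus
\[
\int |\cfi g_k(w)| \prod_j(1+|w_j|^{1/t_j})\,dw \le C\,\max(1,2^k)^{\sigma}\|\cfi g_k\|_{L_1}.
\]
Summing over $k$ using the embedding $\ell^d\subset\ell^1$ (valid because $d:=\min(1,t_1,\ldots,t_n)\le 1$) and then passing from $\|\cfi g_k\|_{L_1}$ to $\|g_k\|_{L_1}$ by a Bernstein--Nikol'skij-type argument, applied after rescaling each $g_k$ to a symbol whose spectrum lies in a fixed annulus, produces the desired bound by $\|g\|_{\dot B^{\sigma,\vec a}_{1,d}}$. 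Finally, Proposition~\ref{max*M-prop} converts $u^{*}(\vec t, (AR)^{\vec a}; x)$ into the iterated Hardy--Littlewood maximal function $(M_n(\ldots(M_1|u|^{t_1})^{t_2/t_1}\ldots)^{t_n/t_{n-1}})^{1/t_n}(x)$, completing the estimate.

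The main obstacle is the universal weighted $L_1$-estimate, specifically the passage between the physical-space quantities $\|\phi_k\cdot\cfi g\|_{L_1}$ produced by Peetre's step and the Fourier-space norms $\|g_k\|_{L_1}$ defining the Besov norm. Bridging these with uniform constants across dyadic scales requires a delicate Bernstein--Nikol'skij reduction that handles the fractional exponents $1/t_j \ge 1$; the precise exponent $\sigma - |\vec a|$ in the final prefactor $(AR)^{\sigma-|\vec a|}$ emerges from combining this reduction with the homogeneity law \eqref{Bhomo-eq}.
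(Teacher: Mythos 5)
The reduction to a ``universal'' weighted $L_1$ estimate on the kernel is too lossy, and the universal inequality you arrive at is in fact false. Applying Peetre's inequality first gives $|b(D)u(x)|\le u^*(\vec t,(AR)^{\vec a};x)\int|\cfi b(y)|\prod_j(1+(AR)^{a_j/t_j}|y_j|^{1/t_j})\,dy$, which after rescaling reduces to showing
\begin{equation*}
  \int|\cfi g(w)|\prod_j(1+|w_j|^{1/t_j})\,dw \le C\,\Norm{g}{\dot B^{\vec a\cdot\frac1{\vec t},\vec a}_{1,d}}
\end{equation*}
for compactly supported $g$. This fails already in the isotropic one-dimensional case with $t=1$ (so $\sigma=1$, $d=1$): take $g(\xi)=\phi(\xi)\,e^{-M^2\xi^2}$ with $\phi$ a fixed cutoff. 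Then $\cf g$ is roughly a Gaussian profile of width $M$ and height $1/M$, so $\Norm{g}{\dot B^{1}_{1,1}}=\sum_k 2^k\Norm{g_k}{L_1}\sim\sum_{2^k\lesssim M}2^k/M\sim 1$ uniformly in $M$, whereas $\cfi g$ is a bump of width $\sim M$ and height $\sim 1/M$, so $\int|\cfi g(w)|(1+|w|)\,dw\gtrsim \frac1M\int_0^M(1+w)\,dw\sim M$. The trouble is precisely in the step you flagged as the main obstacle: rescaling each $g_k$ to a fixed annulus and using the uniform estimate $\Norm{\cfi h}{L_1}\le c\Norm{h}{L_1}$ there (from $\Norm{\cf h}{L_\infty}\le\Norm{h}{L_1}$ and the fixed annulus volume) gives, after scaling back, only $\Norm{\phi_k\cfi g}{L_1}\le c\,2^{k|\vec a|}\Norm{g_k}{L_1}$ — an unavoidable extra factor $2^{k|\vec a|}$ that shifts the Besov smoothness to $\sigma+|\vec a|$.

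The paper circumvents this by not separating $u$ from the kernel too early. It applies Nikol\cprime skij's inequality (Theorem~\ref{vNPP-thm}) to the product $\cfi b(x-\cdot)\,u(\cdot)$ to pass to $L_{\vec t}$-quasinorms with $\vec t\le\vec 1$, \emph{then} inserts the physical Littlewood--Paley cutoffs $\phi_k(x-\cdot)$. On each block the kernel contributes only through $\sup_y|\phi_k(y)\cfi b(y)|\le b_k\sim\Norm{\cfi(\phi_k\cf b)}{L_1}$, while $u$ is integrated in $L_{\vec t}$ over a dyadic cube, giving exactly $\prod_j 2^{ka_j/t_j}=2^{k\sigma}$ from the cube volumes and feeding the iterated Hardy--Littlewood maximal function. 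The $L_\infty$ bound on the kernel block versus the $L_{\vec t}$ average of $u$ is the essential balance: it produces $2^{k\sigma}b_k$ per block and hence the correct Besov exponent, whereas your decomposition ends up taking an $L_1$-type norm of the kernel block (which costs the full cube volume $2^{k|\vec a|}$) against an $L_\infty$-type bound on $u$.
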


\begin{proof}
Since convolutions in $\cs*\cs'$ are mapped to products by the 
Fourier transformation,
\be
  b(D) u(x)=\cfi(b\cf u)(x) = 
  \int \cfi b (x-y)u(y)\, dy.
\ee
With $x$ fixed, $y \mapsto \cfi b (x-y)u(y)$ has, 
by the triangle inequality for $|\cdot|_{\vec a}$, its  spectrum in 
\be
  B_{\vec{a}}(0, A) + B_{\vec{a}}(0,RA) \subset B_{\vec{a}}(0, (R+1)A).
\ee
Therefore the Nikol$'$skij inequality \eqref{vNPP-ineq} and 
an $L_{\vec p}\,$-version of \eqref{tau-eq} yields 
\begin{equation} \label{e8}
  \begin{split}
  |b(D)u(x)| & \le   \int |\cfi b (x-y)u(y)|\, dy 
\\
  & \le  c_1  
  (RA)^{\vec{a}\cdot\frac{1}{\vec{t}}-|\vec a|} 
  \Norm{\cfi b (x-\cdot)u}{L_{\vec{t}}} 
\\
  &\le   c_1  
  (RA)^{(\vec{a}\cdot\frac{1}{\vec{t}}-|\vec a|)}
  \Big(\sum_{k\in \Z}  \Norm{\phi_k(x-\cdot) 
  \cfi b (x-\cdot)u}{L_{\vec{t}}}^d \Big)^{1/d}.
  \end{split}
\end{equation}
In this inequality it suffices for the $L_{\vec t}$-norm, by \eqref{e7}, to
integrate over a cube on the right-hand side, and by the obvious estimate 
$\sup_y|\phi_k (y)\cfi b (y)|  
  \le  \int \big|\cfi_{y \to \eta} 
  (\phi_k \cfi b ) \big|  \, d\eta=:  b_{k}$,
one finds
\begin{equation} \label{e6} 
  I_1  :=  \int_{B(x_1,2^{(k+1)a_1})} |\, \phi_k (x-y)\cfi b (x-y)
  u (y)|^{t_1} \, dy_1
    \le  c_2  b_{k}^{t_1} 2^{ka_1}  \, M_1 |u|^{t_1}(x_1).  
\end{equation}
Proceeding iteratively by setting
$I_j = \int_{-\infty}^\infty (I_{j-1})^{t_j/t_{j-1}}\, dy_j$, 
one finds analogously
\begin{equation}
  \begin{split}
    I_n  & =  \int_{B(x_n,2^{(k+1)a_n})} (I_{n-1})^{t_n/t_{n-1}} \, dy_n
\\
     &\le  c_{n+1} \, b_{k}^{t_n}  
       2^{k t_n (\frac{a_1}{t_1}+\dots +\frac{a_{n-1}}{t_{n-1}})} 2^{ka_n} 
 M_n \big(\ldots (
   M_2 (M_1 |u|^{t_1})^{t_2/t_1}) \ldots \big)^{t_n/t_{n-1}} 
   (x_1, \ldots ,x_n).    
  \end{split}
\end{equation}
Raising to the power $1/t_n$ creates the factor 
$2^{k\vec a\cdot\frac1{\vec t}}$, so the desired inequality
follows from \eqref{e8} by observing that
$\sum_{k \in \Z} 2^{kd( \vec{a} \cdot \frac{1}{\vec{t}})}  
  \Norm{\cfi [\phi_k \, \cf b]}{L_1}^d 
  = \norm{b}{\dot{B}^{ \vec{a} \cdot \frac{1}{\vec{t}}, \vec{a}}_{1,d}}^d$.
\end{proof}

Now we turn to a vector-valued version which will be of great service
for us.

\begin{prop}\label{help1}
Suppose
$0 < t_k < \min (1,p_1,\ldots , p_k,q)$ for $k=1, \ldots, n$.
Let $\phi \in C_0^\infty(\Rn)$ such that $\supp \phi \subset
B_{\vec{a}}(0,2)$, and set
$\phi_j = \phi (2^{-j\vec a} \cdot)$, $j \in \N$. Then there exists a 
constant $c$ such that
\be
  \Norm{\cfi [\phi_j \cf u_j]}{\lpvec (\ell_q)(\Rn)}\le c
   R^{\vec{a} \cdot \frac{1}{\vec{t}} - |\vec a|}
   \norm{u_j}{\lpvec (\ell_q)(\Rn)}
\ee
for all sequences $\{u_j\}_{j=1}^\infty$ in $\cs'(\Rn)$ fulfilling
$\supp \cf u_j \subset \{\xi \mid |\xi|_{\vec a} \le R2^j \}$ for
some $R\ge1$.
\end{prop}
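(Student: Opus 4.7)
The plan is to combine Marschall's pointwise inequality (Proposition~\ref{Marschall}) with a cascade of Bagby's inequality \eqref{bagby}, in the same spirit as Proposition~\ref{maximalf}. Write $s := \vec a \cdot \tfrac{1}{\vec t}$ and $d := \min(1, t_1, \dots, t_n)$; the hypothesis $0 < t_k < \min(1, p_1, \dots, p_k, q)$ forces $s \ge |\vec a|$, keeping the Marschall exponent below well-signed, and makes all rescaled exponents used later strictly larger than $1$.

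The first step is to apply Proposition~\ref{Marschall} to each $u_j$ with symbol $b := \phi_j$. Since $\supp \phi_j \subset B_{\vec a}(0, 2^{j+1})$ I take $A_j := 2^{j+1}$, and since $\supp \cf u_j \subset B_{\vec a}(0, R \cdot 2^j)$ with $R \ge 1$ I set $R'_j := \max(1, R/2) \in [1, R]$, so that $R \cdot 2^j \le A_j R'_j \le 2R \cdot 2^j$ and Marschall's hypotheses are satisfied. This yields pointwise
\begin{equation*}
|\cfi[\phi_j \cf u_j](x)| \le c_0\, (A_j R'_j)^{s - |\vec a|}\, \norm{\phi_j}{\dot B^{s,\vec a}_{1,d}}\, \bigl(M_n(\ldots(M_1 |u_j|^{t_1})^{t_2/t_1}\ldots)^{t_n/t_{n-1}}\bigr)^{1/t_n}(x).
\end{equation*}

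The second step is to track the $2^j$-powers. The homogeneous scaling relation \eqref{Bhomo-eq}, applied to $\phi_j = \phi(2^{-j\vec a}\cdot)$, gives $\norm{\phi_j}{\dot B^{s,\vec a}_{1,d}} = 2^{-j(s - |\vec a|)} \norm{\phi}{\dot B^{s,\vec a}_{1,d}}$, and the right-hand norm is finite because $\phi \in C_0^\infty$ and $s > 0$. Multiplying against $(A_j R'_j)^{s - |\vec a|} \le (2R)^{s - |\vec a|} 2^{j(s - |\vec a|)}$ cancels the $2^j$-factors exactly and leaves, uniformly in $j$,
\begin{equation*}
|\cfi[\phi_j \cf u_j](x)| \le c_1\, R^{s - |\vec a|}\, \bigl(M_n(\ldots(M_1 |u_j|^{t_1})^{t_2/t_1}\ldots)^{t_n/t_{n-1}}\bigr)^{1/t_n}(x).
\end{equation*}
Taking the $L_{\vec p}(\ell_q)$-quasi-norm and cascading Bagby's inequality \eqref{bagby} from the outside in — first in $L_{\vec p/t_n}(\ell_{q/t_n})(\Rn)$ to strip $M_n$, then freezing $x_n$ and applying it in $L_{(p_1/t_{n-1}, \dots, p_{n-1}/t_{n-1})}(\ell_{q/t_{n-1}})(\Rnl)$ to strip $M_{n-1}$, and so on down to $M_1$, precisely as in the proof of Proposition~\ref{maximalf} — produces $\norm{u_j}{L_{\vec p}(\ell_q)}$ with the prefactor $c_1 R^{s - |\vec a|}$, which is the stated bound.

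The main obstacle is the calibration of $(A_j, R'_j)$ and the accompanying bookkeeping: one must respect $R'_j \ge 1$ without letting $A_j R'_j$ exceed a constant multiple of $R \cdot 2^j$, and then verify that the $2^{j(s-|\vec a|)}$ produced by Marschall's factor is killed exactly by the homogeneous Besov scaling of $\phi_j$. Once that cancellation is in place, the remaining cascade is essentially a copy of the end of the proof of Proposition~\ref{maximalf} and is routine.
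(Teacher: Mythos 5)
Your proof is correct and follows the same route as the paper: apply Proposition~\ref{Marschall} with a dyadic symbol, use the scaling relation \eqref{Bhomo-eq} to cancel the $2^{j(\vec a\cdot\frac1{\vec t}-|\vec a|)}$ factor, and then cascade Bagby's inequality \eqref{bagby} exactly as in the proof of Proposition~\ref{maximalf}. Your calibration of $A_j=2^{j+1}$ and $R'_j=\max(1,R/2)$ is in fact a touch more careful than the paper's, which simply takes $A=2^j$ and absorbs the factor of $2$ from $\supp\phi_j\subset B_{\vec a}(0,2^{j+1})$ into the constant; there is nothing further to add.
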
 

\begin{proof}
Applying Proposition~\ref{Marschall} with $A= 2^j$ to 
$ \cfi [\phi_j \cf u_j]$, this is estimated by the iterated maximal function 
times 
$c(R2^j)^{\vec{a}\cdot\frac{1}{\vec{t}}-|\vec a|} 
\Norm{\phi (2^{-j\vec a} \cdot)}{\dot{B}^{\vec{a}\cdot\frac{1}{\vec{t}}, 
{\vec{a}}}_{1,d}}$. 
So by \eqref{Bhomo-eq}, 
\be
  |\cfi [\phi_j\cf u_j](x)|
 \le   c R^{\vec{a}\frac{1}{\vec{t}}-|\vec a|} 
  \Norm{\phi}{\dot{B}^{\vec{a}\cdot \frac{1}{\vec{t}}, {\vec{a}}}_{1,d}} 
  \big(M_n ( \dots (M_1 |u|^{t_1})^{t_2/t_1} \ldots )^{t_n/t_{n-1}}\big)^{1/t_n}
(x).
\ee
The claim now follows by repeated use of \eqref{bagby},
as in the proof of Proposition~\ref{maximalf}.
\end{proof}

The above techniques also give a proof of the lift property for the $\fspq$
scale.

\begin{prop}\label{lift}
The map $\Lambda_r\colon\cs'\to\cs'$ given by
$\Lambda_r u=\cfi [(1+|\xi|_{\vec a}^2)^{r/2}\cf u ]$ 
is a linear homeomorphism 
$\fspq(\Rn)\to F^{s-r ,\vec{a}}_{\vec{p},q} (\Rn)$ for every $r\in\R$. 
\end{prop}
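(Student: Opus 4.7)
The plan is to prove two things: that $\Lambda_r\colon\cs'(\Rn)\to\cs'(\Rn)$ is a bijection, and that $\norm{\Lambda_r u}{F^{s-r,\vec a}_{\vec p,q}}\asymp\norm{u}{\fspq}$. Bijectivity is immediate from $\Lambda_r\Lambda_{-r}=\Lambda_{-r}\Lambda_r=I$ (the symbol $(1+|\xi|_{\vec a}^2)^{r/2}$ is everywhere positive, and, after an inconsequential smoothing near the origin, defines a tempered multiplier on $\cs'$). The two norm inequalities needed for the homeomorphism are of identical form under $r\leftrightarrow -r$, $s\leftrightarrow s-r$, so it suffices to prove the one-sided estimate $\norm{\Lambda_r u}{F^{s-r,\vec a}_{\vec p,q}}\le C\norm{u}{\fspq}$ for all real $r,s$; then $\Lambda_{-r}$ provides the continuous inverse.

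To prove this bound I would follow the template of Proposition~\ref{help1}. Decompose $u=\sum_{j\ge 0}v_j$ with $v_j:=\Phi_j(D)u$ and take a fattened family of cutoffs $\tilde\Phi_j\in C_0^\infty(\Rn)$ built from a single generator $\tilde\Phi$ (supported in a fixed dyadic annulus) such that $\tilde\Phi_j\equiv 1$ on $\supp\Phi_j$. Since both $\Phi_j$ and $\Lambda_r$ act as Fourier multipliers,
\begin{equation*}
 \Phi_j(D)\Lambda_r u= b_j(D)v_j,\qquad b_j(\xi):=\tilde\Phi_j(\xi)(1+|\xi|_{\vec a}^2)^{r/2}.
\end{equation*}
The dyadic scaling is exposed by writing, for $j\ge 1$,
\begin{equation*}
 b_j(\xi)=2^{jr}\, g_j(2^{-j\vec a}\xi),\qquad g_j(\eta):=(2^{-2j}+|\eta|_{\vec a}^2)^{r/2}\,\tilde\Phi(\eta).
\end{equation*}
Because $\tilde\Phi$ is supported in a fixed annulus $\{c_1\le|\eta|_{\vec a}\le c_2\}$, the quantity $(2^{-2j}+|\eta|_{\vec a}^2)^{r/2}$ is smooth in $\eta$ with all derivatives bounded uniformly in $j\ge 1$. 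Hence $(g_j)_{j\ge 1}$ is a bounded family in $C_0^\infty(\Rn)$ and, a fortiori, uniformly bounded in the homogeneous Besov norm $\dot B^{\vec a\cdot 1/\vec t,\vec a}_{1,d}$. For $j=0$ the symbol $b_0$ is a single fixed $C_0^\infty$ function, after replacing $(1+|\xi|_{\vec a}^2)^{r/2}$ by an everywhere-smooth equivalent symbol near the origin.

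Now choose $\vec t$ with $0<t_k<\min(1,p_1,\dots,p_k,q)$; this is possible for arbitrary $\vec p$ and $q$. Apply Proposition~\ref{Marschall} to $b_j(D)v_j$ with $A\sim 2^j$ and $R=O(1)$ (both $\supp b_j$ and $\supp\cf v_j$ lie in $B_{\vec a}(0,c\cdot 2^j)$), and use the scaling relation \eqref{Bhomo-eq} with $k=-j$ to replace $\|b_j\|_{\dot B}$ by $2^{jr}\cdot 2^{-j(\vec a\cdot 1/\vec t-|\vec a|)}\|g_j\|_{\dot B}$. The factor $(AR)^{\vec a\cdot 1/\vec t-|\vec a|}$ from Marschall's estimate then cancels against the Besov scaling, leaving the pointwise inequality
\begin{equation*}
 |\Phi_j(D)\Lambda_r u(x)|\le C\,2^{jr}\,\bigl(M_n\cdots(M_1|v_j|^{t_1})^{t_2/t_1}\cdots\bigr)^{1/t_n}(x).
\end{equation*}
Multiplying by $2^{j(s-r)}$ absorbs the $2^{jr}$ into the correct weight $2^{js}$; taking $\ell_q$- and $L_{\vec p}$-norms and iterating Bagby's inequality \eqref{bagby} exactly as in the last step of the proof of Proposition~\ref{maximalf} strips away the nested maximal functions and produces the bound $C\norm{u}{\fspq}$.

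The main obstacle I expect is the uniform Besov-norm control of the rescaled symbol family $(g_j)_{j\ge 1}$: one must justify that the $j$-dependent perturbation $2^{-2j}$ inside $(2^{-2j}+|\eta|_{\vec a}^2)^{r/2}$ does not spoil the uniform $C^\infty$-bounds, and that the $j=0$ case is covered by a smooth equivalent symbol. Once this bookkeeping is in place the remaining steps are assembly: Marschall's inequality, the Besov scaling \eqref{Bhomo-eq}, and iterated Bagby. The reverse bound, needed to conclude that $\Lambda_r$ is a homeomorphism rather than merely a bounded injection, then follows from the same argument applied to $\Lambda_{-r}$ between $F^{s-r,\vec a}_{\vec p,q}$ and $\fspq$.
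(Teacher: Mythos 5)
Your proposal is correct and follows essentially the same route as the paper's own proof: decompose via Littlewood--Paley, reduce the frequency-localised multiplier $\Phi_j(\xi)(1+|\xi|_{\vec a}^2)^{r/2}$ to a dyadically rescaled family of symbols, apply Marschall's inequality (Proposition~\ref{Marschall}) together with the Besov scaling \eqref{Bhomo-eq} to get a uniform pointwise bound by the iterated maximal function, then iterate Bagby's inequality as in Proposition~\ref{maximalf}, and obtain the inverse from $\Lambda_{-r}$. The paper settles the uniformity of the rescaled symbol family by a $W^m_1\hookrightarrow\dot B^{\vec a\cdot 1/\vec t,\vec a}_{1,d}$ bound with $m>\vec a\cdot\frac1{\vec t}$, using that the generator vanishes near the origin; your ``bounded in $C_0^\infty$'' remark is the same observation in looser form, and your proposed smoothing of the symbol near $\xi=0$ is harmless but unnecessary since the $j=0$ term involves only the fixed compactly supported symbol and is handled directly.
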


\begin{proof}
To show the boundedness of $\Lambda_r$, we 
let $1=\sum \Phi_j$ denote the Littlewood--Paley decomposition;
and take $\phi_j=\Phi_{j-1}+\Phi_j+\Phi_{j+1}$
such that $\phi_j \Phi_j = \Phi_j $ for all $j$.
Moreover,
$\phi_j = \phi (2^{-j\vec a}\cdot)$ for $j\ge 1$ for a suitable $\phi$.
Then $\norm{\Lambda_r u}{F^{s-r,\vec a}_{\vec p,q}}$ consists of terms like
\be
  2^{(s-r)j}\cfi [ \Phi_j (1+ |\xi|_{\vec a}^2)^{r/2} \cf u]
 =2^{sj} \cfi [ g_j \Phi_j\cf u],
\ee
with Fourier multipliers
$g_j(\xi):= 2^{-rj} (1+ |\xi|_{\vec a}^2)^{r/2} \phi_j (\xi)$.
They fulfil $\supp g_j\subset\supp\phi_j\subset B_{\vec a}(0,R2^{j})$ for a
fixed $R\ge1$. Hence Marschall's inequality in Proposition~\ref{Marschall}
gives a bound of $|2^{sj}g_j(D)u_j(x)|$ by 
the iterated maximal function on $2^{sj}\Phi_j(D)u$ times
\be
\begin{split}
  c2^{j(\vec{a}\cdot \frac{1}{\vec{t}} -|\vec a|)} 
  \norm{g_j}{\dot{B}^{\vec{a}\cdot\frac{1}{\vec{t}},
  \vec{a}}_{1,d}}
  & \le 
  \Norm{ 2^{-rj} (1+ |2^{ja}\xi|_{\vec a}^2)^{r/2} \phi(\xi) 
   }{\dot{B}^{\vec{a} \cdot \frac{1}{\vec{t}}, \vec{a}}_{1,d}}
\\
  &\le c
  \Norm{( 2^{- 2j} + |\xi|_{\vec a}^2)^{r/2} \phi(\xi)}{W^{m}_1} =C
\end{split}
\ee
Here we have used the scaling property, and taken
some $m>\vec a\cdot\frac1{\vec t}$ to get a uniform bound for all $j\ge0$, 
which holds since $\phi=0$ around the origin (the case $j=0$ is obvious). 
Now boundedness of $\Lambda_r$ follows from Bagby's inequality, similarly to
the proof of Proposition~\ref{maximalf}. 
The estimates are valid for arbitrary $r\in \R$,
so the boundedness of $\Lambda_r^{-1}=\Lambda_{-r}$ is also obtained.
\end{proof}

\begin{rem}
The lift property in Proposition~\ref{lift} applies to the proof of 
Proposition~\ref{Littlewood}.
Indeed, for $H^{s,\vec a}_{\vec p}$ it will be enough to prove 
$H^{0,\vec{a}}_{\vec{p}} (\Rn) = F^{0,\vec{a}}_{\vec{p},2} (\Rn)$
with equivalent norms; but this holds by \eqref{lwp-ineq}.
(Kr\'ee's result \cite{Kr67} was also used in
\cite[Thm.~2]{Liz70} for the proof of a variant of \eqref{lwp-ineq} with a
homogeneous, but non-smooth decomposition.) 
For $m_k = s/a_k$, $k=1, \ldots, n$,
the identification $\wmp (\Rn) = \hsp (\Rn)$, with equivalent norms, has
been proved by Lizorkin, cf.\  Theorem~3 and \mbox{(20)} ff.\ in 
\cite{Liz70}.
\end{rem}

%&&&&&&&&&&&&&&&&&&&&&&&&&&&&&&&&&&&&&&&&&&&&&&&&&&&&&&&&&&&&
%&&&&&&&&&&&&&&&&&&&&&&&&&&&&&&&&&&&&&&&&&&&&&&&&&&&&&&&&&&&&

\subsection{Convergence criteria}

%&&&&&&&&&&&&&&&&&&&&&&&&&&&&&&&&&&&&&&&&&&&&&&&&&&&&&&&&&&&&
%&&&&&&&&&&&&&&&&&&&&&&&&&&&&&&&&&&&&&&&&&&&&&&&&&&&&&&&&&&&&

  \label{cc-ssect}
It is a central theme to conclude the
convergence in $\cs'$ of a series $\sum_{j=0}^\infty u_j$, 
where $\supp \cf u_j$ is compact for each $j$.
More precisely the $u_j$ are  supposed to satisfy one of the
following conditions, that can be imposed for each choice
of $\vec a$:
\begin{Rmlist}
  \item (The dyadic corona condition.)
    \label{DCC-cnd}
   There exist an $A>1$ such that for every $j\ge1$,
   \begin{equation}
   \supp \hat u_j\subset\{\,\xi\mid 
       \tfrac{2^j}{A}\le |\xi|_{\vec a}\le A2^j \,\},
   \end{equation}
   whilst $\supp \hat u_0\subset\{\,\xi\mid |\xi|_{\vec a}\le A\,\}$.
  \item (The dyadic ball condition.)
    \label{DBC-cnd}
   There exist an $A>0$ such that for every $j\ge0$,
   \begin{equation}
   \supp \hat u_j\subset\{\,\xi\mid  |\xi|_{\vec a}\le A2^j \,\}.  
   \end{equation}
\end{Rmlist}
The convergence of $\sum_{j=0}^\infty u_j$ will follow, if in addition to
one of these conditions either some growth or integrability condition is
fulfilled by the $u_j$ in a uniform way. The resulting dyadic corona and
dyadic ball \emph{criteria} are summed up below.

To conclude the mere $\cs'$-convergence, the following lemma was given 
for $\vec a=(1,\dots ,1)$
by Coifman and Meyer albeit without arguments \cite[Ch.~16]{MeCo91}.
We give a proof here, because some of the observations therein have
additional consequences, that are useful for the present paper.

\begin{lem}
  \label{CF-lem}
$1^\circ$~Let $(u_j)_{j\in \N_0}$ be a sequence of $C^\infty$-functions in
$\cs'(\Rn)$ that for suitable constants $C\ge0$, $m\ge0$ fulfils 
both \eqref{DCC-cnd} and
\begin{equation}
  |u_j(x)|\le C 2^{jm}(1+|x|)^m \text{ for all $j\ge0$}.
  \label{CF-cnd}  
\end{equation}
Then $\sum_{j=0}^\infty u_j$ converges in $\cs'(\Rn)$ to a distribution $u$,
for which $\hat u$ is of order $m$.

$2^\circ$~For every $u\in \cs'(\Rn)$ the conditions \eqref{DCC-cnd} and
\eqref{CF-cnd} are fulfilled by the $u_j$ defined from a quasi-homogeneous
Littlewood--Paley decomposition of $u$.
\end{lem}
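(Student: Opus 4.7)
For part $1^\circ$ the plan is to test the series against an arbitrary $\varphi\in\cs(\Rn)$ and exploit the spectral gap at the origin built into \eqref{DCC-cnd}. The term $j=0$ is handled trivially by \eqref{CF-cnd}, so I concentrate on $j\ge 1$. Pick $\tilde\chi\in C_0^\infty(\Rn\setminus\{0\})$ equal to $1$ on $\{A^{-1}\le|\xi|_{\vec a}\le A\}$ and set $\tilde\chi_j(\xi):=\tilde\chi(2^{-j\vec a}\xi)$. By \eqref{DCC-cnd}, $\tilde\chi_j\equiv 1$ on $\supp\hat u_j$ for $j\ge 1$, so $u_j=\psi_j*u_j$ with $\psi_j:=\cfi\tilde\chi_j$, and duality yields $\langle u_j,\varphi\rangle=\langle u_j,\check\psi_j*\varphi\rangle$, where $\check\psi_j(x):=\psi_j(-x)$.

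The crucial step is to establish, for every $N,M\in\N_0$, the uniform decay
\[
  |\check\psi_j*\varphi(x)|\le C_{N,M}\,2^{-jN}(1+|x|)^{-M},\qquad j\ge 1.
\]
I would prove this by Fourier-side analysis: since $\tilde\chi$ vanishes near $0$, it factors as $\tilde\chi(\eta)=|\eta|^{2k}h_k(\eta)$ with $h_k\in C_0^\infty$, producing the gain $2^{-2k a_{\min} j}$ in $\tilde\chi_j$ via $|2^{-j\vec a}\xi|\le 2^{-ja_{\min}}|\xi|$, where $a_{\min}:=\min_k a_k$. Combined with the Schwartz decay of $\hat\varphi$ and the standard duality between multiplication by $x^\beta$ and $\partial_\xi^\beta$ (which introduces only further $2^{-ja_{\min}}$ factors when hitting $\tilde\chi_j$), one obtains the displayed bound. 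Plugging it into \eqref{CF-cnd} gives
\[
  |\langle u_j,\varphi\rangle|\le C\,2^{j(m-N)}\int_{\Rn}(1+|x|)^{m-M}\,dx,
\]
which is summable for $N>m$ and $M>m+n$; hence $u:=\sum_j u_j$ converges in $\cs'(\Rn)$. The order claim on $\hat u$ follows from the same growth hypothesis: for $\eta\in C_0^\infty(K)$ with $K$ compact, only finitely many $j$ contribute to $\langle\hat u,\eta\rangle=\sum_j\langle u_j,\cf\eta\rangle$ by \eqref{DCC-cnd}, and each term is controlled via $|\langle u_j,\cf\eta\rangle|\le C\,2^{jm}\int(1+|x|)^m|\cf\eta(x)|\,dx$ and integration by parts in terms of finitely many derivatives of $\eta$, with the derivative count tied to $m$.

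For part $2^\circ$ I take $u_j:=\Phi_j(D)u$, $j\in\N_0$. The inclusion $\supp\hat u_j\subset\supp\Phi_j$ yields \eqref{DCC-cnd} at once from the construction of the quasi-homogeneous Littlewood--Paley decomposition. For the pointwise growth, $\cs'$-continuity of $u$ supplies $C_u$ and $M_0\in\N_0$ with
\[
  |\langle u,\eta\rangle|\le C_u\sum_{|\alpha|,|\beta|\le M_0}\sup_y|y^\beta\partial^\alpha\eta(y)|,\qquad \eta\in\cs(\Rn).
\]
Applying this to $\eta(y):=\cfi\Phi_j(x-y)$ and using the scaling identity $\cfi\Phi_j(z)=2^{j|\vec a|}\cfi\Phi(2^{j\vec a}z)$ for $j\ge 1$ (with a fixed $\Phi\in C_0^\infty(\Rn\setminus\{0\})$), the inequality $|2^{j\vec a}z|\ge 2^{ja_{\min}}|z|$, and the Schwartz decay of $\partial^\alpha\cfi\Phi$, a short computation yields $|u_j(x)|\le C\,2^{jm}(1+|x|)^{M_0}$ for some $m=m(\vec a,M_0)$; the term $j=0$ is a single fixed $C^\infty$ function of polynomial growth and is absorbed into the estimate.

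The hard part is the uniform-in-$j$ pointwise estimate on $\check\psi_j*\varphi$ in part $1^\circ$: one must simultaneously propagate the $2^{-jN}$ gain from the spectral gap and the $(1+|x|)^{-M}$ decay through the anisotropic scaling $2^{j\vec a}$, keeping careful track of the accompanying $2^{-ja_{\min}}$ factors. Once this pointwise decay is in hand, the rest of the argument is routine bookkeeping with Schwartz seminorms.
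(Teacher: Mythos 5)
Your argument is correct, and for part $1^\circ$ it follows a genuinely different technical route from the paper's. Both proofs start from the same idea of inserting a smooth annular cutoff $\tilde\chi_j$ (resp.\ $\psi(2^{-j\vec a}\cdot)$ in the paper) that equals $1$ on $\supp\hat u_j$, so that $\langle u_j,\varphi\rangle$ may be repaired against $\check\psi_j*\varphi=\cfi\bigl[\tilde\chi_j(\mp\cdot)\hat\varphi\bigr]$. From there the paper applies Cauchy--Schwarz to split the pairing into the two weighted $L^2$-norms $\nrm{(1+|x|^2)^{-(m+n)/2}u_j}{2}$ and $\nrm{(1+|x|^2)^{(m+n)/2}\cfi(\psi(2^{-j\vec a}\cdot)\hat\varphi)}{2}$, and then estimates the second factor via Plancherel, harvesting the $2^{-jk}$ gain from the radial integral $\int_{2^{j-1}/A}^\infty r^{-1-2k}\,dr$. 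You instead prove a pointwise Schwartz-type bound $|\check\psi_j*\varphi(x)|\le C_{N,M}2^{-jN}(1+|x|)^{-M}$ and then pair directly with \eqref{CF-cnd}; the mechanism is the same (the spectral gap forces $|\xi|\gtrsim 2^{ja_{\min}}$ on the support, so the rapid decay of $\hat\varphi$ yields the $2^{-jN}$), but you trade Plancherel for an oscillatory-integral/integration-by-parts bookkeeping in the $x$-variable. Your approach is a bit more elementary (no $L^2$ theory) and yields a reusable pointwise kernel estimate; the paper's is shorter because Cauchy--Schwarz disposes of the spatial decay in one stroke. For $2^\circ$ both proofs are essentially the same: pair $u$ (or $\hat u$) against the rescaled test function, and let the anisotropic dilation produce the factor $2^{jm}$; your phrasing via the continuity seminorms of $u$ rather than the order of $\hat u$ is an unimportant variant. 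Two small points worth tightening if you were to write this out in full: the order claim on $\hat u$ needs a line saying explicitly how $\int(1+|x|)^m|\cf\eta|\,dx$ is converted into finitely many sup-norms of derivatives of $\eta$ on the compact set $K$ (this costs a few extra derivatives beyond $m$, so be careful about what ``order $m$'' is supposed to mean — the paper is itself vague here), and in $2^\circ$ one should note that \eqref{CF-cnd} requires the \emph{same} $m$ in both the $2^{jm}$ factor and the $(1+|x|)^m$ weight, which is harmless since one can replace $m$ by $\max(m,M_0)$.
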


Since any $\hat u\in \cs'$ is of finite order, the $\hat u_j$
in $2^\circ$ are at most of the same order.
Then there is some $m\ge0$ such that 
$|u_j(x)|\le c_j(1+|x|)^m$, 
by the Paley--Wiener--Schwartz Theorem,
which almost gives \eqref{CF-cnd}; but
the $j$-dependence is by $2^\circ$ not worse than $c_j={\cal O}(2^{mj})$.

\begin{proof}
In $2^\circ$ it is clear that 
$u_j(x)=c\dual{\hat u}{\Phi_j
e^{\im x\cdot\xi}}$ fulfils \eqref{DCC-cnd} and 
%the continuity of $u\colon \cs\to\C$ yields
\begin{equation}
  |u_j(x)|\le c \sup\bigl\{\,(1+|\xi|)^m
    |D^\alpha_{\xi}(\Phi(2^{-j}\xi)e^{\im x\cdot\xi})| \bigm| 
     \xi\in \Rn,\quad |\alpha|\le m\,\bigr\}.
\end{equation}
Invoking Leibniz' rule, the worst terms occurs when derivatives of order $m$
fall on the exponential, and this is estimated by $C2^{jm}(1+|x|)^m$.
 
To prove $1^\circ$, note that 
if $\psi\in C^\infty(\Rn)$ is supported for
$\tfrac{1}{2A}\le|\xi|_{\vec a}\le2A$
and equalling $1$ where $\tfrac{1}{A}\le|\xi|_{\vec a}\le A$, 
any $\varphi\in \cs$ fulfils 
\begin{equation}
  |\dual{u_j}{\overline{\varphi}}|\le 
  \nrm{(1+|x|^2)^{-\tfrac{m+n}{2}}u_j}{2}
  \nrm{(1+|x|^2)^{\tfrac{m+n}{2}}\cfi (\psi(2^{-j\vec a}\cdot)\hat\varphi)}{2}.
\end{equation}
Here the first norm is ${\cal O}(2^{mj})$ by \eqref{CF-cnd}. For any $k>0$
Parseval--Plancherel's identity gives
\begin{multline}
   \nrm{(1+|x|^2)^{m+n}\cfi (\psi(2^{-j\vec a}\cdot)\hat\varphi)}{2}
\\
  \le\sum_{|\alpha+\beta|\le2m+2n}
   c_{\alpha,\beta} 2^{-j\alpha\cdot \vec a}\nrm{D^\alpha\psi}{\infty}
   \nrm{(1+|\xi|)^{k+n/2}D^{\beta}\hat\varphi}{\infty}
   \big(\int_{2^{j-1}/A}^\infty r^{-1-2k}\,dr\big)^{1/2}
\\
  \le c(A,k,m,n,\varphi,\psi)2^{-jk}.
\end{multline}
That is, $\dual{u_j}{\overline{\varphi}}={\cal O}(2^{(m-k)j})$ for $k>m$, so
$\sum_{j=0}^\infty \dual{u_j}{\varphi}$ converges, whence $\sum u_j$ does so
in $\cs'$.
\end{proof}

\begin{rem}
Littlewood--Paley decompositions 
$u=\sum_{j=0}^\infty u_j$ are \emph{rapidly convergent}, 
in the following sense:
if an arbitrary $u\in \cs'$ is decomposed as in 
$2^\circ$ above, the proof of $1^\circ$ gives 
\begin{equation}
  \dual{u_j}{\varphi}={\cal O}(2^{-Nj}) \quad\text{for every}\quad  N>0,
  \ \varphi\in \cs(\Rn),
  \label{rapid-eq}
\end{equation}
so
$\dual{u-\sum_{j< k}u_j}{\varphi}
=\sum_{j\ge k}\dual{u_j}{\varphi}={\cal O}(2^{-Nk})\to 0$, 
rapidly for $k\to\infty$.
\end{rem}

\bigskip

For the $\fspq$ we have the following 
(quasi-homogeneous) dyadic ball criterion:

\begin{lem}
  \label{Fball-lem}
When $s>\sum_{k=1}^n \tfrac{a_k}{\min(1,p_1,\dots,p_k,q)}-|\vec a|$ 
for $0<\vec p<\vec\infty$ and $0<q\le\infty$,
then there exists a $c>0$ such that, for every sequence
$(u_j)$ in $\cs'(\Rn)$ fulfilling
both the dyadic ball condition \eqref{DBC-cnd} and that
\begin{equation}
  F:=\Norm{(\sum_{j=0}^\infty 2^{sjq}|u_j|^q)^{\frac1q}}{L_{\vec p}}<\infty,
\end{equation}
the series $\sum_{j=0}^\infty u_j$ converges in $\cs'(\Rn)$ to a
$u\in\fspq(\Rn)$ for which
$\norm{u}{\fspq}\le cF$.
\end{lem}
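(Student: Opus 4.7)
The plan is to bound $\norm{u}{\fspq}\le cF$ by a dyadic shift that reduces each piece to Proposition~\ref{help1}, followed by a geometric summation in the shift index; convergence of $\sum u_j$ in $\cs'$ comes cheaply. First, the pointwise inequality $2^{sj}|u_j(x)|\le \bigl(\sum_k 2^{skq}|u_k(x)|^q\bigr)^{1/q}$ (with the natural $\sup$-modification for $q=\infty$) yields $\norm{u_j}{L_{\vec p}}\le 2^{-sj}F$. Since $\mu_k:=\min(1,p_1,\dots,p_k,q)\le 1$ forces $\sum_k a_k/\mu_k\ge|\vec a|$, the hypothesis gives $s>0$; with $\tau_0:=\min(1,p_1,\dots,p_n)$ the series $\sum_j\norm{u_j}{L_{\vec p}}^{\tau_0}$ is a convergent geometric sum, so $\sum u_j$ converges in the quasi-Banach space $L_{\vec p}$, and via $L_{\vec p}\imb\cs'$ to some $u\in\cs'$.

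For $k\ge 1$, $\Phi_k(D)u=\sum_j\Phi_k(D)u_j$ by $\cs'$-continuity of $\Phi_k(D)$; the ball condition together with $\supp\Phi_k\subset\{|\xi|_{\vec a}\ge\tfrac{11}{20}2^k\}$ forces $\Phi_k(D)u_j=0$ unless $j\ge k-N_0$, for a constant $N_0=N_0(A)$. Setting $l:=j-k\ge -N_0$ gives $\Phi_k(D)u=\sum_{l\ge -N_0}\Phi_k(D)u_{k+l}$. Write $\Phi_k(\xi)=\phi(2^{-k\vec a}\xi)$ for a fixed $\phi\in C_0^\infty(\Rn)$ with $\supp\phi\subset B_{\vec a}(0,2)$; for each fixed $l$, the sequence $(u_{k+l})_{k\ge 1}$ satisfies the spectral hypothesis of Proposition~\ref{help1} with $R_l:=A\max(1,2^l)\ge 1$. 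Choose $0<t_k<\mu_k$ small enough that $\alpha:=\vec a\cdot\tfrac{1}{\vec t}-|\vec a|<s$, which is possible exactly because $s>\sum_k a_k/\mu_k-|\vec a|$. Proposition~\ref{help1} then yields
\begin{equation*}
  \Norm{\Phi_k(D)u_{k+l}}{L_{\vec p}(\ell_q)}\le c\,R_l^\alpha\,\Norm{u_{k+l}}{L_{\vec p}(\ell_q)},
\end{equation*}
and multiplying by $2^{sk}=2^{-sl}2^{s(k+l)}$ plus reindexing $m=k+l$ converts this to
\begin{equation*}
  \Norm{\bigl(\sum_k 2^{skq}|\Phi_k(D)u_{k+l}|^q\bigr)^{1/q}}{L_{\vec p}}\le c\,R_l^\alpha\,2^{-sl}\,F.
\end{equation*}

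With $\tau:=\min(1,p_1,\dots,p_n,q)$ the $L_{\vec p}(\ell_q)$-quasi-norm is $\tau$-subadditive, bounding the $k\ge 1$ part of $\norm{u}{\fspq}^\tau$ by $c^\tau F^\tau\sum_{l\ge -N_0}R_l^{\alpha\tau}2^{-sl\tau}$. The finitely many terms $-N_0\le l\le 0$ contribute a bounded constant; for $l\ge 0$, $R_l^\alpha 2^{-sl}=A^\alpha 2^{(\alpha-s)l}$ forms a geometric series with ratio below $1$ (since $\alpha<s$), hence summable. The $k=0$ contribution $\norm{\Phi_0(D)u}{L_{\vec p}}$ is handled analogously by applying Proposition~\ref{Marschall} termwise (with $A\sim 1$, $R=1$) and summing in $j$ using $\tau_0$-subadditivity, giving a bound $\le c'F$. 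Altogether $\norm{u}{\fspq}\le cF$. The main obstacle is to make Proposition~\ref{help1} apply uniformly in the shift $l$: rewriting the spectral radius $A2^{k+l}$ as $R_l\cdot 2^k$ with $R_l=A\max(1,2^l)$ is precisely what makes the $l$-dependence separable from $k$ and summable.
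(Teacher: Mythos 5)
Your norm estimate is essentially the paper's: the dyadic shift $l=j-k$, the reduction to Proposition~\ref{help1} with a separable spectral radius $R_l=A\max(1,2^l)$, and the geometric summation in $l$ enabled by $\alpha=\vec a\cdot\tfrac1{\vec t}-|\vec a|<s$ all match the paper's chain \eqref{Fest-eq}. (A small slip: for the $k=0$ term you must apply Marschall with $R\sim2^j$, not $R=1$, since $\supp\cf u_j\subset B_{\vec a}(0,A2^j)$; only then does the $j$-sum become geometric with ratio $2^{\alpha-s}<1$.)

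The genuine gap is the $\cs'$-convergence step. You argue that $\sum_j\norm{u_j}{L_{\vec p}}^{\tau_0}$ is a convergent geometric sum, so $\sum u_j$ converges in $L_{\vec p}$, and then invoke ``$L_{\vec p}\imb\cs'$''. This embedding fails whenever some $p_k<1$: already in one dimension $L_p(\R)\not\subset L^1_{\mathrm{loc}}(\R)\supset\cs'\cap L^1_{\mathrm{loc}}$ for $p<1$ (e.g.\ $|x|^{-a}\chi_{[-1,1]}\in L_p$ for $ap<1$ yet is not locally integrable when $a\ge1$). Since your identity $\Phi_k(D)u=\sum_j\Phi_k(D)u_j$ presupposes that $\sum u_j$ converges in $\cs'$, the whole estimate becomes unfounded until this is repaired. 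The paper avoids the issue by running the same estimate on the \emph{partial sums} $S_M=\sum_{k\le M}u_k$ first (which needs no a priori convergence), concluding that $(S_M)$ is bounded in $\fspq$; combined with the tail estimate in the weaker space $F^{s',\vec a}_{\vec p,1}$, $s'<s$, this gives a Cauchy sequence in a complete Lizorkin--Triebel space embedded in $\cs'$, and Fatou then yields the $\fspq$-bound for the limit. Your estimates all survive if you reorganise them this way, so the fix is structural rather than substantive.
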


\begin{proof} By condition \eqref{DBC-cnd} there is a fixed $h\in\N$ such that
$\Phi_j\cf u_k=0$ for $k<j-h$. So 
\be
  \cfi [\Phi_j\sum_{k=0}^M \cf u_k] = 
  \cfi [\Phi_j \sum_{k=j-h}^{M} \cf u_k ]
  \quad\text{for}\quad M\ge j-h.
\ee
Setting $k=j+\ell$
and using that $\norm{\cdot}{\ell_1}\le\norm{\cdot}{\ell_{\tau}}$
for $\tau=\min(1,p_1,\ldots, p_n,q)$, 
one obtains the first of the following inequalities, that also rely on
Proposition~\ref{help1} with $R=\max(1,A)2^{\ell_+}$,
\be \label{Fest-eq}
\begin{split}
 \Norm{\sum_{k\le M} u_k}{\fspq}^\tau
  &\le \Norm{\big(\sum_{j=0}^\infty (2^{sj\tau}\sum_{\ell=-h}^{M-j} 
             |\cfi[\Phi_j\cf u_{j+\ell}]|^\tau)^{q/\tau} \big)^{\tau/q}}
            {L_{\vec p/\tau}}
\\
 &\le  \sum_{\ell=-h}^M
 \Norm{2^{js} \cfi [ \Phi_j\cf u_{j+\ell}]}{\lpvec (\ell_q)}^\tau 
\\
  & \le   c
   \sum_{\ell=-h}^\infty 2^{\ell_+ \tau(\vec{a} \cdot \frac{1}{\vec{t}} -|\vec a|)}
  \Norm{2^{js}u_{j+\ell}}{\lpvec (\ell_q)}^\tau
  \le c_1 F^{\tau} \sum_{\ell=-h}^\infty 
  2^{\ell_+ \tau (-s +\vec{a} \cdot \frac{1}{\vec{t}} -|\vec a|)} .
\end{split}
\ee
Hereby $t_k < \min(1, p_1, \ldots,p_k , q)$ must be fulfilled.
But the $t_k$ can be taken with this property at the same time as 
$s > \vec{a} \cdot \frac{1}{\vec{t}} - |\vec a|$;
cf.\ the conditions on $s$ in the lemma.

With $\vec t$ as above, the sequence 
$(\sum_{k=0}^M u_k)_{M\in\N}$ is by \eqref{Fest-eq} bounded in
$\fspq (\Rn)$. 
%This yields at once that 
Therefore 
it is fundamental in $F^{s',\vec a}_{\vec p,1}$ for $s'<s$, 
hence convergent to some $u$. 
Using Fatou's lemma for $M\to\infty$ on the left in \eqref{Fest-eq}, 
the estimate $\norm{u}{\fspq}\le cF$ is obtained.  
\end{proof}

In case $p_n \le\ldots\le p_2 \le p_1 $
the restriction for $s$ reduces to
$s>  \sum_{k=1}^n   a_k (
\frac{1}{\min(1,p_k, q)} - 1)$.
In case 
$p_1 =\ldots= p_n$ this gives back the unmixed version known since \cite{Y1}.

The above proof gives more, for if the series fulfils the 
stronger corona condition \eqref{DCC-cnd}, then $\cfi(\Phi_j\cf u_k)=0$ unless
$j-h\le k\le j+h$. In this case the sums in \eqref{Fest-eq} have 
$l\in\{-h,\dots,h\}$, so the restriction on $s$ is not needed. This proves

\begin{lem}
  \label{Fcor-lem}
When $s\in\R$ and
$0<\vec p< \vec\infty$, $0<q\le\infty$,
there exists $c>0$ such that, for every sequence
$(u_j)$ in $\cs'(\Rn)$ fulfilling
both the dyadic corona condition \eqref{DCC-cnd} and that
\begin{equation}
  F:=\Norm{(\sum_{j=0}^\infty 2^{sjq}|u_j|^q)^{\frac1q}}{L_{\vec p}}<\infty,
\end{equation}
the series $\sum_{j=0}^\infty u_j$ converges in $\cs'(\Rn)$ to a
$u\in\fspq(\Rn)$ for which
$\norm{u}{\fspq}\le cF$.
\end{lem}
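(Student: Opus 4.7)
The plan is to follow the proof of Lemma~\ref{Fball-lem} almost \emph{verbatim}, and exploit the additional control afforded by the two-sided support information in the corona condition.

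First, I would observe that under the dyadic corona condition \eqref{DCC-cnd} the annular support of $\Phi_j$ (contained in $\{\tfrac{11}{20}2^j\le|\xi|_{\vec a}\le\tfrac{13}{10}2^j\}$) and the annular support of $\hat u_k$ (in $\{2^k/A\le|\xi|_{\vec a}\le A2^k\}$ for $k\ge1$, and in a fixed ball for $k=0$) intersect only for $|j-k|\le h$, where $h=h(A)\in\N$ is fixed. Consequently $\cfi[\Phi_j\cf u_k]=0$ whenever $|j-k|>h$, so that for every $M\in\N$,
\[
  \cfi\bigl[\Phi_j\cf\!\sum_{k=0}^M u_k\bigr]
  = \sum_{\ell=-h}^{h}\cfi[\Phi_j\cf u_{j+\ell}],
\]
with the convention $u_k=0$ for $k<0$ or $k>M$. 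This is the crucial gain over the one-sided bound available in Lemma~\ref{Fball-lem}.

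Second, the chain of inequalities in \eqref{Fest-eq} goes through unchanged with $\tau=\min(1,p_1,\dots,p_n,q)$ and $k=j+\ell$: the $\tau$-triangle inequality, the embedding $\ell_q\imb\ell_\tau$, and Proposition~\ref{help1} (applied with a uniform $R=R(A,h)\ge1$ and any $\vec t$ satisfying $t_k<\min(1,p_1,\dots,p_k,q)$) yield
\[
  \Norm{\sum_{k=0}^M u_k}{\fspq}^{\tau}
  \le c\sum_{\ell=-h}^{h}2^{-\ell s\tau}\Norm{2^{(j+\ell)s}u_{j+\ell}}{L_{\vec p}(\ell_q)}^{\tau}
  \le c'F^{\tau},
\]
with $c'$ independent of $M$. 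The point is that the $\ell$-sum is now finite, so no constraint on $s$ is needed to make it converge; hence $\vec t$ can be picked freely in the admissible range, for arbitrary $s\in\R$.

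Third, to pass to the limit, I would argue exactly as at the end of the proof of Lemma~\ref{Fball-lem}: the corona condition implies the dyadic ball condition, so the partial sums $S_M=\sum_{k\le M}u_k$ form a Cauchy sequence in $F^{s',\vec a}_{\vec p,1}$ for some $s'<s$ (using Lemma~\ref{elementary}~(iii) and Lemma~\ref{CF-lem} for $\cs'$-convergence), with limit $u\in\cs'(\Rn)$. Fatou's lemma applied to the $L_{\vec p}(\ell_q)$-quasi-norm then gives the claimed bound $\norm{u}{\fspq}\le c'F$. The only mild obstacle is handling the endpoint $k=0$ in Step~1, where $\hat u_0$ lives in a ball rather than an annulus; this only contributes finitely many extra pairs $(j,0)$ with $j$ bounded, and is absorbed into the constant $h$.
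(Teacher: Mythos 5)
Your proposal is correct and matches the paper's own proof, which is stated as a one-line remark following Lemma~\ref{Fball-lem}: under the corona condition, $\cfi[\Phi_j\cf u_k]=0$ unless $|j-k|\le h$, so the sums over $\ell$ in \eqref{Fest-eq} run only over $\{-h,\dots,h\}$ and the restriction on $s$ disappears. You have simply spelled out the steps the paper leaves implicit.
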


For the Besov spaces, the dyadic ball and corona criteria follow by
interchanging the order of the $L_{\vec p}$ and $\ell_q$-norms 
in the proof Lemma~\ref{Fball-lem}, and by using Proposition~\ref{help1} 
for sequences having only a single non-trivial term. 
Thus one has the next result.

\begin{lem}
  \label{B-lem}
When $s>\sum_{k=1}^n \tfrac{a_k}{\min(1,p_1,\dots,p_k)}-|\vec a|$ 
for $0<\vec p\le\vec\infty$ and $0<q\le\infty$,
there exists $c>0$ such that, for every sequence
$(u_j)$ in $\cs'(\Rn)$ fulfilling
both \eqref{DBC-cnd} and 
\begin{equation}
  B:=(\sum_{j=0}^\infty 2^{sjq}\norm{u_j}{L_{\vec p}}^q)^{\frac1q}<\infty,
\end{equation}
the series $\sum_{j=0}^\infty u_j$ converges in $\cs'(\Rn)$ to a
$u\in\bspq(\Rn)$ for which
$\norm{u}{\bspq}\le cB$.

If $B<\infty$ and \eqref{DCC-cnd} hold, then
the convergence and $\norm{u}{\bspq}\le cB$ holds for all
$s\in\R$.
\end{lem}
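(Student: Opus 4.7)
The plan is to mirror the proof of Lemma~\ref{Fball-lem}, interchanging the roles of $L_{\vec p}$ and $\ell^q$. Set $\tau=\min(1,p_1,\ldots,p_n,q)$, which is the quasi-triangle exponent of $\bspq$ (see \eqref{tau-eq}). The dyadic ball condition \eqref{DBC-cnd} supplies a fixed $h\in\N$ with $\cfi[\Phi_j\cf u_k]=0$ whenever $k<j-h$, so for every $M\ge j-h$,
\[
\cfi[\Phi_j\cf \sum_{k\le M}u_k]=\sum_{\ell=-h}^{M-j}\cfi[\Phi_j\cf u_{j+\ell}].
\]

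Viewing the $\ell$-sum as an external sum in the quasi-Banach space $\bspq$, $\tau$-subadditivity yields
\[
\Nrm{\sum_{k\le M}u_k}{\bspq}^\tau\le \sum_{\ell=-h}^{\infty}\Bigl(\sum_{j\ge 0}2^{sjq}\Nrm{\cfi[\Phi_j\cf u_{j+\ell}]}{L_{\vec p}}^q\Bigr)^{\tau/q}.
\]
For each $\ell$ I would invoke Proposition~\ref{help1} applied to the trivial one-term sequence containing $u_{j+\ell}$; since the $\ell^q$-norm of a single-entry sequence reduces to an $L_{\vec p}$-norm for every $q$, the admissible $\vec t$ need only satisfy $t_k<\min(1,p_1,\ldots,p_k)$. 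With $R=\max(1,A)2^{\ell_+}$ this gives
\[
\Nrm{\cfi[\Phi_j\cf u_{j+\ell}]}{L_{\vec p}}\le c\,2^{\ell_+(\vec a\cdot\frac{1}{\vec t}-|\vec a|)}\Nrm{u_{j+\ell}}{L_{\vec p}}.
\]
Substituting, shifting the $j$-index so the inner $\ell^q$-sum becomes (a tail of) $B^q$, and absorbing the finitely many terms with $-h\le\ell<0$ into the constant, one arrives at
\[
\Nrm{\sum_{k\le M}u_k}{\bspq}^\tau\le c_1B^\tau\sum_{\ell\ge -h}2^{\ell_+\tau(-s+\vec a\cdot\frac{1}{\vec t}-|\vec a|)},
\]
which is a convergent geometric series once $\vec t$ is chosen with $t_k<\min(1,p_1,\ldots,p_k)$ and $s>\vec a\cdot\frac{1}{\vec t}-|\vec a|$; such a choice exists precisely under the hypothesis of the lemma.

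To conclude $\cs'$-convergence together with $\norm{u}{\bspq}\le cB$, I would apply the same bound to the tail sequence $(u_k)_{M<k\le M'}$: the right-hand side then contains only the tail of $B^\tau$, which vanishes as $M\to\infty$ by dominated convergence. So the partial sums are Cauchy in $\bspq$, converge there to some $u$, and in particular in $\cs'$ via Lemma~\ref{SFS-lem}; Fatou (or mere completeness) gives the quasi-norm bound. In the corona case \eqref{DCC-cnd} the stronger support property forces $\cfi[\Phi_j\cf u_k]=0$ unless $|j-k|\le h$, whence the $\ell$-sum runs over a \emph{finite} set, no geometric factor is required, and the condition on $s$ becomes unnecessary.

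The main obstacle is the bookkeeping in the main estimate: arranging the subadditivity step so that the factor $2^{\ell_+(\vec a\cdot 1/\vec t-|\vec a|)}$ from Proposition~\ref{help1} collects cleanly outside the $\ell^q$-sum, and observing that reducing to a one-term sequence eliminates the $q$-dependence in the admissible range of $\vec t$. Both points carry over almost verbatim from the proof of Lemma~\ref{Fball-lem}.
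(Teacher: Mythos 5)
Your proposal is correct and mirrors the paper's intended argument exactly: the paper indeed dispatches Lemma~\ref{B-lem} by noting that one interchanges the order of the $L_{\vec p}$- and $\ell_q$-norms in the proof of Lemma~\ref{Fball-lem} and applies Proposition~\ref{help1} to sequences with a single non-trivial term, which is precisely what lets you drop $q$ from the constraint on $\vec t$ so that only $t_k<\min(1,p_1,\dots,p_k)$ is needed. One small caveat in your convergence step: the claim that the partial sums are Cauchy in $\bspq$ via dominated convergence on the tail of $B$ breaks down when $q=\infty$, since the supremum over $j>M$ need not tend to $0$. The fix is the one already used in the paper's proof of Lemma~\ref{Fball-lem}: the uniform bound on the partial sums plus the elementary embedding into $B^{s',\vec a}_{\vec p,1}$ for $s'<s$ already give a Cauchy sequence in that larger space, hence $\cs'$-convergence to some $u$, and then Fatou on the main estimate delivers $\norm{u}{\bspq}\le cB$; the same repair carries over verbatim here.
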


By Lemma~\ref{Fcor-lem} and \ref{B-lem}, 
the choice of the Littlewood--Paley decomposition
and the constants are without 
significance for the $\fspq$ and $\bspq$ spaces.  
For completeness the next result is given.

\begin{lem}
Every differential operator of the form
$D^\alpha=D^{\alpha_1}_{x_1}\dots D^{\alpha_n}_{x_n}$ gives 
continuous maps
$\fspq(\Rn)\to F^{s-\alpha\cdot\vec a,\vec a}_{\vec p,q}(\Rn)$ and
$\bspq(\Rn)\to B^{s-\alpha\cdot\vec a,\vec a}_{\vec p,q}(\Rn)$, for every $s\in\R$.
\end{lem}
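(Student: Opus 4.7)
The plan is to apply $D^\alpha$ termwise to a Littlewood--Paley decomposition of $u$ and then invoke the dyadic corona criteria from Lemmas~\ref{Fcor-lem} and~\ref{B-lem}. These criteria impose no restriction on $s$ under the corona condition \eqref{DCC-cnd}, which is tailor-made for the present situation. Writing $u=\sum_{j=0}^\infty u_j$ with $u_j=\Phi_j(D)u$ (convergent in $\cs'(\Rn)$ by Lemma~\ref{CF-lem}) and using continuity of $D^\alpha\colon\cs'(\Rn)\to\cs'(\Rn)$, we have $D^\alpha u=\sum_{j=0}^\infty D^\alpha u_j$ in $\cs'$. Since $\supp\cf(D^\alpha u_j)\subset\supp\Phi_j$, the dyadic corona condition is met, and it remains only to bound $(D^\alpha u_j)$ in the relevant $L_{\vec p}(\ell_q)$-quasi-norm with weight $2^{j(s-\alpha\cdot\vec a)}$ by $c\norm{u}{\fspq}$ (and similarly in the Besov case).

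For this I would choose $\tilde\Phi\in C_0^\infty(\Rn)$ equal to $1$ on $\supp\Phi_1$ with support in some $B_{\vec a}(0,R)$, and set $\tilde\Phi_j(\xi):=\tilde\Phi(2^{-j\vec a}\xi)$ for $j\ge 1$, so that $\tilde\Phi_j\Phi_j=\Phi_j$. Then $D^\alpha u_j=m_j(D)u_j$ with symbol $m_j(\xi)=(\im\xi)^\alpha\tilde\Phi_j(\xi)$, which enjoys the decisive scaling identity
\[
 m_j(\xi)=2^{j\alpha\cdot\vec a}\phi(2^{-j\vec a}\xi),\qquad \phi(\eta):=(\im\eta)^\alpha\tilde\Phi(\eta)\in C_0^\infty(\Rn).
\]
Since $\supp\cf u_j\subset\{|\xi|_{\vec a}\le A\,2^j\}$ for some fixed $A$, Proposition~\ref{help1} applies to $\phi$ and $(u_j)_{j\ge 1}$, giving
\[
 \Norm{2^{j(s-\alpha\cdot\vec a)}m_j(D)u_j}{L_{\vec p}(\ell_q)}
 =\Norm{2^{js}\phi(2^{-j\vec a}D)u_j}{L_{\vec p}(\ell_q)}
 \le c\Norm{2^{js}u_j}{L_{\vec p}(\ell_q)}\le c\norm{u}{\fspq}.
\]

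The $j=0$ term is separately bounded: $D^\alpha u_0$ has fixed compact spectrum, so by the classical (single-function) Nikol$'$skij inequality its $L_{\vec p}$-norm is at most $c\norm{u_0}{L_{\vec p}}\le c\norm{u}{\fspq}$, which harmlessly enters the $\ell_q$-sum as a single term. Assembling this with Lemma~\ref{Fcor-lem} identifies $\sum D^\alpha u_j$ as an element of $F^{s-\alpha\cdot\vec a,\vec a}_{\vec p,q}(\Rn)$ with norm $\le c\norm{u}{\fspq}$, and uniqueness of $\cs'$-limits equates the sum with $D^\alpha u$. For the Besov case one runs the same argument with the $L_{\vec p}(\ell_q)$-norm replaced by the $\ell_q(L_{\vec p})$-norm and with Lemma~\ref{B-lem} substituted for Lemma~\ref{Fcor-lem}; Proposition~\ref{help1} is then only needed in its single-function form, i.e.\ as the classical Nikol$'$skij inequality, which is available for all admissible $\vec p$. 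The sole (minor) obstacle is the clean separation of the $j=0$ term from the scaling identity, but the corona criteria are insensitive to finitely many exceptional indices.
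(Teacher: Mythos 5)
Your argument is correct and follows essentially the same route as the paper: decompose $D^\alpha u=\sum_j D^\alpha(\Phi_j(D)u)$, observe that the dyadic corona condition is preserved, extract the factor $2^{j\alpha\cdot\vec a}$ from the multiplier by quasi-homogeneous scaling, and invoke Proposition~\ref{help1} together with the corona criteria (Lemmas~\ref{Fcor-lem} and~\ref{B-lem}). One small mislabel: the single-term $L_{\vec p}$-bound you invoke for $D^\alpha u_0$ (and in the Besov case) is the single-function instance of Proposition~\ref{help1}, i.e.\ a Fourier-multiplier estimate in the spirit of Proposition~\ref{Marschall}, rather than the Nikol$'$skij inequality itself, which concerns $L_{\vec p}\to L_{\vec r}$ with $\vec r\ge\vec p$; the substance is nevertheless correct, and indeed this version is needed precisely because Young's inequality is unavailable when some $p_k<1$.
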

\begin{proof}
For the scale $\fspq$,
Lemma~\ref{Fcor-lem} and Proposition~\ref{help1} applied to
the decomposition
$D^\alpha u=\sum_{j=0}^\infty (D^\alpha \cfi\Phi_j)*u$ give at once that
$D^\alpha$ has order $\alpha\cdot\vec a$.
The Besov case is similar.
\end{proof}

As another consequence of the dyadic corona criterion, we sketch a

\begin{proof}[{P\,r\,o\,o\,f} of Lemma~\ref{SFS-lem}]
The embeddings \eqref{SFS'-eq}--\eqref{SBS'-eq} were shown in
\cite[Prop.~10]{JoSi}. 
The density of $\cs\subset\fspq$ follows from
Lemma~\ref{Fcor-lem}: $u^N:=\sum_{j=0}^N \Phi_j(D)u$ converges to $u$ in
$\fspq$, because for $u-u^N=\sum_{j>N}\Phi_j(D)u$ the number $F\to0$ as
$N\to\infty$ by dominated convergence ($q<\infty$). The set of $g\in
L_{\vec p}\cap\cs'$ with $\supp \cf g\subset B_{\vec a}(0,2^{N+1})$ is 
embedded into $\fspq$, for $g=g+0+\dots$ fulfils \eqref{DCC-cnd}
with $A=2^{N+1}$. Therefore the convergence of 
$u^N\cdot c\cfi\Psi_0(\varepsilon\cdot)\in \cs$ to $u^N$ in $L_{\vec p}$ for
$\varepsilon\to 0$ implies
$\norm{c\cfi\Psi_0(\varepsilon\cdot)u^N-u^N}{\fspq}\to0$. A similar reasoning
works for $\bspq$.
\end{proof}

\bigskip

Occasionally it is useful to have a corona criterion based on powers of
$2^\lambda$ for some $\lambda>0$.

\begin{lem}
  \label{Fcor'-lem}
When $s\in\R$ and
$0<\vec p< \vec\infty$, $0<q\le\infty$,
there exists $c>0$ such that, for every sequence
$(u_j)$ in $\cs'(\Rn)$ fulfilling $\supp \cf u_0\subset B_{\vec a}(0,A)$ and
\begin{gather}
    \supp \cf u_j\subset \{\,\xi\mid  \tfrac{1}{A}2^{\lambda j}
   \le |\xi|_{\vec a}\le A 2^{\lambda j} \,\}
 \quad\text{for}\quad j\ge1,
  \label{Ilambda-eq}
    \\
  F_{\lambda}:=
  \Norm{(\sum_{j=0}^\infty |2^{\lambda sj}u_j|^q)^{\frac1q}}
       {L_{\vec p}}<\infty,
\end{gather}
the series $\sum_{j=0}^\infty u_j$ converges in $\cs'(\Rn)$ to a
$u\in\fspq(\Rn)$ for which
$\norm{u}{\fspq}\le cF_\lambda$.
\end{lem}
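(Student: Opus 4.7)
The plan is to reduce the statement to the dyadic corona criterion already established in Lemma~\ref{Fcor-lem}, by rescaling the anisotropy and exploiting the identification $\fspq(\Rn)=F^{\lambda s,\lambda\vec a}_{\vec p,q}(\Rn)$ with equivalent quasi-norms, which is Lemma~\ref{Flambda-lem}. The idea is that the parameter $\lambda$ in \eqref{Ilambda-eq} and in $F_\lambda$ is absorbed simply by passing from $(\vec a,s)$ to $(\lambda\vec a,\lambda s)$.

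First I would record the elementary identity $|\xi|_{\lambda\vec a}=|\xi|_{\vec a}^{1/\lambda}$, which follows directly from the defining equation \eqref{tax-eq}: if $t^{-\vec a}\xi\in S^{n-1}$ then also $(t^{1/\lambda})^{-\lambda\vec a}\xi=t^{-\vec a}\xi\in S^{n-1}$, so $|\xi|_{\lambda\vec a}=t^{1/\lambda}=|\xi|_{\vec a}^{1/\lambda}$. Consequently, after renaming the constant, the spectral condition \eqref{Ilambda-eq} on $u_j$ for $j\ge1$ reads $A^{-1/\lambda}2^j\le|\xi|_{\lambda\vec a}\le A^{1/\lambda}2^j$, which is exactly the standard dyadic corona condition \eqref{DCC-cnd} relative to the rescaled anisotropy $\lambda\vec a$; and $\supp\cf u_0\subset B_{\vec a}(0,A)=B_{\lambda\vec a}(0,A^{1/\lambda})$ takes care of the initial term.

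Next, the quantity $F_\lambda$ can be rewritten as $\Norm{(\sum_j 2^{j(\lambda s)q}|u_j|^q)^{1/q}}{L_{\vec p}}$, which is precisely the hypothesis entering Lemma~\ref{Fcor-lem} when that lemma is applied with anisotropy $\lambda\vec a$ and smoothness index $\lambda s$. Since Lemma~\ref{Fcor-lem} is valid for any anisotropy and for every real smoothness parameter, it delivers $\cs'$-convergence of $\sum_j u_j$ to a distribution $u\in F^{\lambda s,\lambda\vec a}_{\vec p,q}(\Rn)$ together with the bound $\norm{u}{F^{\lambda s,\lambda\vec a}_{\vec p,q}}\le cF_\lambda$. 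A final application of Lemma~\ref{Flambda-lem} converts the latter into $\norm{u}{\fspq}\le c'F_\lambda$, yielding the claim.

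There is no serious obstacle; the argument is a straightforward scaling reduction. The only point deserving a moment's care is the exact behaviour of the anisotropic distance under $\vec a\mapsto\lambda\vec a$, which ensures that the translation between the $\lambda$-modified corona condition and the standard corona condition \eqref{DCC-cnd} is exact and not merely up to an inessential perturbation.
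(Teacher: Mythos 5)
Your argument is circular in the paper's logical development. You invoke Lemma~\ref{Flambda-lem} (the identification $\fspq=F^{\lambda s,\lambda\vec a}_{\vec p,q}$ with equivalent quasi-norms) as an already-available tool, but in this paper Lemma~\ref{Flambda-lem} is stated and proved \emph{after} Lemma~\ref{Fcor'-lem}, and its proof explicitly cites Lemma~\ref{Fcor'-lem}: one passes to the Littlewood--Paley decomposition $1=\sum\Phi^{\vec b}_j$ for $\vec b=\lambda\vec a$, observes that the resulting pieces have their spectra in coronas of the form $c_1 2^{\lambda j}\le|\xi|_{\vec a}\le c_2 2^{\lambda j}$, and this is precisely the $\lambda$-corona condition handled by Lemma~\ref{Fcor'-lem}. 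So deriving Lemma~\ref{Fcor'-lem} from Lemma~\ref{Flambda-lem} together with Lemma~\ref{Fcor-lem} reverses the intended implication and proves nothing new unless you first give an independent proof of the reparametrisation invariance. There is no such independent proof at hand: changing the anisotropy from $\vec a$ to $\lambda\vec a$ changes which sets count as dyadic coronas, so the two Littlewood--Paley decompositions do not match, and controlling the cross-terms \emph{is} the content of a $\lambda$-corona criterion.

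The paper instead proves Lemma~\ref{Fcor'-lem} by a direct estimate, mimicking the proof of Lemma~\ref{Fcor-lem}. Condition \eqref{Ilambda-eq} guarantees an $h\in\N$ with $\Phi_j\cf u_k=0$ unless $\lfloor j/\lambda\rfloor-h\le k\le\lfloor j/\lambda\rfloor+h$, so after writing $k=[j/\lambda]+\nu$ with $|\nu|\le h+1$ one runs the analogue of \eqref{Fest-eq}, applies Proposition~\ref{help1} with $R=A2^{\lambda|\nu|}$, and then observes that because of $2^{sj}\le c\,2^{s\lambda[j/\lambda]}$ the reindexed $\ell_q$-norm $\norm{2^{js}u_{[j/\lambda]+\nu}}{\ell_q}$ is controlled by $\norm{2^{s\lambda m}u_m}{\ell_q}$ (the map $j\mapsto[j/\lambda]$ is at most $[\lambda]+1$ to one for $\lambda\ge1$, and lacunary for $0<\lambda<1$). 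This yields a uniform bound $\Norm{\sum_{k\le M}u_k}{\fspq}\le cF_\lambda$, after which convergence and the final estimate follow exactly as in the proof of Lemma~\ref{Fcor-lem}. You should give a proof along these lines rather than appealing to Lemma~\ref{Flambda-lem}.
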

\begin{proof}
Note that \eqref{Ilambda-eq} 
gives an $h\in \N$ such 
$\Phi_j\cf u_k=0$ unless $\tfrac{j}{\lambda}-h\le k\le
\tfrac{j}{\lambda}+h$. With $k=[j/\lambda]+\nu$ 
($[\cdot]$ is the integer part), 
a modification of \eqref{Fest-eq} gives
\be \label{Fest'-eq}
\begin{split}
 \Norm{\sum_{k\le M} u_k}{\fspq}^\tau
  &\le \Norm{\big(\sum_{j=0}^\infty (2^{sj\tau}\sum_{\nu =-h}^{h+1} 
         |\cfi[\Phi_j\cf u_{[j/\lambda]+\nu}]|^\tau)^{q/\tau} \big)^{\tau/q}}
         {L_{\vec p/\tau}}
\\
  & \le   c
   \sum_{|\nu|\le h+1} 
  (A2^{|\nu|\lambda})^{\tau(\vec{a} \cdot \frac{1}{\vec{t}} -|\vec a|)}
  \Norm{2^{js}u_{[j/\lambda]+\nu}}{\lpvec (\ell_q)}^\tau.
\end{split}
\ee
Here the last inequality results from Proposition~\ref{help1}, for
$\xi\in\supp u_{[j/\lambda]+\nu}$ entails 
$|\xi|_{\vec a}\le A2^{\lambda([j/\lambda]+\nu)}\le (A2^{\lambda|\nu|})2^j$.
It is clear that $2^{sj}\le c2^{s\lambda[j/\lambda]}$. 
Therefore $m=[j/\lambda]$ gives
$\norm{2^{js}u_{[j/\lambda]+\nu}}{\ell_q}\le c_{\lambda}
\norm{2^{sm\lambda}u_{m}}{\ell_q}$, for the sequence 
$(2^{js}u_{[j/\lambda]+\nu})_{j\in\N_0}$ is either lacunary for
$0<\lambda<1$ or, for $\lambda\ge1$, it has every $u_{m+\nu}$ repeated 
at most $[\lambda]+1$ times. Consequently
$ \Norm{\sum_{k\le M} u_k}{\fspq}\le cF_\lambda$ for all $M$, so that
convergence and the estimate follow as in the proof of Lemma~\ref{Fcor-lem}.
\end{proof}
 
For example Lemma~\ref{Fcor'-lem} gives invariance of 
the spaces 
$\fspq$ under the 
reparametrisation $(s,\vec a)\mapsto (\lambda s,\lambda\vec a)$:

\begin{lem}   \label{Flambda-lem}
$\fspq(\Rn)=F^{\lambda s,\lambda\vec a}_{\vec p,q}(\Rn)$ 
for every $\lambda>0$, and the quasi-norms are equivalent.
\end{lem}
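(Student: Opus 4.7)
The plan is to apply the corona criterion of Lemma~\ref{Fcor'-lem} twice, symmetrically in $\vec a$ and $\lambda\vec a$.

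First I would record the key geometric fact that $|x|_{\lambda\vec a}=|x|_{\vec a}^{1/\lambda}$ for all $x\in\Rn$ and all $\lambda>0$. This is immediate from the defining relation \eqref{tax-eq}: replacing $\vec a$ by $\lambda\vec a$ there shows that $t=|x|_{\lambda\vec a}$ is characterised by $t^\lambda=|x|_{\vec a}$. Consequently, if $\{\Phi_j^{\lambda}\}_{j\ge0}$ denotes a quasi-homogeneous Littlewood--Paley decomposition adapted to $\lambda\vec a$, so that $\supp\Phi_j^{\lambda}\subset\{\xi\mid\tfrac{11}{20}2^{j}\le|\xi|_{\lambda\vec a}\le\tfrac{13}{10}2^{j}\}$ for $j\ge1$, then re-expressing the support in terms of $|\cdot|_{\vec a}$ yields
\begin{equation*}
\supp\Phi_j^{\lambda}\subset\bigl\{\xi\bigm|\tfrac{1}{A}\,2^{\lambda j}\le|\xi|_{\vec a}\le A\,2^{\lambda j}\bigr\}
\end{equation*}
for some $A\ge 1$ independent of $j$ (and $\Phi_0^{\lambda}$ is supported in a ball of $|\cdot|_{\vec a}$-radius $\le A$). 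This is precisely the corona condition \eqref{Ilambda-eq} of Lemma~\ref{Fcor'-lem}.

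Next, given $u\in F^{\lambda s,\lambda\vec a}_{\vec p,q}(\Rn)$, I would set $u_j:=\cfi[\Phi_j^{\lambda}\cf u]$. By the preceding step the $u_j$ satisfy \eqref{Ilambda-eq} with respect to $\vec a$ and exponent $\lambda$, while the associated quantity $F_\lambda$ from Lemma~\ref{Fcor'-lem} coincides, by the very definition of the $F^{\lambda s,\lambda\vec a}_{\vec p,q}$ quasi-norm via $\{\Phi_j^{\lambda}\}$, with $\norm{u}{F^{\lambda s,\lambda\vec a}_{\vec p,q}}$. Since $\sum u_j=u$ in $\cs'$, Lemma~\ref{Fcor'-lem} therefore delivers $u\in\fspq$ together with the bound
\begin{equation*}
\norm{u}{\fspq}\le c\,\norm{u}{F^{\lambda s,\lambda\vec a}_{\vec p,q}}.
\end{equation*}
For the reverse inequality I would swap the roles of $\vec a$ and $\lambda\vec a$: the standard $\vec a$-decomposition $\{\Phi_j\}$ gives $\supp\Phi_j\subset\{|\xi|_{\vec a}\sim 2^{j}\}$, which in the $|\cdot|_{\lambda\vec a}$-scale reads $|\xi|_{\lambda\vec a}\sim 2^{j/\lambda}$; this is \eqref{Ilambda-eq} for $\lambda\vec a$ with exponent $1/\lambda$ in place of $\lambda$. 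Applying Lemma~\ref{Fcor'-lem} in the $F^{\lambda s,\lambda\vec a}_{\vec p,q}$-scale with smoothness $\lambda s$ and parameter $1/\lambda$ then yields, since $(1/\lambda)\cdot(\lambda s)=s$, the estimate $\norm{u}{F^{\lambda s,\lambda\vec a}_{\vec p,q}}\le c'\norm{u}{\fspq}$.

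The only real point to verify is bookkeeping: that the exponents in each invocation of Lemma~\ref{Fcor'-lem} line up correctly under the rescaling $(s,\vec a)\leftrightarrow(\lambda s,\lambda\vec a)$. No condition on $s$ has to be checked, because Lemma~\ref{Fcor'-lem} is valid for all $s\in\R$, and no restriction on $\vec p,q$ beyond the standing assumption $0<\vec p<\vec\infty$, $0<q\le\infty$ appears. Thus the two inequalities combine to the claimed identity with equivalence of quasi-norms.
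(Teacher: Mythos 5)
Your proposal is correct and follows essentially the same route as the paper: observe $|\xi|_{\lambda\vec a}^{\lambda}=|\xi|_{\vec a}$, note that the $\lambda\vec a$-adapted Littlewood--Paley decomposition then satisfies the corona condition \eqref{Ilambda-eq} at scale $2^{\lambda j}$ in $|\cdot|_{\vec a}$, apply Lemma~\ref{Fcor'-lem}, and obtain the reverse bound by symmetry in $(\vec a,\lambda)\leftrightarrow(\lambda\vec a,1/\lambda)$. Your write-up is somewhat more explicit than the paper's one-paragraph proof, but the key lemma and the two-sided bookkeeping are identical.
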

\begin{proof}
For 
$\vec b=\lambda\vec a$ the definition gives 
$|\xi|_{\vec b}^\lambda=|\xi|_{\vec a}$, so that the Littlewood--Paley
decomposition $1=\sum _{j=0}^\infty \Phi^{\vec b}_j$
associated with $\vec b$
yields functions that for $j\ge1$ are equal to $1$ in the set where
$(\tfrac{13}{20})^\lambda 2^{\lambda j}\le|\xi|_{\vec a}\le
 (\tfrac{11}{10})^\lambda 2^{\lambda j}$. 
Hence Lemma~\ref{Fcor'-lem} gives $\norm{u}{\fspq}\le c
\norm{u}{F^{\lambda s,\lambda\vec a}_{\vec p,q}}$. Since $\vec a$ and
$\lambda>0$ are arbitrary, the opposite inequality also holds.
\end{proof}

\begin{rem}   \label{veca-rem}
In view of this lemma, we may assume that all $a_k\ge1$, which is convenient 
in Section~\ref{prfs-sect} below. 
However, this is immaterial for the statements in Section~\ref{main-sect},
since the inequalities \eqref{spq1-eq}, \eqref{spq-eq} etc.\  
hold for some $s$, $\vec a$ if and only if
they hold for all $\lambda s$, $\lambda\vec a$, $\lambda>0$.
Hence $\vec a\in \,]0,\infty[\,^n$ is assumed in Section~\ref{main-sect}.
\end{rem}

\begin{rem}
Since there are few general references to the mixed
norm spaces $\fspq$, we note that the reader may find the
necessary theory here and in \cite{JoSi}.  
\end{rem}

%%%%%%%%%%%%%%%%%%%%%%%%%%%%%%%%%%%%%%%%%%%%%
%% Section 4
%%%%%%%%%%%%%%%%%%%%%%%%%%%%%%%%%%%%%%%%%%%%%
\section{Proofs}
  \label{prfs-sect}

\subsection{The general necessary conditions} We first give the proof of 
Lemma~\ref{spq-lem}, since this just amounts to a calculation of some norms
in $\fspq$ of suitably chosen functions.
Recall that we can normalise to $\min(a_1,\dots,a_n)=1$, cf
Remark~\ref{veca-rem}. 

\subsubsection{Examples}

To have a convenient set-up, we shall consider traces on the 
hyperplane $x_m=0$ for arbitrary $m\in\{1,\dots,n\}$. The remaining $n-1$ variables are split in two groups $x_\ge$ and $x_<$.
The reason for this labelling will be clear later when a $\vec p$ is fixed: 
the components $p_k$ with $k\ne m$ splits naturally into the groups
$p_\ge$ and $p_<$ in which $p_k\ge1$, respectively $p_k<1$; 
accordingly $x_\ge$, $x_<$ are defined from the same indices.

Let $f$, $g\in \cs(\R)$ be fixed, as we may, such that 
$\int_{\R}f(t)\,dt =1$, $g(0)=1$ and, with $a_0=\max(a_1,\dots,a_n)$,
\begin{equation} 
  \supp\hat f\subset\{\,|\tau|<1/(10n)^{a_0} \,\},\qquad
  \supp\hat g\subset\{\,(\tfrac8{10})^{a_m}\le|\tau|\le1 \,\}.
\end{equation}
Introducing the tensor product
\begin{equation}
  w_l(x)= (\prod_{x_{\ge}}f(x_k))\otimes  g(2^{la_m}x_m)
    \otimes (\prod_{x_{<}} 2^{la_k}f(2^{la_k}x_k))
\end{equation}
we shall estimate the Schwartz function $v_j=\tfrac1j\sum_{l=j+1}^{2j} w_l$.
Note first that for $\xi\in\supp\hat w_l$, one has for 
the vector $\eta=\xi-\xi_m e_m$
(formed by resetting the $m^{\op{th}}$ coordinate to $0$)
that, since $\tfrac{a_0}{a_k}\ge1$ for all $k$,
\begin{equation}
  |\eta|_{\vec a} \le \sum_{k\ne m} |\xi_k|^{1/a_k}
  \le \sum_{x_\ge} (10n)^{-\frac{a_0}{a_k}}
     + \sum_{x_<} 2^l(10n)^{-\frac{a_0}{a_k}}\le \tfrac{n-1}{10n}\cdot 2^l.
\end{equation}
Using the triangle inequality for $|\cdot|_{\vec a}$,
\be
  \tfrac{7}{10}2^l\le |\xi_m|^{1/a_m}-|\eta|_{\vec a}\le
  |\xi|_{\vec a}\le|\xi_m|^{1/a_m}+|\eta|_{\vec a}< \tfrac{11}{10}2^l.
\ee
This means that every $\xi\in\supp\hat w_l$ satisfies $\Phi_l(\xi)=1$, 
for this identity holds where
$\tfrac{13}{20}2^l\le|\xi|_{\vec a}\le \tfrac{11}{10}2^l$. 
Consequently the $\Phi_l$ disappear from the norms of $v_j$, e.g.\ 
\begin{equation}
  \norm{v_j}{\fspq}=\tfrac1j \Norm{(\sum_{l=j+1}^{2j} 
    2^{slq}|w_l(\cdot)|^q)^{1/q}}{L_{\vec p}}.
  \label{normvj-eq}
\end{equation}
For certain triples $(s,\vec p,q)$ this can be calculated precisely.

\begin{lem} \label{normvj-lem}
  Let $\vec p$ be a vector in $]0,\infty]^n$, and let $p_\ge$ and $p_<$ be the 
above mentioned splitting corresponding to a fixed $m$.

$1^\circ$~For $s=\tfrac{a_m}{p_m}+\sum_{k\ne m}(\tfrac{a_k}{p_k}-a_k)_+$
it holds for every $q$ that
\begin{equation}
  \norm{v_j}{\bspq}=c\cdot j^{\frac1q-1}.
\end{equation}

$2^\circ$~If $p_m>1$ and $p_k\ge1$ for $k\ne m$, then for
$s=\tfrac{a_m}{p_m}$, 
\begin{equation}
  \norm{v_j}{F^{s,\vec a}_{\vec p,p_m}}
  =c\cdot j^{\frac1{p_m}-1}.
\end{equation}
\end{lem}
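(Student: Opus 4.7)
The plan is to exploit the tensor-product structure of $w_l$ together with the already-established identity $\hat w_l\,\Phi_l=\hat w_l$, which reduces the $F$- and $B$-norms of $v_j$ to elementary one-dimensional computations. First I would observe that, since $w_l$ is a rank-one tensor, the mixed $L_{\vec p}$-norm factorises as a product of one-dimensional norms. A change of variables gives
\[
\norm{g(2^{l a_m}\cdot)}{L_{p_m}(\R)}=\|g\|_{p_m}\,2^{-l a_m/p_m},
\qquad
\norm{2^{l a_k}f(2^{l a_k}\cdot)}{L_{p_k}(\R)}=\|f\|_{p_k}\,2^{l a_k(1-1/p_k)},
\]
while the factors for $k\in x_\ge$ (where $p_k\ge 1$) contribute the $l$-independent constants $\|f\|_{p_k}$. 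Collecting these factors, one checks that with the choice $s=\frac{a_m}{p_m}+\sum_{k\neq m}(\frac{a_k}{p_k}-a_k)_+$ (where the positive part picks out exactly the indices in $x_<$, on which $1-1/p_k<0$), the quantity $2^{sl}\nrm{w_l}{L_{\vec p}}$ is a \emph{constant} $C$, independent of $l$.

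For part $1^\circ$, the Besov norm is immediate: since $\hat v_j$ is a sum of pieces each already localised where $\Phi_l=1$, the definition of $\bspq$ together with the above identity yields
\[
\norm{v_j}{\bspq}
=\frac1j\Bigl(\sum_{l=j+1}^{2j}(2^{sl}\nrm{w_l}{L_{\vec p}})^q\Bigr)^{1/q}
=\frac{C}{j}\cdot j^{1/q}=C\,j^{1/q-1}.
\]

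For part $2^\circ$, the delicate point is that for the Lizorkin--Triebel norm the sum over $l$ is inside the $L_{\vec p}$-norm. Here I would use that, under the assumptions ($p_m>1$, all other $p_k\ge 1$, so $x_<$ is empty), $w_l$ reduces to $\bigl(\prod_{k\neq m}f(x_k)\bigr)\cdot g(2^{l a_m}x_m)$. The crucial observation is that
\[
\Bigl(\sum_{l=j+1}^{2j}2^{slp_m}|w_l(x)|^{p_m}\Bigr)^{1/p_m}
=\Bigl(\prod_{k\neq m}|f(x_k)|\Bigr)\cdot G_j(x_m),
\qquad
G_j(x_m):=\Bigl(\sum_{l=j+1}^{2j}2^{slp_m}|g(2^{l a_m}x_m)|^{p_m}\Bigr)^{1/p_m},
\]
so the expression is again a tensor product and its mixed norm factorises. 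The one-dimensional $L_{p_m}$-norm of $G_j$ is computed by interchanging sum and integral: since $s=a_m/p_m$ gives $2^{slp_m}\cdot 2^{-l a_m}=1$ for every $l$, one obtains $\nrm{G_j}{L_{p_m}(\R)}^{p_m}=j\,\|g\|_{p_m}^{p_m}$, and hence $\norm{v_j}{F^{s,\vec a}_{\vec p,p_m}}=C\,j^{1/p_m-1}$.

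The main technical point is the tensor-product factorisation in part $2^\circ$ --- it works precisely because the sum exponent $q=p_m$ matches the index $m$ along which the scaling of $g$ occurs, so that the $\ell_{p_m}$-sum in $l$ can be absorbed into the $L_{p_m}$-integral in $x_m$; this is what makes the computation exact. Everything else is a matter of bookkeeping with the one-dimensional rescalings and the definition of $s$.
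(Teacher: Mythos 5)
Your proposal is correct and takes essentially the same route as the paper: both reduce to the identity $\norm{v_j}{\cdot}=\tfrac1j\norm{(\sum_l 2^{slq}|w_l|^q)^{1/q}}{L_{\vec p}}$ established just before the lemma, exploit the tensor-product structure of $w_l$ so the mixed $L_{\vec p}$-norm factorises into one-dimensional norms, and then do the elementary rescaling bookkeeping. You spell out the dilation computations and the $q=p_m$ absorption step somewhat more explicitly than the paper, but the argument is the same.
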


\begin{proof}
In analogy with \eqref{normvj-eq} above, 
$  \norm{v_j}{\bspq}=\tfrac1j (\sum_{l=j+1}^{2j} 
    2^{slq}\norm{w_l}{L_{\vec p}}^q)^{1/q}$.
Since the $L_{\vec r}$-norm respects the tensor products entering $w_l$, 
and since $2^{l(\frac {a_m}{p_m}+\sum_{p_<}(\frac{a_k}{p_k}-a_k))}$
is absorbed by the dilations, 
$\norm{v_j}{\bspq}
  =
  \tfrac 1j \prod_{k\ne m}\nrm{f}{p_k}
  (\sum_{l=j+1}^{2j} \nrm{g}{p_m}^q  )^{\tfrac{1}{q}}
  =
  c j^{\frac1q-1}
$.

In case $2^\circ$, a similar procedure applies to \eqref{normvj-eq};
the group $x_<$ is empty by assumption, so 
\begin{equation}
%  \begin{split}
  \norm{v_j}{F^{s,\vec a}_{\vec p,p_m}} 
  = \tfrac{1}{j} \prod_{k\ne m}\nrm{f}{p_k}
  (\sum_{l=j+1}^{2j}   
   \int_\R   2^{l a_m} |g(2^{la_m}x_m)|^{p_m}\,dx_m)^{\frac1{p_m}}
%\\
  = c\cdot j^{\frac1{p_m}-1}
%  \end{split}
\end{equation}
since the factors involving 
$f$ do not depend on the summation index.
\end{proof}

The interest of Lemma~\ref{normvj-lem}
comes from the obvious fact that
\begin{equation}
  \gamma_{0,m} v_j\to \delta_0(x_{<})
  \otimes \prod_{x_{\ge}} f(x_k)\quad\text{for}\quad j\to\infty
  \label{g0mvj-eq}
\end{equation}
(which means $f(x_1)\otimes\dots\otimes f(x_n)$ if $x_{<}$ is empty).
From this we get the
 
\subsubsection{Proof of Lemma~\ref{spq-lem}}
Given that $\gamma_{0,m}\colon \fspq(\Rn)\to\cd'(\Rnl)$
is continuous for some $(s,\vec p,q)$,
we set $t=\frac{a_m}{p_m}+\sum_{p_k\ne m}(\frac{a_k}{p_k}-a_k)_+$.

Then $s<t$ cannot hold, 
for else $B^{t,\vec a}_{\vec p,2} \imb \fspq$, and
this embedding would be incompatible with the
continuity of $\gamma_{0,m}$, since by Lemma~\ref{normvj-lem}
the $v_j$ tend to $0$ in $B^{t,\vec a}_{\vec p,2}$ and a fortiori in
$\fspq$
(whilst $\gamma_{0,m}v_j\not\to 0$, cf.~\eqref{g0mvj-eq}). 
Therefore the continuity implies 
$s\ge\frac {a_m}{p_m}+\sum_{p_k\ne m}(\frac{a_k}{p_k}-a_k)_+$.
  
Similarly $2^\circ$ of Lemma~\ref{normvj-lem} shows that
in case $p_k\ge1$ for $k\ne m$, 
the trace $\gamma_{0,m}$ is only continuous from $\fspq$ on the borderline 
(which is $s=a_m/p_m$ then) if $p_m\le1$.

\subsection{Proof of Theorem~\ref{K-thm}}
We shall proceed with Theorem~\ref{K-thm}, for later we
draw on the properties of the extension operator, during the proof of
the theorems on the trace.

The next well-known lemma plays a significant role in the proofs, 
e.g.\ because the property of $K_1$ and $K_n$ that they
map into $\bigcap_{0<q\le\infty}\fspq$ is a consequence of the fact that 
both \eqref{lspos-ineq} and \eqref{lsneg-ineq} hold for \emph{any}
$\ell_r$-norm, $0<r\le\infty$.

\begin{lem}
  \label{Y-lem}
If $(b_j)_{j\in N_0}$ is a sequence of complex numbers,
$s>0$ and $q$, $r\in \,]0,\infty]$, there is a constant 
$c=c(s,q,r)$ such that (with sup-norm over $k$ for $r=\infty$)
\begin{align}
  \Norm{\{2^{sj}(\sum_{k=j}^{\infty}|b_k|^r)^{1/r}\}_{j=0}^\infty}{\ell_q}
  &\le c\Norm{\{2^{sj}b_j\}_{j=0}^\infty}{\ell_q}
  \label{lspos-ineq}  \\
  \Norm{\{2^{-sj}(\sum_{k=0}^{j}|b_k|^r)^{1/r}\}_{j=0}^\infty}{\ell_q}
  &\le c\Norm{\{2^{-sj}b_j\}_{j=0}^\infty}{\ell_q}.
  \label{lsneg-ineq}
\end{align}
\end{lem}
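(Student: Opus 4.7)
The plan is to reduce both estimates to standard Hardy-type inequalities via a change of variables, then exploit the geometric decay. First, for the inequality \eqref{lspos-ineq}, I set $a_k:=2^{sk}b_k$, so that
\begin{equation*}
  2^{sj}\Bigl(\sum_{k\ge j}|b_k|^r\Bigr)^{1/r}
  =\Bigl(\sum_{k\ge j}2^{-sr(k-j)}|a_k|^r\Bigr)^{1/r}
  =\Bigl(\sum_{l\ge 0}2^{-srl}|a_{j+l}|^r\Bigr)^{1/r},
\end{equation*}
and the task becomes to bound the $\ell_q(j)$-norm of this expression by $c\,\nrm{(a_j)}{\ell_q}$. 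Analogously, setting $a_k:=2^{-sk}b_k$ in \eqref{lsneg-ineq} reduces matters to bounding the $\ell_q(j)$-norm of $(\sum_{l=0}^{j}2^{-srl}|a_{j-l}|^r)^{1/r}$.

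Next, I would reduce to the case $r\le q$. Indeed, since $\ell_{r'}\imb\ell_r$ with norm $1$ for every $r'\le r$, the quantity $(\sum_{k\ge j}|b_k|^r)^{1/r}$ is majorised by $(\sum_{k\ge j}|b_k|^{r'})^{1/r'}$; so picking $r':=\min(r,q)$ in place of $r$ does no harm (the constant on the right will only improve once the inequality is established for $r'$). The case $r=\infty$ is handled separately by replacing sums with suprema: the key observation is that $\sup_{k\ge j}2^{sj}|b_k|\le\sup_{k\ge j}2^{sk}|b_k|$, after which a direct computation or the $r<\infty$ case applied to a sufficiently large but finite $r$ yields the bound.

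With $r\le q<\infty$ I would invoke Minkowski's inequality for the $\ell_{q/r}$-norm (whose exponent is $\ge1$), obtaining
\begin{equation*}
  \Bigl\|\Bigl(\sum_{l\ge 0}2^{-srl}|a_{j+l}|^r\Bigr)^{1/r}\Bigr\|_{\ell_q(j)}
  =\Bigl\|\sum_{l\ge 0}2^{-srl}|a_{j+l}|^r\Bigr\|_{\ell_{q/r}(j)}^{1/r}
  \le\Bigl(\sum_{l\ge 0}2^{-srl}\bigl\||a_{j+l}|^r\bigr\|_{\ell_{q/r}(j)}\Bigr)^{1/r}.
\end{equation*}
Since $\||a_{j+l}|^r\|_{\ell_{q/r}(j)}=\|a\|_{\ell_q}^r$ is independent of $l$ (by translation invariance of $\ell_q$), and the geometric series $\sum_{l\ge0}2^{-srl}$ converges to $(1-2^{-sr})^{-1}$ because $s,r>0$, the right-hand side equals $(1-2^{-sr})^{-1/r}\|a\|_{\ell_q}$, which is the asserted estimate with $c=(1-2^{-sr})^{-1/r}$. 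The case $q=\infty$ follows by taking suprema rather than $\ell_{q/r}$-norms in the computation above, since $|a_{j+l}|\le\|a\|_{\ell_\infty}$ uniformly in $j$.

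The argument for \eqref{lsneg-ineq} is identical after replacing $l=k-j$ with $l=j-k$; the sum is finite ($0\le l\le j$) but the same geometric series provides the bound. The main obstacle, if any, is simply keeping the boundary cases $q,r\in\{0,\infty\}$ organised; the $\ell_{r'}$-embedding trick to reduce to $r\le q$ is what makes the Minkowski step go through uniformly.
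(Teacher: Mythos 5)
Your proof is correct. For reference, the paper does not actually spell out a proof of Lemma~\ref{Y-lem}: it only remarks that the $r=1$ case is \cite[Lem.~3.8]{Y1} and the general case follows as in \cite[Lem.~2.5]{JJ96ell}. Your self-contained argument (the substitution $a_k=2^{\pm sk}b_k$ to turn the weight into a convolution with a geometric kernel, the reduction to $r\le q$ via $\ell_{r'}\imb\ell_r$, and Minkowski's inequality in $\ell_{q/r}$) is the standard route for discrete Hardy-type inequalities and is in the same spirit as the cited references, so it fills in exactly what the paper omits. One small imprecision worth flagging: after the one-sided shift $j\mapsto j+l$ on $\N_0$ one has
\begin{equation*}
  \bigl\||a_{j+l}|^r\bigr\|_{\ell_{q/r}(j)}
  =\Bigl(\sum_{m\ge l}|a_m|^q\Bigr)^{r/q}\le\|a\|_{\ell_q}^r,
\end{equation*}
which is \emph{not} literally independent of $l$ (that would require summing over $j\in\Z$), but it is uniformly bounded by $\|a\|_{\ell_q}^r$, which is all that the argument needs; the same remark applies to the truncated shift $j\mapsto j-l$ used for \eqref{lsneg-ineq}. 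With that correction read in, the proof is complete and yields the explicit constant $c=(1-2^{-s\min(r,q)})^{-1/\min(r,q)}$.
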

For $r=1$ this lemma is equivalent to \cite[Lem.~3.8]{Y1}; in general it may
be proved in a similar fashion as noted in \cite[Lem.~2.5]{JJ96ell}.

\subsubsection{The right-inverse $K_1$} 
Note first that $\varphi_j(\xi''):=\Phi_j(0,\xi'')$
gives a Littlewood--Paley decomposition on $\Rnl$, 
so any $v\in \cs'(\Rnl)$ may be written $v=\sum v_j$ for
$v_j=\varphi_j(D)v$.

To construct $K_1$ we introduce an auxiliary function
$\cf\psi\in C_0^\infty(\R)$ such that $\psi(0)=1$ and
$\supp\cf\psi\subset[1,2]$.
Then $K_1$ can be defined as
\begin{equation}
  K_1v(x) = \sum_{j=0}^\infty \psi (2^{ja_1}x_1)v_j(x''),
  \label{K1-eq}
\end{equation}
for the series converges in $\cs'$ by Lemma~\ref{CF-lem}.
To verify this, note that
$\cf(\psi(2^{ja_1}\cdot)v_j)$ equals the product
$2^{-ja_1}\hat\psi(2^{-ja_1}\xi_1)\varphi_j(\xi'')\hat v(\xi'')$, 
where e.g.\ $1\le|2^{-a_1j}\xi_1|\le 2$ implies
$2^{a_1j}\le|\xi_1|\le2^{a_1(j+1)}$
and
\begin{equation}
  |\xi_1|^{1/a_1}\le |(\xi_1,\xi'')|_{\vec a}
  \le |(\xi_1,0)|_{\vec a}+|(0,\xi'')|_{\vec a}
 \le |\xi_1|^{1/a_1} +|\xi''|_{a''};
\end{equation}
this immediately give the inclusions, valid for $j\ge0$, 
\begin{equation}
  \supp \cf(\psi(2^{ja_1}\cdot)v_j)
  \subset \{\,\xi\mid 
            2^j\le |\xi|_{\vec a}\le4\cdot 2^j \,\}.
\end{equation}
Moreover, from $2^\circ$ in Lemma~\ref{CF-lem} the growth condition
\eqref{CF-cnd} follows at once. 
Hence $K_1$ is a well defined linear map $\cs'(\Rnl)\to\cs'(\Rn)$. 

Furthermore, 
$\Lambda\colon x_1\to \sum_{j=0}^\infty \psi(2^{ja_1}x_1)v_j(x'')$
is in the set $C_{\op{b}}(\R,\cs'(\Rnl))$ of continuous bounded maps
$\R\to\cs'(\Rnl)$.
In fact, the functions $\psi(2^{ja_1}\cdot)$ are uniformly bounded,
so that $g(x_1)=\sum \psi(2^{ja_1}x_1)\dual{v_j}{\varphi}$ by
\eqref{rapid-eq} converges to a continuous and bounded function on $\R$.
Hence $x_1\mapsto\dual{\Lambda(x_1)}{\varphi}$ has these properties, so
$\Lambda\in C_{\op{b}}(\R,\cs'(\Rnl))$.

For every $\eta\in \cs(\Rn)$ this implies the first identity in
\begin{equation}
  \begin{split}
  \dual{\Lambda}{\eta}&= 
  \int_{\R} \dual{\Lambda(x_1)}{\eta(x_1,\cdot)}_{\Rnl}\,dx_1
  = \int\sum_{j=0}^\infty\dual{\psi(2^{ja_1}x_1)v_j}{\eta(x_1,\cdot)}dx_1
\\
  &= \lim_{m\to\infty}\sum_{j=0}^m 
    \dual{\psi(2^{ja_1}\cdot)v_j}{\eta}_{\Rn} 
   =\dual{K_1v}{\eta}.
  \end{split}
  \label{K1L-eq}
\end{equation}
Here passage to the last line is justified with the following majorisation,
\begin{equation}
  \sup_{x_1}|\dual{v_j}{\eta(x_1,\cdot)}|\le C_N2^{-jN}(1+x_1^2)^{-1} ,
  \quad\text{for every $N>0$}
  \label{K1L-ineq}
\end{equation}
that follows analogously to \eqref{rapid-eq}, by taking for $\varphi$ in the
proof of \eqref{rapid-eq} a function like
$\varphi_t=(1+t^2)\eta(t,x)$ depending on a parameter $t$.

By the above formula 
$K_1v=\Lambda\in C(\R,\cs'(\Rnl))$, so since $\psi(0)=1$,
\begin{equation}
  \gamma_{0,1} K_1v=\Lambda(0)= \sum_{j=0}^\infty \psi(0)v_j =v
  \quad\text{for every $v\in \cs'(\Rnl)$}.
\end{equation}
That is, $K_1$ maps all of $\cs'(\Rnl)$ into the domain of $\gamma_{0,1}$,
for which it acts as a right-inverse.

Continuity of $K_1\colon \cs'(\Rnl)\to\cs'(\Rn)$ 
results by proving that there exists an everywhere defined linear map 
$K_1^*\colon \cs(\Rn)\to\cs(\Rnl)$ given by 
\begin{equation}
  K_1^*\eta(x'')=\sum_{j=0}^\infty \int_{\R} \psi(2^{ja_1}y_1)
  \int_{\Rnl}\cf^{-1}\varphi_{j}(y'')\eta(y_1,x''-y'')\,dy''dy_1.
  \label{K1*-eq}
\end{equation}
Indeed, using $K_1^*$ one arrives at the following formula, where the right
hand side depends continuously on $v\in\cs'(\Rnl)$,
\begin{equation}
  \dual{K_1v}{\overline{\eta }}=
  \dual{v}{\sum_{j=0}^\infty \big(\int_{\R}
    \psi(2^{ja_1}y_1)\cf^{-1}_{\xi''\to x''}
              (\varphi_j\cf_{x''\to\xi''}\eta)
  \,dy_1\big)^{\overline{\ }}}_{\Rnl}=
  \dual{v}{\overline{K_1^*v}}.
\end{equation}
As for \eqref{K1*-eq} it is noted that $\cs(\Rn)$ contains
\begin{equation}
  (\hat \psi(-\xi_1)\Phi_0(0,\xi'')+
  \sum_{j=1}^\infty  2^{-ja_1}\hat \psi(-2^{-ja_1}\xi_1)
  \Phi_1(0,2^{-(j-1)a''}\xi''))\cf\eta(\xi_1,\xi''),
\end{equation}
since this is a product of $\cf\eta\in \cs$ and a $C^\infty$-function
with bounded derivatives. Applying $\cf^{-1}$ and setting $x_1=0$,
it results that the right-hand side of \eqref{K1*-eq} is in $\cs(\Rnl)$ .

\subsubsection{Boundedness of $K_1$}
With $v\in F^{s-\frac{a_1}{p_1},a''}_{p'',p_1}(\Rnl)$, for $s\in\R$,
we obtain boundedness of $K_1$ by showing that the series defining 
$K_1v$ converges in $\fspq(\Rn)$.
For this it suffices  by Lemma~\ref{Fcor-lem} to show
\begin{equation}
  \Norm{\sum_{j=0}^\infty \psi(2^{ja_1}x_1)v_j(x'')}{L_{\vec p}(\ell^s_q)}
  \le c\norm{v}{F^{s-\frac{a_1}{p_1},a''}_{p'',p_1}}.
  \label{K1FF-ineq}
\end{equation}
By embeddings this may be reduced to the case $q<p_1$. For the integral
\begin{equation}
  I(x''):=\int_{\R}
   (\sum_{j=0}^\infty |2^{sj}\psi(2^{ja_1}x_1)
                             v_j(x'')|^q)^{\tfrac{p_1}{q}}\,dx_1
\end{equation}
we take $N>\frac1{p_1}$ so that
$  |\psi(2^{ja_1}x_1)|\le
|2^{ja_1}x_1|^{-N}\sup_{\R}t^N|\psi(t)|$ for $x_1\ne 0$.
Then, if $I_1$ and $I_0$ denote the integrals over $|x_1|>1$ and
$|x_1|\le1$, respectively, 
\begin{equation}
  \begin{split}
  I_1&\le \int_{|x_1|>1} 
       (\sum_{j=0}^\infty |2^{sj}v_j(x'')|^q2^{-Na_1jq}c_\psi)^{\tfrac{p_1}{q}}
       x_1^{-Np_1}\,dx_1 
\\
  &\le c_1(1-2^{(\frac1{p_1}-N)a_1q})^{-\tfrac{p_1}{q}}
    (\sup_{j}2^{(s-\frac{a_1}{p_1})j}|v_j(x'')|)^{p_1}.
  \end{split}
\end{equation}
By splitting the integration area for $I_0$ into intervals 
with $2^{-(k+1)a_1}\le|x_1|\le 2^{-ka_1}$,
that are of length $(2-2^{1-a_1})2^{-ka_1}$,
and by using the choice of $N$ for $j>k$,
\be
  I_0 \le \sum_{k=0}^\infty c2^{-ka_1}(
        \sum_{j=0}^k |2^{sj}v_j(x'')|^q\nrm{\psi}{\infty}^q
        +\sum_{j=k+1}^\infty
        |v_j(x'')2^{(s-Na_1)j+N(k+1)a_1}c(\psi)|^q
        )^\frac{p_1}q .
\ee
At the cost of a factor of $2^{\frac{p_1}q}$ the two terms may be treated
separately, so
\be %\begin{multline}
  I_0 \le c_2\sum_{k=0}^\infty 2^{-ka_1}(
        \sum_{j=0}^k |2^{sj}v_j(x'')|^q)^{\frac{p_1}q}
%\\
        +c_3 \sum_{k=0}^\infty 2^{k(Na_1-\frac{a_1}{p_1})p_1}
        (\sum_{j=k}^\infty
        |v_j(x'')2^{(s-Na_1)j}|^q
        )^\frac{p_1}q .
\ee %\end{multline}
According to Lemma~\ref{Y-lem}, the $\ell_q$-norms over $j$ may be
``cancelled'' since the weights have 
bases $2^{-a_1}<1$ and $2^{(N-\frac1{p_1})a_1p_1}>1$, respectively, so
\begin{equation}
  I_0\le (c_2+c_3)\norm{2^{(s-\frac{a_1}{p_1})j}v_j(x'')}{\ell_{p_1}}^{p_1}.
\end{equation}
Altogether 
$I(x'')\le c_4 \norm{2^{(s-\frac{a_1}{p_1})j}v_j(x'')}{\ell_{p_1}}^{p_1}$,
so by continued calculation of the $L_{\vec p}\,$-norm, 
\eqref{K1FF-ineq} follows. 
Therefore $K_1$ is bounded 
$ F^{s-\frac{a_1}{p_1},a''}_{p'',p_1}(\Rnl)\to \fspq(\Rn)$ 
for all $s\in \R$, $q>0$.

\subsubsection{The extension operator $K_n$} This is in analogy with 
$K_1$ taken as
\begin{equation}
    K_nv(x) = \sum_{j=0}^\infty \psi (2^{ja_n}x_n)v_j(x').
  \label{Kn-eq}
\end{equation}
By Lemma~\ref{CF-lem}, this is also meaningful in $\cs'$, and the above
discussion, mutatis mutandis, gives that 
$K_n$ is a right-inverse of $\gamma_{0,n}$.

To show that $K_n$ is bounded from $B^{s-\frac{a_n}{p_n},a'}_{p',p_n}(\Rnl)$
to $\fspq(\Rn)$ for all $q\in \,]0,\infty]$, we may assume that
$q<\min(p_1,\dots,p_n)$. For $v$ belonging to the former space, we set
\begin{equation}
    I:=\int_{\R}
    \Norm{
    (\smash{
     \sum_{j=0}^\infty|2^{sj}\psi(2^{ja_n}x_n)v_j(\cdot)|^q)^{\tfrac1{q}}
    }}{L_{p'}}^{p_n}
   \,dx_n.
\end{equation}
For the integral $I_1$ over
$|x_n|\ge1$, one can use an $N>\tfrac{1}{p_n}$ (but otherwise as above) 
together with the triangle inequality for the mixed-norm  with exponent
$\frac1q p'=(\tfrac{p_1}{q},\dots,\tfrac{p_{n-1}}{q})$ to obtain that
\begin{equation}
    I_1\le\int_{|x_n|\ge 1}
    \smash{
    (\sum_{j=0}^\infty
    \norm{2^{sjq}|v_j|^q }{L_{\tfrac{1}{q}p'}}
    c_\psi2^{-Na_njq}
    )^{\frac{p_n}{q}}
    }
    x_n^{-Np_n}\,dx_n.
\end{equation}
Since $q<p_n$, H{\"o}lder's inequality gives
$I_1\le c\norm{v}{B^{s-\frac{a_n}{p_n},a'}_{p',p_n}(\Rnl)}^{p_n}$.

Correspondingly $I_0$ is split into regions with $2^{-(k+1)a_n}\le
|x_n|\le 2^{-ka_n}$ and this yields, cf.\ the case for $K_1$ above,
\begin{multline}
  I_0 \le c_1\sum_{k=0}^\infty 2^{-ka_n}(
        \sum_{j=0}^k \norm{2^{sjq}|v_j|^q}{L_{\frac1q p'}})^{\frac{p_n}q}
\\
        +c_2 \sum_{k=0}^\infty 2^{k(Na_n-\frac{a_n}{p_n})p_n}
        (\sum_{j=k}^\infty
        \norm{|v_j|^q}{L_{\frac1q p'}}2^{(s-Na_n)jq}
        )^\frac{p_n}q .
\end{multline}
By passing to the $L_{p'}$-norms and applying Lemma~\ref{Y-lem}, one can
get rid of the sums over $j\lesseqgtr k$, hence
$I\le c \norm{v}{B^{s-\frac{a_n}{p_n},a'}_{p',p_n}}^{p_n}$. This shows 
that $K_n$ is continuous 
$B^{s-\frac{a_n}{p_n},a'}_{p',p_n}(\Rnl)\to \fspq(\Rn)$ 
for $0<q\le\infty$, any $s\in \R$.

\begin{rem}
  \label{Tr-rem}
Our treatment of $K_1$ and $K_n$ 
was inspired by the isotropic estimates in \cite[Thm.~2.7.2]{T2}.
We have preferred to use Lemma~\ref{Y-lem} and the dyadic corona criterion, 
that also give that the $K_m$ map all of 
$\cs'(\Rnl)$ into the domain of $\gamma_{0,m}$.
The continuity $K_m\colon \cs'(\Rnl)\to\cs'(\Rn)$ followed from the
existence of an adjoint $K_m^*\colon \cs(\Rn)\to\cs(\Rnl)$.
\end{rem}

\subsection{On Corollaries~\ref{gj1-cor}--\ref{gjn-cor}}
As noted prior to the corollaries, boundedness follows directly from the
other results.
But surjectivity of $\rho_{m,k}$
is conveniently established here, by means of some modifications
of the right-inverses $K_1$, $K_n$. Details will be given for $k=1$;
to simplify notation, we treat $\rho_{m+1,1}$, so the 
trace of highest order is $\gamma_{m,1}$. 

The auxiliary function $\psi\in
\cfi C^\infty_0(\,]1,2[\,)$ with $\psi(0)=1$ can be taken such that also
$\psi'(0)=\dots=\psi^{(m)}(0)=0$. Indeed, we may arrange that
$\cf\psi(\xi_1)$ is orthogonal in $L_2(\,]1,2[\,)$ to 
$W_m:=\op{span}(\xi_1,\dots,\xi_1^m)$.
(It is well known that if a Hilbert space $H$ has a dense subspace $U$, it
holds for every subspace $W_m$ of dimension $m\in \N$ that 
$U\bigcap W_m^\perp$ is dense in the orthogonal complement $W_m^\perp$
(induction w.r.t.~$m$). In our case $f(\xi_1)\equiv1$ has projection 
$g\ne0$ onto $W_m^\perp$, so the density implies the existence of
$\phi\in C^\infty_0(\,]1,2[\,)\bigcap W_m^\perp$ such that 
$0\ne \int_1^2 \phi\overline{g}\,d\xi_1=\int_1^2\phi\overline{f}\,d\xi_1=
\int_1^2\phi\,d\xi_1=:c$.
Then we can take
$\psi=\tfrac{2\pi}{c}\cfi\phi$.)

Setting $\psi_k(x_1)=(k!)^{-1}x_1^k\psi(x_1)$ for 
$k\le m$, we have $\gamma_{j,1}\psi_k=
(\gamma_{j,1}x_1^k)\psi(0)/k!=\delta_{jk}$ (Kronecker delta). 
Using $\psi_\nu$, we let
\begin{equation}
 K_{\nu,1}v(x)=\sum_{j=0}^\infty 2^{-ja_1\nu}\psi_\nu(2^{ja_1}x_1)v_j(x'')
  \quad\text{for}\quad  \nu=0,1,\dots,m.
\end{equation} 
It holds that $K_{\nu,1}v$ is in $C(\R,\cs'(\Rnl))$ and $K_{\nu,1}$ is
continuous $\cs'(\Rnl)\to\cs'(\Rn)$,
for the arguments for $K_1$ apply verbatim, as $\psi_\nu$ amounts to a
special choice of $\psi$.
Moreover, since $\partial^\nu_{1}$ is $\cs'$-continuous, it applies
termwisely, which cancels the factor $2^{-ja_1\nu}$ and shows that
$\partial^\nu_{1}K_{\nu,1}v$ is in $C(\R,\cs'(\Rnl))$; i.e.\ 
$K_{\nu,1}$ maps into the domain of $\gamma_{\nu,1}$.
Incorporation of the factor $2^{-ja_1\nu}$ into the $K_1$-estimates 
yield continuity of 
$K_{\nu,1}\colon F^{s-\nu a_1-\frac{a_1}{p_1},a''}_{p'',p_1}\to
\fspq$ for all $s\in\R$, $0<q\le\infty$.

Finally $K^{(m+1)}_1=\left(\begin{matrix}
K_{0,1}& \dots& K_{m,1}\end{matrix}\right)$ maps $\cs'(\Rnl)^{m+1}$ into the
domain of $\rho_{m+1,1}$ and
fulfils $\rho_{m+1,1}\circ K^{(m+1)}_1=I$, 
since $\gamma_{k,1}K_{\nu,1}v=\delta_{k\nu} v$; 
and $K^{(m+1)}_1$ is continuous
with respect to the spaces in Corollary~\ref{gj1-cor}.

\subsection{Proof of Theorem~\ref{main1-thm}}
Note first that \eqref{cd'-cnd}$\implies$\eqref{spq-cnd} is the special case
$m=1$ of Lemma~\ref{spq-lem}, proved above.

For brevity we use the following notation for maximal functions invoking the
Littlewood--Paley decomposition,
\begin{equation}
  u^*_j(\vec t;x) =\sup_{y\in \Rn} 
   |\Phi_j(D)u(x-y)|\prod_{k=1,\dots,n} (1+|2^{ja_k}y_k|^{\frac1{t_k}})^{-1}.
\end{equation}
This applies via the estimate in Proposition~\ref{maximalf}, 
so it is once and for all assumed that $\vec t$ is chosen so that
$t_j<\min(p_1,\dots p_j,q)$ for all $j\ge1$.

\subsubsection{The basic mixed-norm estimates}
To see that \eqref{spq-cnd}$\implies$\eqref{cd'-cnd}, let 
$u\in F^{s,\vec a}_{\vec p,q}(\Rn)$ with 
$u=\sum_{j=0}^\infty u_j$ for $u_j=\Phi_j(D)u$, and let 
$\vec t$ be chosen as above. Then
\begin{equation}
  |u_j(0,x'')| \le c_1
                \frac{u_j(x_1-y_1,x'')}{1+|2^{ja_1}y_1|^{\frac1{t_1}}}
                \bigm|_{{y_1=x_1}}
 \le c_1 u^*_j(\vec t;(x_1,x'')),
  \label{uj*-ineq}
\end{equation}
since $1+|2^{ja_1}x_1|^{\frac1{t_1}}\le
1+2^{\frac{a_1}{t_1}}=:c_1$ for $x_1\in [2^{-ja_1},2^{(1-j)a_1}]$.
Next an integration yields
\begin{equation}
  (2^{a_1}-1)2^{-ja_1}|u_j(0,x'')|^{p_1}\le c_1^{p_1}
  \int_{2^{-ja_1}}^{2^{(1-j)a_1}} |u^*_j(\vec t;x)|^{p_1}\,dx_1,
  \label{uj*int-eq}
\end{equation}
so after multiplication by $2^{sjp_1}$ and estimation by $\sup_{k}2^{sk}
|u^*_k(\vec t;x)|$ in the integral, a summation yields
\begin{equation}
  \sum_{j=0}^\infty 2^{(s-\frac{a_1}{p_1})jp_1}
  |u_j(0,x'')|^{p_1}
  \le c'_1 
  \int_{\R} (\sup_k 2^{sk}|u^*_k(\vec t;x)|)^{p_1}\,dx_1.
\end{equation}
Then Proposition~\ref{maximalf} gives, since 
$\fspq\imb F^{s,\vec a}_{\vec p,\infty}$,
\begin{equation}
  \Norm{ (\sum_{j=0}^\infty |2^{(s-\frac{a_1}{p_1})j}
  u_j(0,x'')|^{p_1})^{\frac1{p_1}}}{L_{p''}}
  \le c_1''\norm{u}{F^{s,\vec a}_{\vec p,q}}.
  \label{basic-ineq}
\end{equation}
Moreover, by summing only over $j$
between $N+1$ and $N+m$ (and by applying the first part of
\eqref{maximalf-ineq} 
to a sequence of functions that vanish except for those $j$), one gets a
sharper conclusion, with 
$\chi_{N}$ as the characteristic function of $\,]0,2^{-Na_1}]$ 
and $v(x):=\sup_k 2^{sk}|u^*_k(x_1,x'')|$ 
for brevity, 
\begin{equation}
  \Norm{ (\sum_{j=N+1}^{N+m} |2^{(s-\frac{a_1}{p_1})j}
  u_j(0,\cdot)|^{p_1})^{\frac1{p_1}}}{L_{p''}}
  \le c_1''\Norm{ \chi_N(x_1)v(x)}{L_{\vec p}(\Rn)}
  \searrow 0.
  \label{basicN-ineq}
\end{equation}
The behaviour for $N\to\infty$ follows by
majorised convergence (with $v(\cdot,x'')$ as the first
majorant), since $c$ is independent of $N$.

For $s=\frac{a_1}{p_1}+\sum_{k>1}(\frac{a_k}{p_k}-a_k)_+$ 
we set 
$r_k=\max(1,p_k)$  so that
\begin{equation}
  s-\frac{a_1}{p_1}=\sum_{k>1>p_k} (\frac{a_k}{p_k}-a_k) 
  =\sum_{k>1} (\frac{a_k}{p_k}-\frac{a_k}{r_k}) =:\sigma.
  \label{sigma-eq}
\end{equation}
We continue in the same way for $\sigma>0$ and for $\sigma=0$.
The vector-valued
Nikol$'$skij inequality on $\Rnl$, cf.~Theorem~\ref{vNPP-thm},
then implies
\begin{equation}
  \begin{split}
  \Norm{\sum_{j=N+1}^{N+m} u_j(0,\cdot)}{L_{r''}}
  &\le
    \Norm{\sum_{j=N+1}^{N+m} |u_j(0,\cdot)|}{L_{r''}}
\\
  &\le
  c_{r''}\Norm{ (\sum_{j=N+1}^{N+m} |2^{j\sigma}
  u_j(0,\cdot)|^{p_1})^{\frac1{p_1}}}{L_{p''}}
  \le 
  c_{r''}c_1''\norm{\chi_N(x_1)v(x)}{L_{\vec p}}.
  \end{split}
\end{equation}
Consequently $\sum u_j(0,x'')$ converges in the Banach space
$L_{r''}(\Rnl)\imb \cs'(\Rnl)$ in all the borderline cases. 
(For $p_1\le1$ this can also be seen more directly, 
using that $\ell_{p_1}\imb\ell_1$ instead of the Nikol$'$skij inequality.)
By similar inequalities
now with summation over $j\in\N_0$, it is in both cases seen from
\eqref{basic-ineq}
that $\gamma_{0,1}$ is bounded $\fspq\to L_{r''}$.

The generic cases given by the sharp inequality 
$s>\frac{a_1}{p_1}+\sum_{k>1}(\frac{a_k}{p_k}-a_k)_+$
also give the desired $\cd'$-continuity, as seen by restricting 
$\gamma_{0,1}$ to subspaces with higher values of $s$.   

\subsubsection{Continuity in $x_1$}
To show that $\fspq(\Rn)\imb C_{\op{b}}(\R,L_{r''}(\Rnl))$ it is,
by a simple embedding lowering $s$, enough to treat
the case $s=\frac{a_1}{p_1}+\sigma$; cf~\eqref{sigma-eq}.
We may assume $q<\infty$, by passing to a larger space 
by means of a Sobolev embedding increasing 
a component of $p''$.

To evaluate at $x_1=z$ for an arbitrary $z$ one can extend the above estimates.
Indeed, letting $x_1$ run in $[z+2^{-ja_1},z+2^{(1-j)a_1}]$, 
and replacing $y_1$ by $y_1-z$, one finds \eqref{uj*int-eq} 
with an integral over this interval
(with the same constant).

This procedure gives the strengthened estimate
\begin{equation}
  \sup_z\Norm{\sum_{j=0}^{\infty} u_j(z,\cdot)}{L_{r''}}
  \le
  c_{r''}\sup_z\Norm{ (\sum_{j=0}^{\infty} |2^{j\sigma}
  u_j(z,\cdot)|^{p_1})^{\frac1{p_1}}}{L_{p''}}
 \le 
  c_{r''}c_1''\norm{u}{\fspq}.
\label{supz-eq}
\end{equation}
Redefining $u_j$ to $0$ for $j\notin [N+1,N+m]$, as before, 
this gives convergence of the series for every $z$, hence 
a function $z\mapsto f(z)=\sum u_j(z,\cdot)$, and \eqref{supz-eq} shows
it is bounded $\R\to L_{r''}$.

The continuity of $f$ follows because translations $\tau_h u\to u$ in 
$\fspq$ for $h\to 0$, since $q$ is finite; cf.\ Proposition~\ref{FBtrans-prop}.
Indeed, inserting $\tau_hu-u$ in \eqref{supz-eq},
\begin{equation}
  \norm{f(z-h)-f(z)}{L_{r''}}\le c\Norm{\tau_hu -u}{\fspq}\searrow 0.
\end{equation}

To show that $\Lambda_f=u$, note first that by \eqref{supz-eq} there is 
an estimate uniformly over a compact interval containing every $z$ 
appearing in $\supp\varphi$,
\begin{equation}
  |\dual{\sum_{j=0}^N u_j(z,\cdot)}{\varphi(z,\cdot)}_{\Rnl}|
  \le c\sup_z\norm{\varphi}{(L_{r''})^*}\norm{u}{\fspq}.
\end{equation}
With this as a majorisation, 
\begin{equation}
%  \begin{split}
  \dual{\Lambda_f}{\overline{\varphi}}
=
  \int_\R \sum_{j=0}^\infty\dual{ u_j(z,\cdot)}{
        \overline{\varphi(z,\cdot)}}_{\Rnl}\,dz
%\\
  =\sum_{j=0}^\infty \iint u_j\overline{\varphi}\,dx''dz
  =\sum_{j=0}^\infty \dual{u}{\overline{\varphi_j}}
  =\dual{u}{\varphi}.
%  \end{split}
\end{equation}
Thence $u=\Lambda_f\in C_{\op{b}}(\R,L_{r''}(\Rnl))$ as desired.

\subsection{Boundedness in the $F$-scale (Theorem~\ref{main1-thm'})}
Departing from the proof of Theorem~\ref{main1-thm},
note that in the subspaces where $s>\frac{a_1}{p_1}+\sum_{k>1}
(\frac {a_k}{\min(1,p_2,\dots,p_k,q)}-a_k)$, 
the dyadic corona criterion applies, because $u_j(0,x'')$ by the
Paley--Wiener--Schwartz Theorem has its 
spectrum where $|\xi''|_{a''}\le 2^{j+1}$; 
cf.\ \cite[Rem.~3.4]{JJ00bsp}.
Therefore \eqref{basic-ineq} implies
\begin{equation}
  \Norm{\sum u_j(0,x'')}{F^{s-\frac{a_1}{p_1},a''}_{p'',p_1}}
  c\le \norm{u}{\fspq}.
\end{equation}
The surjectivity follows from the already proved Theorem~\ref{K-thm},
in view of the formula $\gamma_{0,1}\circ K_1v=v$, proved for all $v\in
\cs'(\Rnl)$, and the mapping properties of $K_1$. 

\subsection{Proof of Theorems~\ref{mainn-thm}, \ref{mainn-thm'}}
The implications of \eqref{cd'-cnd'} were accounted for directly after the
theorems by means of  Lemma~\ref{spq-lem}.

For the proof of \eqref{spq-cnd'}$\implies$\eqref{cd'-cnd} 
the argument from Theorem~\ref{main1-thm} applies, mutatis mutandis. 
Indeed, as in \eqref{uj*-ineq} one finds
$  |u_j(x',z)| \le c'_1 u^*_j(\vec t;(x',x_n))$ for a constant $c_1'$ 
independent of $z$; then one can take the
$L_{p'}(\Rnl)$-norm on both sides and
proceed with the argument for 
\eqref{uj*int-eq}--\eqref{basic-ineq}. Setting $r_k=\max(1,p_k)$ for $k<n$
and $\sigma=\sum_{k<n}(\frac{a_k}{p_k}-\frac{a_k}{r_k})$, this gives
for $s=\frac{a_n}{p_n}+\sigma$ and $p_n\le1$, when
the Nikol$'$skij inequality is applied for each $j\ge0$, 
\begin{equation}
  \sup_z(\sum \norm{u_j(\cdot,z)}{L_{r'}}^{p_n})^{\frac1{p_n}}
  \le
  c'\sup_z (\sum 2^{j\sigma p_n}
    \norm{u_j(\cdot,z)}{L_{p'}}^{p_n})^{\frac1{p_n}}
  \le c''\norm{u}{F^{s,\vec a}_{\vec p,q}}.    
  \label{basic-ineq'}
\end{equation}
Now 
$\norm{\cdot}{\ell_1}\le \norm{\cdot}{\ell_{p_n}}$ 
gives a finite norm series, hence 
convergence of $\sum_{j=0}^\infty u_j(\cdot,z)$ to some $f(z)$ in the 
Banach space $C_{\op{b}}(\R,L_{r'}(\Rnl))$. Clearly $\sup_z\norm{f(z)}{L_{r'}}
\le c''\norm{u}{\fspq}$.

As for $\gamma_{0,1}$
there is an identification $\Lambda_f=u$,
which yields $\fspq(\Rn)\imb C_{\op{b}}(\R,L_{r'}(\Rnl))$. 
In particular the working definition of 
$\gamma_{0,n}u$ is defined by evaluation at $z=0$.

In cases with $s=\varepsilon+\frac{a_n}{p_n}+\sigma$ for $\varepsilon>0$, 
the inequality \eqref{basic-ineq'} 
is modified by having on its left-hand side a norm in
$\ell^\varepsilon_{p_n}$. But since
$\norm{\cdot}{\ell_1}\le \norm{\cdot}{\ell^\varepsilon_{p_n}}$ whenever
$0<p_n<\infty$, the inclusion into $C_{\op{b}}(\R,L_{r'})$ is seen in the same way.
Altogether \eqref{spq-cnd'}$\implies$\eqref{cd'-cnd} holds in all cases.

\bigskip

When \eqref{spqn-cnd} holds, the dyadic ball
criterion for Besov spaces, cf.\ Lemma~\ref{B-lem}, applies
yielding continuity $F^{s,\vec a}_{\vec p,q}(\Rn)\to
B^{s-\frac{a_n}{p_n},a'}_{p',p_n}(\Rnl)$; here the surjectivity is a
consequence of the formula $\gamma_{0,n}\circ K_n=I$.
This completes the proof of Theorem~\ref{mainn-thm}.

%%%%%%%%%%%%%%%%%%%%%%%%%%%%%%%%%%%%%%%%%%%%%
%% Section 6
%%%%%%%%%%%%%%%%%%%%%%%%%%%%%%%%%%%%%%%%%%%%%
\section{Final Remarks}
  \label{finl-sect}

To conclude, we note that also 
if we specialise to $\vec a=(1,\dots,1)$ and $\vec
p=(p,\dots,p)$, our results on the right-inverses
$K_j$ ($j=1$ and $j=n$) 
supplement those previously available, say in \cite[2.7.2]{T2},
since the $K_j$ are shown above to be well-defined continuous maps 
$\cs'(\R^{n-1})\to\cs'(\Rn)$.
Moreover, we show that all of $\cs'(\Rnl)$ is mapped into
the domain of $\gamma_{0,j}$, i.e. into $C(\R,\cd'(\Rnl))$.
This makes sense because we consider the distributional trace.

We also estimate the norms in $C_{\op{b}}(\R,L_{r''})$
etc.\ directly in terms of the $\fspq$-norms. 

Already Berkolaiko gave specific counterexamples
for the trace problem of mixed-norm spaces with $1<\vec p<\infty$. 
Our counterexamples show 
the necessity of raising the borderlines upwards when $0<p_k<1$ holds 
for one of the tangential variables $x_k$.

It should also be mentioned that we have a fairly complete theory, 
carrying over most of the well-known results for isotropic spaces to
the quasi-homogeneous mixed-norm spaces $\fspq$. In particular, 
for fixed $\vec p$, we cover all $s$ running in a maximal
open half-line. (However, 
traces of $\bspq$ were not described, although we do not envisage any 
difficulties in doing so with the methods of the present paper.)

The works on parabolic problems with traces of mixed-norm spaces
\cite{DeHiPr,Wei05} have for the lateral boundary data used spaces
that are intersections of $F^{2-1/q}_{p,q}(\,]0,T[\,;
L_q(\partial\Omega))$
and $L_p(\,]0,T[\,; W^2_q(\Omega))$; also vector-valued solutions 
have been treated. 
We have left both questions (identifications of $\fspq$ spaces with 
intersections and vector-valued versions) for the future.

\subsection*{Acknowledgement}
The authors are grateful to Mathematisches Forschungsinstitut Oberwolfach
for a two weeks stay in the spring of 2006 
under the programme Research in pairs.

%
%\bibliography{tlfinalrefs}
%\bibliographystyle{amsalpha}
%

\def\cprime{$'$}
\providecommand{\bysame}{\leavevmode\hbox to3em{\hrulefill}\thinspace}

\end{document}